\definecolor{darkergreen}{rgb}{0.0, 0.5, 0.0}
\numberwithin{equation}{section}
\def\theequation{\arabic{section}.\arabic{equation}}
\newcommand{\be}{\begin{eqnarray}}
\newcommand{\ee}{\end{eqnarray}}
\newcommand{\ce}{\begin{eqnarray*}}
	\newcommand{\de}{\end{eqnarray*}}
\newtheorem{theorem}{Theorem}[section]
\newtheorem{lemma}[theorem]{Lemma}
\newtheorem{proposition}[theorem]{Proposition}
\newtheorem{Examples}[theorem]{Example}
\newtheorem{corollary}[theorem]{Corollary}
\newtheorem{definition}[theorem]{Definition}
\theoremstyle{definition}
\newtheorem{remark}[theorem]{Remark}
\newcommand{\mathd}{\mathrm{d}}
\newcommand{\tmcolor}[2]{{\color{#1}{#2}}}
\newcommand{\tmmathbf}[1]{\ensuremath{\mathbf{#1}}}
\newcommand{\tmop}[1]{\ensuremath{\operatorname{#1}}}
\def\eps{\varepsilon}
\def\p{\partial}
\def\<{{\langle}}
\def\>{{\rangle}}
\def\({{\Big(}}
\def\){{\Big)}}
\def\bx{{\mathbf{x}}}
\def\dif{{\mathord{{\rm d}}}}
\def\={&\!\!=\!\!&}
\def\cF{{\mathcal F}}
\def\cI{{\mathcal I}}
\def\mN{{\mathbb N}}
\def\mP{{\mathbb P}}
\def\mR{{\mathbb R}}
\def\1{{\mathbf{1}}}
\def\E{\mathbf E}
\def\geq{\geqslant}
\def\leq{\leqslant}
\def\div{\mathord{{\rm div}}}
\def\eps{\varepsilon}
\def\p{\partial}
\def\<{{\langle}}
\def\>{{\rangle}}
\def\({{\Big(}}
\def\){{\Big)}}
\def\bx{{\mathbf{x}}}
\def\dif{{\mathord{{\rm d}}}}
\def\={&\!\!=\!\!&}
\def\bt{\begin{theorem}}
	\def\et{\end{theorem}}
\def\bl{\begin{lemma}}
	\def\el{\end{lemma}}
\def\br{\begin{remark}}
	\def\er{\end{remark}}
\def\bx{\begin{Examples}}
	\def\ex{\end{Examples}}
\def\bd{\begin{definition}}
	\def\ed{\end{definition}}
\def\bp{\begin{proposition}}
	\def\ep{\end{proposition}}
\def\bc{\begin{corollary}}
	\def\ec{\end{corollary}}
\def\geq{\geqslant}
\def\leq{\leqslant}
\def\div{\mathord{{\rm div}}}
\def\Id{\textrm{Id}}
 \def\R{\mathbb R}
 \def\R{\mathbb R}    
\def\N{\mathbb N}  
\def\<{\langle} \def\>{\rangle}
\newcommand{\oprec}{\hspace{.09cm}\Circle\hspace{-.395cm}\prec}
\newcommand{\osucc}{\hspace{.09cm}\Circle\hspace{-.35cm}\succ}
\newcommand{\opreccurlyeq}{\hspace{.09cm}\Circle\hspace{-.4cm}\preccurlyeq\hspace{.1cm}}
\newcommand{\osucccurlyeq}{\hspace{.09cm}\Circle\hspace{-.35cm}\succcurlyeq\hspace{.05cm}}
\def\drawx{\draw[-,solid] (-3pt,-3pt) -- (3pt,3pt);\draw[-,solid] (-3pt,3pt) -- (3pt,-3pt);}
\tikzset{
	root/.style={circle, draw=black, fill=white, inner sep=0pt, minimum size=0.7mm},
	dot/.style={circle,fill=black,draw=black, solid,inner sep=0pt,minimum size=0.5mm},
	square/.style={rectangle,fill=black,draw=black, solid,inner sep=0pt,minimum size=1mm},
	empty/.style={circle,fill=white,draw=white, solid,inner sep=0pt,minimum size=0.5mm},
	var/.style={circle,fill=black!10,draw=black,inner sep=0pt, minimum size=
		2mm},
	symb/.style={circle,fill=symbols,draw=symbols, solid,inner sep=0pt,minimum size=0.5mm},
	yy/.style={circle,fill=gray!20,draw=black,inner sep=0pt,minimum size=0.8mm},
	>=stealth,
	dotred/.style={circle,fill=black!50,inner sep=0pt, minimum size=2mm},
	generic/.style={semithick,shorten >=1pt,shorten <=1pt},
	dist/.style={ultra thick,draw=testcolor,shorten >=1pt,shorten <=1pt},
	testfcn/.style={ultra thick,testcolor,shorten >=1pt,shorten <=1pt,<-},
	testfcnx/.style={ultra thick,testcolor,shorten >=1pt,shorten <=1pt,<-,
		postaction={decorate,decoration={markings,mark=at position 0.6 with {\drawx}}}},
	kprime/.style={semithick,shorten >=1pt,shorten <=1pt,densely dashed,->},
	kprimex/.style={semithick,shorten >=1pt,shorten <=1pt,densely dashed,->,
		postaction={decorate,decoration={markings,mark=at position 0.4 with {\drawx}}}},
	kernel/.style={semithick,shorten >=1pt,shorten <=1pt,->},
	multx/.style={shorten >=1pt,shorten <=1pt,
		postaction={decorate,decoration={markings,mark=at position 0.5 with {\drawx}}}},
	kernelx/.style={semithick,shorten >=1pt,shorten <=1pt,->,
		postaction={decorate,decoration={markings,mark=at position 0.4 with {\drawx}}}},
	kernel1/.style={->,semithick,shorten >=1pt,shorten <=1pt,postaction={decorate,decoration={markings,mark=at position 0.45 with {\draw[-] (0,-0.1) -- (0,0.1);}}}},
	kernel2/.style={->,semithick,shorten >=1pt,shorten <=1pt,postaction={decorate,decoration={markings,mark=at position 0.45 with {\draw[-] (0.05,-0.1) -- (0.05,0.1);\draw[-] (-0.05,-0.1) -- (-0.05,0.1);}}}},
	kernelBig/.style={semithick,shorten >=1pt,shorten <=1pt,decorate, decoration={zigzag,amplitude=1.5pt,segment length = 3pt,pre length=2pt,post length=2pt}},
	rho/.style={dotted,semithick,shorten >=1pt,shorten <=1pt},
	renorm/.style={shape=circle,fill=white,inner sep=1pt},
	labl/.style={shape=rectangle,fill=white,inner sep=1pt},
	xi/.style={circle,fill=symbols!10,draw=symbols,inner sep=0pt,minimum size=1.2mm},
	xix/.style={crosscircle,fill=symbols!10,draw=symbols,inner sep=0pt,minimum size=1.2mm},
	xib/.style={circle,fill=symbols!10,draw=symbols,inner sep=0pt,minimum size=1.6mm},
	xibx/.style={crosscircle,fill=symbols!10,draw=symbols,inner sep=0pt,minimum size=1.6mm},
	not/.style={circle,fill=symbols,draw=symbols,inner sep=0pt,minimum size=0.5mm},
	>=stealth,
}
\colorlet{symbols}{blue!90!black}
\def\DeclareSymbol#1#2#3{\expandafter\gdef\csname MH@symb@#1\endcsname{\tikz[baseline=#2,scale=0.15]{#3}}%
	\expandafter\gdef\csname MH@symb@#1s\endcsname{\scalebox{0.6}{\tikz[baseline=#2,scale=0.15]{#3}}}}
\def\<#1>{\csname MH@symb@#1\endcsname}
\begin{document}

	\title[Global existence and non-uniqueness for 3D NSE with space-time white noise]{Global existence and non-uniqueness for 3D Navier--Stokes equations with space-time white noise}

	\author{Martina Hofmanov\'a}
	\address[M. Hofmanov\'a]{Fakult\"at f\"ur Mathematik, Universit\"at Bielefeld, D-33501 Bielefeld, Germany}
	\email{hofmanova@math.uni-bielefeld.de}
	
	\author{Rongchan Zhu}
	\address[R. Zhu]{Department of Mathematics, Beijing Institute of Technology, Beijing 100081, China; Fakult\"at f\"ur Mathematik, Universit\"at Bielefeld, D-33501 Bielefeld, Germany}
	\email{zhurongchan@126.com}
	
	\author{Xiangchan Zhu}
	\address[X. Zhu]{ Academy of Mathematics and Systems Science,
		Chinese Academy of Sciences, Beijing 100190, China; Fakult\"at f\"ur Mathematik, Universit\"at Bielefeld, D-33501 Bielefeld, Germany}
	\email{zhuxiangchan@126.com}
	\thanks{
This project has received funding from the European Research Council (ERC) under the European Union's Horizon 2020 research and innovation programme (grant agreement No. 949981).  The financial support by the DFG through the CRC 1283 ``Taming uncertainty and profiting
 from randomness and low regularity in analysis, stochastics and their applications'' is greatly acknowledged.
 R.Z. is grateful to the financial supports of the NSFC (No.  11922103).
  X.Z. is grateful to the financial supports  in part by National Key R\&D Program of China (No. 2020YFA0712700) and the NSFC (No. 11771037,  12090014, 11688101) and
  the support by key Lab of Random Complex Structures and Data Science,
 Youth Innovation Promotion Association (2020003), Chinese Academy of Science.
}

\begin{abstract}
We establish global-in-time existence and non-uniqueness of probabilistically strong solutions to the three dimensional Navier--Stokes system driven by space-time white noise.  In this setting, solutions are expected to have  space regularity at most $-1/2-\kappa$ for any $\kappa>0$. Consequently, the convective term is  ill-defined analytically and probabilistic renormalization is required. Up to now, only local well-posedness has been known. With the help of paracontrolled calculus we decompose  the system in a way which makes it amenable to convex integration. By a careful analysis of the regularity of each term, we develop an iterative procedure which yields  global non-unique probabilistically strong paracontrolled solutions.
Our result applies to  any divergence free initial condition in $L^{2}\cup B^{-1+\kappa}_{\infty,\infty}$, $\kappa>0$, and implies also non-uniqueness in law.
\end{abstract}

\subjclass[2010]{60H15; 35R60; 35Q30}
\keywords{stochastic Navier--Stokes equations, probabilistically strong solutions, paracontrolled calculus, non-uniqueness in law, convex integration}

\date{\today}

\maketitle

\tableofcontents

	\section{Introduction}

Thermal fluctuations are   omnipresent on the molecular  level of fluids. As such,  fluids are not deterministic but rather stochastic and continually changing. In order to incorporate these effects into the description of large scale dynamics, Landau and Lifshitz \cite{LL87} proposed a Navier--Stokes system perturbed by stochastic flux terms. These are given by a divergence of delta correlated space-time Gaussian random fields included in the momentum equation.
Mathematically,    it is extremely challenging to make sense of such a system. Indeed, due to the  irregularity of the noise  combined with the  nonlinearity of the  system, the equations become critical in dimension 2 and  supercritical in dimension higher in the sense of the theory of regularity structures by Hairer  \cite{Hai14}: Introducing an ultraviolet cut-off, i.e. mollifying  the noise, and trying to remove the cut-off would  require an infinite number of renormalizations.
Therefore, neither the classical stochastic analysis tools nor the  pathwise theories of regularity structures \cite{Hai14} and paracontrolled distributions by Gubinelli, Imkeller and Perkowski \cite{GIP15} are applicable.
Nevertheless, despite these theoretical difficulties, the Landau--Lifshitz--Navier--Stokes system  has been  verified numerically for various equilibrium as well as non-equilibrium systems (see e.g. \cite{DVEGB10} and references therein).

In this paper, we contribute to the rigorous mathematical understanding of microscopic perturbations in fluid dynamics. To avoid the issue of criticality mentioned above, we consider the delta correlated space-time Gaussian noise as a forcing, not as a flux.
This models forcing acting on the molecular level, which translated to the macroscopic level necessarily becomes delta correlated.  Indeed, any two points in the large scale dynamics are extremely far apart on the molecular level, so their associated noises must be uncorrelated. Moreover, such a noise also appears in a scaling limit of point vortex approximation and  the vorticity form of the 2D Euler equations perturbed by a certain transport type noise (cf. \cite{FL20, FL21, LZ21}). In fact, the scaling limit is given by the vorticity form of the 2D Navier--Stokes system driven by the curl of space-time white noise, which in the velocity-pressure variables reads as 2D Navier--Stokes equations driven by space-time white noise.

The corresponding gain of one derivative in comparison to the Landau--Lifshitz setting makes the system mathematically  accessible as it remains subcritical up to   space dimension 3.
However, even this problem has resisted rigorous mathematical analysis for a long time due to its irregularity.  More precisely, the space-time white noise in spatial dimension $d$ can be shown to be a random distribution  of space-time regularity $-(d+2)/2-\kappa$ under the parabolic scaling for any $\kappa>0$. Accordingly, in view of  Schauder's estimates a solution is expected to be  two degrees of  regularity better, i.e. at most $-d/2+1-\kappa$. Hence, already in $d=2$ solutions are not functions. Consequently, the product in the convective term is  analytically ill-defined and probabilistic arguments are required in order to make sense of the equations.

We consider the three dimensional Navier--Stokes system with periodic boundary conditions driven by a space-time white noise
\begin{equation}\label{main1}
\aligned
\dif u+\div(u\otimes u)\,\dif t+\nabla p\,\dif t&=\Delta u \,\dif t+\dif B,
\\
\div u&=0,
\\ u(0)&=u_{0},
\endaligned
\end{equation}
where $B$ is a cylindrical Wiener process on some stochastic basis $(\Omega,\mathcal{F},(\mathcal{F}_t)_{t\geq0},\mathbf{P})$. The time derivative of $B$ is the space-time white noise. Our main results read as follows.

\begin{theorem}\label{thm:1.1}
	For any given divergence free initial condition $u_{0}\in L^{2}\cup B^{-1+\kappa}_{\infty,\infty}$  $\mathbf{P}$-a.s., $\kappa>0$, there exist infinitely many global-in-time probabilistically strong  solutions solving \eqref{main1} in a paracontrolled sense.
\end{theorem}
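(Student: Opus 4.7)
The plan is to combine the paracontrolled calculus approach of Gubinelli--Imkeller--Perkowski with a stochastic convex integration scheme in the spirit of De Lellis--Sz\'ekelyhidi and Buckmaster--Vicol. The heuristic is that the space-time white noise in $d=3$ forces all the roughness of $u$ to be carried by explicit stochastic trees, while the subtracted remainder is smooth enough that convex integration can be performed on it pathwise and, crucially, in an adapted manner.

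First I would introduce the relevant stationary stochastic objects. Let $Z$ solve the stochastic Stokes system $\dif Z = \Delta Z\,\dif t + \mathbb{P}\,\dif B$ so that $Z$ has parabolic regularity $-1/2-\kappa$. Its Wick square $Z\otimes Z - C_\eps$, the next order tree $Z^{(2)} := -\int_0^t e^{(t-s)\Delta}\mathbb{P}\div(Z\otimes Z - C_\eps)\,\dif s$, and the further resonant trees required at the level of the Da Prato--Debussche / paracontrolled expansion are constructed by probabilistic renormalization, with uniform bounds on $[0,T_L]$ for stopping times $T_L\uparrow\infty$. I would then make the paracontrolled ansatz
\begin{equation*}
u = Z + Z^{(2)} + v,
\end{equation*}
with $v$ paracontrolled by $Z$ in the standard sense, and derive the equation
\begin{equation*}
\p_t v - \Delta v + \div(v\otimes v) + \nabla\pi = F(Z, Z^{(2)},\dots, v),
\end{equation*}
where $F$ collects all mixed stochastic-remainder terms. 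Using commutator lemmas and resonance analysis one verifies that $F$ lives in a space of strictly positive regularity, so $v$ can be sought as a function in $C_t H^\gamma$ for some small $\gamma>0$.

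Next I would run a convex integration iteration for $v$. At level $q$ one carries an approximate couple $(v_q, \mathring R_q)$ solving a Navier--Stokes--Reynolds system whose forcing absorbs $F$ and the accumulated Reynolds stress. The correction $v_{q+1}-v_q$ is built out of intermittent jets or Beltrami flows, with amplitudes tuned both to reduce the stress and to prescribe a target kinetic energy profile $e(t)$. Standard frequency-localized estimates yield $v_q\to v$ in $C_t H^\gamma$ and $\mathring R_q\to 0$ in $C_t L^1$. Different choices of $e(t)$ produce infinitely many limits $v$, hence, setting $u=Z+Z^{(2)}+v$, infinitely many paracontrolled solutions to \eqref{main1} with the same initial datum. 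Non-uniqueness in law then follows from the fact that the marginal laws of $\|u(t)\|_{L^2}$ distinguish solutions with different energy profiles.

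The main obstacle is to reconcile three constraints that are in tension in all previous stochastic convex integration works: (i) \emph{global-in-time existence}, whereas deterministic convex integration gives only a fixed interval; (ii) \emph{adaptedness to $(\mathcal F_t)$}, needed for probabilistically strong solutions, whereas the iteration is inherently pathwise; and (iii) \emph{arbitrary initial data} in $L^2\cup B^{-1+\kappa}_{\infty,\infty}$. For (i) I would run the iteration on successive stopping intervals $[0,T_L]$ where all stochastic norms are under control, with scheme parameters depending on $L$ in such a way that the scheme closes for every $L$; since $T_L\uparrow\infty$ almost surely, pasting yields a solution on $[0,\infty)$. For (ii) every choice of amplitude, cutoff and stress decomposition at each step must be made a measurable function of the past of $B$, which forces smooth cutoffs in time and a careful verification that the Mikado/intermittent building blocks depend adaptedly on the noise. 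For (iii) I would first solve a short-time smoothing problem on an adapted interval $[0,\sigma]$ to bring the data inside a deterministic class on which the convex integration machinery is already set up, then run the iteration on $[\sigma,\infty)$; the case $u_0\in B^{-1+\kappa}_{\infty,\infty}$ is absorbed into the linear stochastic part by incorporating the corresponding heat semigroup term into $Z$. Executing these three adjustments simultaneously, while preserving the regularity budget needed by the paracontrolled analysis, is the heart of the proof.
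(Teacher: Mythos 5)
Your overall architecture (stochastic trees, stopping times $T_L\uparrow\infty$, adapted convex integration, gluing across stopping intervals, non-uniqueness via energy profiles) matches the paper's. But there is a genuine gap at the central analytic step. You claim that after subtracting $Z+Z^{(2)}$ the remainder $v$ satisfies an equation whose right-hand side $F(Z,\dots,v)$ ``lives in a space of strictly positive regularity,'' so that $v$ can be sought in $C_tH^\gamma$ for small $\gamma>0$ and then constructed by convex integration. This fails: $F$ contains the resonant product $v\varocircle z$ with $z\in C^{-1/2-\kappa}$, which is only well defined if $v$ has spatial regularity exceeding $1/2+\kappa$ (after accounting for integrability). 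A small $\gamma$ does not suffice, and no further finite subtraction of trees removes this product, because it involves the unknown. The standard cure is a paracontrolled ansatz for the remainder itself --- but that is incompatible with building the \emph{entire} remainder by convex integration, since the highly oscillatory perturbations $w_{q+1}$ destroy any $C^{1/2+}$ or $H^{1/2+}$ bound. This tension is exactly what the paper resolves by splitting the remainder $h=v^1+v^2$: the irregular part $v^1$ solves a \emph{linear} equation, carries the paracontrolled ansatz $v^1=-\mathbb{P}[(v^1+v^2)\prec\Delta_{>R}\mathcal{I}\nabla z]+v^\sharp-(\cdots)$, and is obtained by a fixed point at every iteration step; convex integration acts only on $v^2$, whose resonant product with $z$ is rendered meaningful through a $W^{2/3,p}$ bound with $p=\tfrac{32}{32-7\alpha}$ close to $1$ --- regularity above $1/2$ traded against integrability barely above $1$, which is precisely what intermittent jets can deliver and what an $H^\gamma$ framework cannot. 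The resulting coupled iteration (Reynolds stress only in the $v^2_q$ equation, $v^1_{q+1}$ recomputed from $v^2_{q+1}$, localizers $\Delta_{\le f(q)}$ to add noise scale by scale, time weights to handle $L^2$ data) is the actual content of the proof and is absent from your outline.

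A secondary issue: absorbing $u_0\in B^{-1+\kappa}_{\infty,\infty}$ into $Z$ by adding $e^{t\Delta}u_0$ contaminates the Gaussian structure, so the Wick renormalization of $Z\otimes Z$ and of the higher trees no longer applies to the cross terms with $e^{t\Delta}u_0$ near $t=0$. The paper instead invokes the local well-posedness theory of Zhu--Zhu to evolve such data to a positive random time, after which the solution lies in $C^{1/2-\kappa}\subset L^2$ and the $L^2$-based scheme takes over; you would need to justify your alternative or adopt this route.
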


The main ideas behind the definition of paracontrolled solution are explained in  Section~\ref{sec:1.3} and the detailed presentation can be found in  Section~\ref{s:stoch}. Probabilistically strong  solutions means that the solutions are adapted to the normal filtration $(\mathcal{F}_t)_{t\geq0}$ generated by the cylindrical Wiener process.

\begin{corollary}\label{cor:1.2}
	Non-uniqueness in law holds for \eqref{main1} for every given initial law  supported on divergence free vector fields in  $L^{2}\cup B_{\infty,\infty}^{-1+\kappa}$, $\kappa>0$.
\end{corollary}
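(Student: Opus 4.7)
The plan is to deduce Corollary~\ref{cor:1.2} directly from Theorem~\ref{thm:1.1} by selecting two probabilistically strong solutions whose laws on path space are distinct. A probabilistically strong paracontrolled solution is, in particular, adapted to the cylindrical Wiener filtration and satisfies \eqref{main1} in the analytic/paracontrolled sense, so its law on the canonical path space provides a weak (martingale) solution with initial marginal $\mu$. Consequently, exhibiting two such solutions with different path laws is exactly the content of non-uniqueness in law.

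The natural mechanism to separate two solutions is the kinetic energy. It suffices to produce, on the same stochastic basis and with the same initial condition $u_{0}$ of law $\mu$, two probabilistically strong solutions $u^{(1)}, u^{(2)}$ and a time $t_{0}>0$ such that
\[
\mathbf{E}\|u^{(1)}(t_{0})\|_{L^{2}}^{2} \ \neq \ \mathbf{E}\|u^{(2)}(t_{0})\|_{L^{2}}^{2},
\]
because the functional $\omega\mapsto \|\omega(t_{0})\|_{L^{2}}^{2}$ is measurable on the canonical path space of divergence-free distributions, and distinct expectations force distinct one-time marginals, hence distinct path laws. I would therefore reopen the inductive paracontrolled convex-integration scheme that produces the ``infinitely many'' solutions of Theorem~\ref{thm:1.1} and extract from it the standard energy-prescription feature of Buckmaster--Vicol-type constructions: for any smooth target profile $e(t)$ with $e(0)=\mathbf{E}\|u_{0}\|_{L^{2}}^{2}$, the iteration can be tuned to produce a solution whose expected kinetic energy approximates $e(t)$ uniformly on $[0,T]$. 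Picking two profiles $e_{1}, e_{2}$ that agree at $t=0$ but differ at some $t_{0}>0$ then delivers the required pair, and the corollary follows. Observe that since Theorem~\ref{thm:1.1} already accommodates random initial data almost surely in $L^{2}\cup B^{-1+\kappa}_{\infty,\infty}$, no separate measurable-selection argument is needed to handle an arbitrary initial law $\mu$.

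The main obstacle is \emph{not} the abstract reduction from pathwise non-uniqueness to non-uniqueness in law --- the energy criterion trivializes that step --- but rather verifying, inside the proof of Theorem~\ref{thm:1.1}, that the paracontrolled convex-integration machinery genuinely allows the user to prescribe the energy profile. Concretely, one needs the renormalized stochastic objects entering the paracontrolled ansatz to remain strictly subleading in $L^{2}$ with respect to the convex-integration perturbation at every step of the iteration, so that the expected kinetic energy is driven by the tunable Buckmaster--Vicol piece and not distorted by the stochastic correctors. Once this quantitative property is isolated from the construction behind Theorem~\ref{thm:1.1}, Corollary~\ref{cor:1.2} follows at once.
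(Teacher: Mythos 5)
Your overall strategy---produce two probabilistically strong solutions from the same initial condition that can be distinguished by an $L^{2}$-type functional, then pass to laws---is aligned with the paper's, which separates the solutions via the free constant $K=\gamma_{3}$ in the iteration (see \eqref{p:gammap} and \eqref{eq:Kp} in the proof of Theorem~\ref{thm:6.1}) and then invokes the transfer argument of \cite[Corollary 1.2]{HZZ21}. However, your concrete implementation has a genuine gap: the quantity $\mathbf{E}\|u^{(i)}(t_{0})\|_{L^{2}}^{2}$ does not make sense in this setting. Solutions of \eqref{main1} have spatial regularity at most $-1/2-\kappa$, since $u=z+z^{\<20>}+h$ with $z\in C^{-1/2-\kappa}$; hence $\|u(t_{0})\|_{L^{2}}=\infty$ almost surely and the kinetic energy of $u$ cannot separate the two laws. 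The separation must be performed on the renormalized remainder (in the paper, on $\|v^{1}+v^{2}\|_{L^{2}}$), and one must then argue that this remainder is a measurable functional of the path-space data; this is precisely why the argument borrowed from \cite{HZZ21} works with the joint law of the solution and the driving noise rather than with the law of $u$ alone. Your closing remark that the stochastic correctors should be ``strictly subleading in $L^{2}$'' misses that they are not in $L^{2}$ at all.

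A second, smaller issue: the paper does not prescribe an energy profile $e(t)$, and such a prescription is neither established nor needed. Estimate \eqref{p:gammap} only controls the energy increment of $v^{2}$ on $(4\sigma_{q}\wedge T_{L},T_{L}]$ up to an error $7M_{L}\delta_{q+1}$, the perturbations are switched off near $t=0$ by the cut-offs $\chi$, and the $L^{2}$-norm of $v^{1}$ is only bounded above by $M_{L}^{1/2}$-type quantities, not pinned down. What the construction actually delivers is a single tunable parameter $K$ such that $\big|\|v^{2}(t)\|_{L^{2}}^{2}-3K\big|\le c$ on $(4\sigma_{0}\wedge T_{L},T_{L}]$ with $c$ independent of $K$; choosing $K,K'$ far apart relative to $c$ and to the a priori bound on $v^{1}$ separates the solutions pathwise on the positive-probability event $\{4\sigma_{0}<T_{L}\}$. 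The argument is pathwise on that event, not in expectation, which also sidesteps the integrability issues created by the stopping-time construction. If you replace the expected kinetic energy of $u$ by this pathwise comparison of the $L^{2}$-norms of the remainders, and supply the measurability step from \cite{HZZ21}, your reduction goes through.
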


\subsection{Singular SPDEs}

In two space dimensions, the problem was solved locally in time in the seminal paper by Da~Prato, Debussche	\cite{DD02}. Furthermore, using the properties of the Gaussian invariant measure, it was possible to obtain global-in-time existence for a.e. initial condition with respect to the invariant measure. By the strong Feller property in \cite{ZZ17}  global-in-time existence for every initial condition could be derived. Recently, for a related two dimensional critical problem, tightness of approximate stationary solutions and non-triviality of the limit has been established in a weak coupling regime  by Cannizzaro and Kiedrowski  \cite{CK21}.

The more irregular three dimensional setting remained open for much longer as substantially new ideas were required. These came in a parallel development with the theory of regularity structures by Hairer \cite{Hai14} and with the paracontrolled distributions introduced by Gubinelli, Imkeller and Perkowski \cite{GIP15}. These theories permit to treat a large number of singular subcritical SPDEs (cf. \cite{BHZ19, CH16, BCCH21}) including the Kardar--Parisi--Zhang (KPZ) equation, the generalized parabolic Anderson model and the stochastic quantization equations for quantum fields (see \cite{Hai13, CC15, GP17,  CCHS20} and references therein). In particular they led to a local well-posedness theory for the Navier--Stokes system \eqref{main1} in three dimensions by Zhu, Zhu \cite{ZZ15}.

The question of global existence is even more challenging. Roughly speaking, in the field of singular SPDEs the only available global existence results rely either on a strong drift  present in the system or a particular transform for certain nonlinearities or on  properties of an invariant measure:

\begin{itemize}
	
	\item Suitable a priori estimates  have been established
	for the dynamical $\Phi^4$ stochastic quantization model by Mourrat and Weber \cite{MW17,MW18} and Gubinelli and Hofmanov\'a \cite{GH18, GH18a} (see also \cite{SSZZ20, SZZ21} for the vector valued case). All these results make an essential use of the strong damping term $-\phi^3$.
	
	\item In \cite{GP17, PR18, ZZZ20}, a priori estimates and paracontrolled solutions to
	the KPZ equation and singular Hamilton--Jacobi--Bellman equation
	were obtained by using Cole--Hopf's  or Zvonkin's transform and maximum principle.
	
	\item Moreover, using the probabilistic notion of energy solutions \cite{GJ14, GJ13,GP18}
	or studying the associated  infinitesimal generator and Kolmogorov equation \cite{GP18a},
	it is possible to construct global solutions to
	KPZ equation, but this  depends on the invariant Gaussian measure  (i.e. the law of Brownian motion or spatial white noise).
	\item We mention that in \cite{RWZZ20,CWZZ18}
	global martingale solutions
	were constructed
	for geometric stochastic heat equations
	by using a Dirichlet form approach. This
	relies on an integration by parts formula for the known invariant measure. 	
\end{itemize}

No such results are available
for the 3D Navier--Stokes system with space-time white noise:
\begin{itemize}
	\item There is no strong drift helping to stabilize the evolution.
	\item Due to the appearance of the divergence free condition and the corresponding pressure term, it is impossible to apply maximum principle or Cole--Hopf's transform.
	\item The existence of an invariant measure is an open problem.
	\item No global energy (or other) estimates are available due to irregularity  of solutions (see below for more explanation on this point).
\end{itemize}

Furthermore, even if the regularity of the noise is increased, global existence is not known. More precisely, consider the following regularized problem which interpolates between the case of a trace-class noise and the space-time white noise
\begin{equation}\label{main3}
\aligned
\dif u+\div(u\otimes u)\,\dif t+\nabla p\,\dif t&=\Delta u \,\dif t+(-\Delta)^{-\gamma/2}\dif B,
\\
\div u&=0.
\endaligned
\end{equation}
Here $B$ is the cylindrical Wiener process. The case of $\gamma>3/2$ corresponds to a trace-class noise whereas $\gamma=0$ is the space-time white noise.

For $\gamma>3/2$, global  existence of  probabilistically weak Leray solutions is classical (see Flandoli, Gatarek \cite{FG94}). Their uniqueness remains an outstanding open problem. The authors in \cite{FG94} also constructed stationary solutions via Krylov--Bogoliubov's argument and  energy estimates.  Ergodicity and  strong Feller property was proved by Da~Prato and Debussche \cite{DD03} and also by Flandoli and Romito \cite{FR08}. A Markov transition semigroup was then constructed   by Debussche, Odasso \cite{DO06} and by Flandoli and Romito \cite{FR08}. However, further  structure and properties of the invariant measure  are still unclear.  Recently, in \cite{HZZ19} we established  non-uniqueness in law in a class of analytically weak (not Leray) solutions using the method of convex integration. In \cite{HZZ21}, we additionally presented  non-uniqueness of Markov solutions and global-in-time existence and non-uniqueness of probabilistically strong and analytically weak solutions.

If $\gamma\leq 3/2$ the usual energy estimates pertinent to the notion of Leray solution are not available. Indeed,  as the noise is no longer trace-class, It\^o's formula cannot be applied.
However, for $\gamma\in (1/2,3/2]$ it is still possible to prove global existence of probabilistically weak solutions. Namely, decomposing  the velocity $u$ into the sum of its stochastic part $z$ and its nonlinear part $v$ (as shown also in  Section~\ref{s:3.1} below), one can derive a global energy estimate for $v$. It can be then combined  with compactness and  Skorokhod representation theorem to deduce global existence. Nevertheless, since it is necessary to change the probability space in the course of the proof, these solutions are only probabilistically weak.

Up to now, even global existence of probabilistically weak solutions was open in the case of $\gamma\leq 1/2$. Only local well-posedness in the spirit of Zhu, Zhu \cite{ZZ15} seemed to be possible.
The difficulty can be seen as follows. If e.g.  $\gamma=1/2$, we decompose $u=v+z$ with $z$ solving the linear stochastic equation
$$
\dif z +\nabla p^{z}\, \dif t = \Delta z\, \dif t +(-\Delta)^{-1/4}\dif B,\qquad \div z=0,
$$
and $v$ solving the nonlinear  equation
$$\p_tv+\div((v+z)\otimes (v+z))+\nabla p^{v}=\Delta v,\qquad \div v=0.$$
As $z\in C_TC^{-\kappa}$, the best regularity for $v$ is given by $C_TC^{1-\kappa}$. Hence we cannot expect the energy estimate for $v$ since this would require the $L^2_TH^{1}$-norm of $v$. We may include further decomposition like in the case of the $\Phi^{4}$ model  \cite{MW17, MW18,GH18, GH18a}, but this would lead to new nonlinear terms which cannot be absorbed as in the $\Phi^4$ model. Due to this, we also cannot obtain energy inequality and uniform estimates as in the case of a trace-class case. Consequently, we cannot derive global solutions by a usual compactness argument. Note that it is also possible to consider global solutions for  small initial data (cf. \cite{BR17}). However, this would destroy adaptedness of the solutions since the initial data would depend on the whole path of the driving noise.

\subsection{Convex integration}

In the present paper, we focus on the case of space-time white noise, i.e. $\gamma=0$. Simplified versions of our proofs as outlined in Section~\ref{s:reg} also provide the results in the more regular cases of $\gamma>0$.
Our idea is to apply the method of  convex integration in order to construct global-in-time solutions. This is an iterative procedure which permits to construct solutions explicitly scale by scale. It makes an essential use of the form of the nonlinearity which propagates oscillations and reduces an error term, the so-called Reynolds stress, in order to  approach a solution as one proceeds through the iteration. As typical for the convex integration constructions, the same method gives raise to infinitely many solutions.

Convex integration was introduced into fluid dynamics by De~Lellis and Sz{\'e}kelyhidi~Jr. \cite{DelSze2,DelSze3,DelSze13}. This method has already led to a number of groundbreaking results concerning the incompressible Euler equations,  culminating  in the proof of  Onsager's conjecture by Isett \cite{Ise} and by Buckmaster, De~Lellis, Sz{\'e}kelyhidi~Jr. and Vicol \cite{BDSV}.
Also the question of well/ill-posedness of the three dimensional  Navier--Stokes equations has experienced an immense  breakthrough: Buckmaster and Vicol \cite{BV19a} established non-uniqueness of weak solutions with finite kinetic energy,
Buckmaster, Colombo and Vicol \cite{BCV18} were  able to connect two arbitrary strong solutions via a weak solution. Burczak, Modena and Sz{\'e}kelyhidi~Jr. \cite{BMS20} then obtained a number of new ill-posedness results for power-law fluids and in particular also non-uniqueness of weak solutions to the Navier--Stokes equations for every given divergence free initial condition in $L^{2}$. Sharp non-uniqueness results for the Navier--Stokes equations in dimension $d\geq 2$ were obtained by Cheskidov and Luo \cite{CL20, CL21}.
We  refer to the  reviews \cite{BV19, BV20} for further details and references.

All these convex integration results very much rely on the $L^{2}$-setting. Namely, the constructed solutions belong (at least) to $L^{2}$ and the iteration converges strongly in $L^{2}$, hence  one can pass to the limit in the quadratic nonlinearity. As the energy inequality is available in this setting, it is also understood as a natural selection criterion for physical solutions. Convex integration yields  such  solutions  for Euler equations (see \cite{DelSze3}, and \cite{HZZ20} for the stochastic setting), or when the diffusion is  weak,  e.g. for power-law fluids with small parameter $p$ (see \cite{BMS20}) and for hypodissipative Navier--Stokes equations for small $\alpha$ (see \cite{CDD18}). However, constructing Leray solutions by convex integration to the Navier--Stokes system seems  to be out of reach at the moment. By a different method,   a first non-uniqueness result for Leray solutions was  established recently by Albritton, Bru\'e and Colombo \cite{ABC21} for the Navier--Stokes system with a force.

Compared to the  classical uniform estimates and the compactness argument, convex integration provides a new way of constructing solutions. This turns out to be  particularly useful in the stochastic setting as uniqueness of Leray solutions is unknown and there has been no result of existence of global probabilistically strong solutions before. In \cite{HZZ21}, we proved  such a result for a trace-class noise by  convex integration.
In less regular settings, as for instance for $\gamma\leq 1/2$ discussed above, there are no Leray solutions to compete with in the first place. Furthermore, there are no alternative globally defined solutions whatsoever (neither probabilistically strong nor probabilistically weak).
In this paper, we use convex integration  to construct global probabilistically strong solutions in this setting when the energy inequality is out of reach.
The question of how to select physical solutions in this case remains   open.

\subsection{Decomposition}\label{sec:1.3}

We introduce a decomposition of the Navier--Stokes system \eqref{main1}, which makes also this singular setting amenable to convex integration. Recall that solutions are only expected to have regularity $-1/2-\kappa$, $\kappa>0$, hence the quadratic term $u\otimes u$ is far from being well-defined analytically. The common idea in the field of singular SPDEs is to prescribe a particular form of a solution $u$ so that the nonlinearity can be  made sense of. In the first step,  we write
$$
u=z+z^{\<20>}+h.
$$
The first term $z$ solves the stochastic heat equation
$$
\dif z +\nabla p^{z}\, \dif t = \Delta z\, \dif t +\dif B,\qquad \div z=0,
$$
and permits to isolate the most irregular part of $u$, the rest being more regular. Note that by Schauder estimates, $z$ belongs to $B^{-1/2-\kappa}_{\infty,\infty}$ and hence the product $z\otimes z$ is not well defined in the classical sense. As $z$ is Gaussian, we can understand $z\otimes z$ as a Wick product using renormalization. In particular, for a suitable mollification $z_{\varepsilon}$ of $z$, there are diverging constants $C_{\varepsilon}\in \mathbb{R}^{3\times 3}$, $C^{i j}_{\varepsilon}\to\infty$, so that
$$
z^{\<2>}=\lim_{\varepsilon\to 0} z_{\varepsilon}\otimes z_{\varepsilon}-C_{\varepsilon}
$$
is well defined and belongs to $B^{-1-\kappa}_{\infty,\infty}$. More details of the probabilistic constructions are included in Section~\ref{s:stoch}.
In order to isolate the corresponding (still irregular) part of the solution $u$, we then define
$$
\partial_{t} z^{\<20>} + \div(z^{\<2>})+\nabla p^{2} = \Delta z^{\<20>} ,\qquad \div z^{\<20>}=0,
$$
which by Schauder estimates belongs to $B^{-\kappa}_{\infty,\infty}$.

It is then seen that the remainder $h$ is already function valued but its regularity is necessarily limited by $1/2-\kappa$. Even further decomposition cannot improve this regularity since products of  $z$ with the unknown always appear in the equation. A way how to overcome this issue stems from the work by Gubinelli, Imkeller and Perkowski \cite{GIP15} on the parabolic Anderson model and was applied to \eqref{main1}  by Zhu, Zhu in \cite{ZZ15}: one postulates a paracontrolled ansatz which describes a further structure of the solution $h$. It reads as
$$
h=-\mathbb{P} [h \prec  \mathcal{I} \nabla z] +
\vartheta -(z^{\<21l1>}+z^{\<211>}).
$$
Here, $z^{\<21l1>}$, $z^{\<211>}$ are additional stochastic objects constructed by renormalization, $\mathcal{I}$ is the heat operator, $\mathbb{P}$ the Helmholtz projection and finally $\prec$ denotes a paraproduct as introduced by Bony \cite{Bon81}. This permits to cancel the two most irregular terms in the equation for $h$ so that the remainder $\vartheta$ becomes more regular, namely,  $1-\kappa$. Furthermore, by a commutator lemma it permits to make sense of the analytically ill-defined product $h\otimes z$. We refer to Section~\ref{s:para} for basic definitions of paracontrolled calculus and to Section~\ref{s:sol} for more details on the notion of our paracontrolled solution. In view of the  regularity of solutions,  we also see that energy inequality is impossible in this case.

The above decomposition is sufficient to prove  local well-posedness as done in \cite{ZZ15}.
However, a much more refined analysis is indispensable to apply convex integration. Therefore, we split further $h=v^{1}+v^{2}$ where $v^{1}$ represents the irregular part and $v^{2}$ the regular one. In addition, the equation for $v^{1}$ is linear whereas the one for $v^{2}$ contains the quadratic nonlinearity. Similarly to the above discussion of the more regular cases \eqref{main3} with $\gamma\leq 1/2$, even with this decomposition into $v^{1}+v^{2}$, it is not possible to derive global estimates via the energy method.
Our idea is instead to apply convex integration on the level of $v^{2}$. However, the equation for $v^{2}$ is  coupled with the equation for $v^{1}$. Therefore, we put forward a joint iterative procedure approximating both equations at once. The~Reynolds stress $\mathring{R}_{q}$ is only included in the equation for $v_{q}^{2}$, where $q\in\mathbb{N}_{0}$ is the iteration parameter. Consequently, the construction of the new iteration $v^{2}_{q+1}$ relies only on the previous stress $\mathring{R}_{q}$. Here, we employ the intermittent jets by Buckmaster, Colombo and Vicol \cite{BCV18} (see also \cite{BV19} and our previous works \cite{HZZ19,HZZ21}). As the next step, we solve the equation for $v_{q+1}^{1}$ exactly by a fixed point argument. See Figure~\ref{f:1} for a sketch of our procedure.

 In order to make this strategy possible, it is necessary to find the decomposition of the equation for $h$ into the system for $v^{1}$ and $v^{2}$ and  to define the corresponding equations for the iterations $v^{1}_{q}$ and $v^{2}_{q}$. This together with the construction of each approximate velocity $v^{2}_{q+1}$ through the intermittent jets has to be done in a way to decrease the corresponding Reynolds stress $\mathring{R}_{q+1}$ as $q\to\infty$. Especially the control of $\mathring{R}_{q+1}$ requires a careful analysis of each of the terms appearing in the equation for $h$. We have to balance various competing factors such as regularity, integrability, blow-up as $t\to0$ and blow-up as $q\to\infty$ of various terms. The divergencies need to be compensated by small quantities. We rely on a decomposition of each product into the two paraproducts and the resonant term, because each of these parts behaves differently and requires a different treatment. Roughly speaking, irregular terms are included into $v^{1}$ while regular ones into $v^{2}$, but the precise splitting is delicate. Further issues and required ideas are summarized as follows:
\begin{itemize}

\item In the first step, we aim at constructing convex integration solutions up to a stopping time. The stopping time can be chosen arbitrarily large with arbitrarily large probability and  is used  to have uniform in $\omega$ bounds for the stochastic objects. The reason for this is adaptedness: without a stopping time, the parameters would depend on the whole path and the constructed solutions could not be  adapted to the given filtration $(\mathcal{F}_{t})_{t\geq0}$. Thus, they would not be probabilistically strong.

\item In the second step, we  overcome this limitation by extending the constructed solutions by other convex integration solutions. To this end, it is necessary to obtain convex integration solutions for any given divergence free initial condition in $L^{2}$. Hence, we aim at starting $v^{1}$ as well as $v^{1}_{q}$ from the given initial condition, whereas $v^{2}$ and $v^{2}_{q}$ all  start from zero. This simplifies the begin of the iterative procedure. To keep the same initial value during the iteration, the oscillations can only be added for positive times and we approach $t=0$ as $q\to\infty$.

\item A paracontrolled ansatz for $v^{1}$ and accordingly also for each $v^{1}_{q}$ needs to be included in order to make sense of the resonant part of the product $v^{1}\otimes z$ and $v^{1}_{q}\otimes z$. At each iteration step $q$, the equation for $v^{1}_{q}$ is coupled with the equation for the corresponding remainder $v^{\sharp}_{q}$ (taking the role of $\vartheta$ above).

\item Since the initial value of $v^{1}$, $v^{1}_{q}$ and $v^{\sharp}_{q}$ is only in $L^{2}$, they have singularity at time zero when considered in more regular function spaces. But higher regularity is indispensable in order to control various terms, both in the equation for $v^{1}_{q}$ and $v^{\sharp}_{q}$ but also in the formula for $\mathring{R}_{q+1}$. For this reason, we work with blow-up norms in time, but this brings an extra blow-up  in the convex integration part and in particular in the estimate of $\mathring{R}_{q+1}$.

\item We introduce additional (uniform in $q$) localizers $\Delta_{>R}+\Delta_{\leq R}$ in terms of Littlewood--Paley blocks.
The~part $\Delta_{>R}$ is always included into $v^{1}$ and helps to simplify the estimates of $v^{1}_{q}$ and $v^{\sharp}_{q}$ as it provides an arbitrarily  small constant and  avoids  the need for a Gronwall lemma.

\item Some terms are regular and could be, in principle, included into $v^{2}$, but they require  regularity of $v_{q}^{1}$ which does not hold  true uniformly as $t\to 0$. While the fixed point equation for $v^{1}_{q}$ can be solved using suitable blow-up norms in time to overcome the singularity at $t=0$, having such blow-ups in the equation for $v^{2}_{q}$ cannot be controlled in the convex integration. Hence,  we further decompose these terms by using $\Delta_{>R}+\Delta_{\leq R}$ and include their irregular parts into $v_{q}^{1}$.

\item Moreover, in order to control the blow-up of certain terms, we include $q$-dependent localizers $\Delta_{\leq f(q)}$ for a suitable $f(q)\to\infty$ into the equation for $v^{1}_{q}$. Thanks to this, we are able to add irregularity scale by scale in a controlled way. The opposite approach, namely including the irregular terms fully  at the beginning of the iteration, does not seem to be  possible.

\end{itemize}

The detailed decomposition is presented in Section~\ref{s:dec}. As a preparation, we explain the main ideas on the simpler settings of \eqref{main3} with $\gamma>0$ in Section~\ref{s:reg}.

\subsection{Final remarks}

Previously, convex integration has always been  used to deduce non-uniqueness of solutions in  settings where energy inequality is available.  It has also been used to obtain first existence results for weak solutions   in situations where compactness does not guarantee the passage to the limit in  the convective term,  namely, for Euler equations (see \cite{DelSze2,DelSze3,DelSze13}), and  for power-law fluids with small parameter $p$ (see \cite{BMS20}).  As already mentioned above, these results even lead to infinitely many weak solutions satisfying the energy inequality.

Our result shows that convex integration can also be used to construct global solutions when the energy inequality is out of reach, in particular in the field of singular SPDEs. We  hope that our technique could also be applied  to other singular SPDEs, especially in  cases where no strong damping  is at hand. We also mention that the convex integration can be applied to other PDEs like transport and continuity equation \cite{MS17, MS18},  3D Hall--MHD system \cite{Da18} and hyperviscous Navier--Stokes equations \cite{LT18}. Our approach may also be applied to the corresponding singular versions of the latter two.

The nonlinearity in the Navier--Stokes system looks similarly to the one in the Langevin dynamic for the Yang--Mills measure, i.e. stochastic quantization of the Yang--Mills field. This was  considered in \cite{She18, CCHS20} where local-in-time solutions were constructed. However,  existence of global solutions remains open. The idea  is to use the dynamics and PDE techniques to study properties of the field. Formally, these equations have the law of the associated field as an invariant measure. In the case of the stochastic quantization of the Euclidean $\Phi^{4}$ field theory, it was indeed possible to use the dynamics to construct and study  properties of the corresponding measure (see \cite{GH18a} and \cite{SZZ21a}). As global existence for the Langevin dynamic for the Yang--Mills measure is out of reach by the classical PDE techniques, we hope that our technique can shed some light on this problem.

Finally, we point out that in the field of regularization by noise, it is believed that more noise, in the sense of more irregular noise, in the Navier--Stokes equations may imply uniqueness. Certain regularizing effect could indeed be proved on the level of the so-called strong Feller property established for \eqref{main1} by Zhu, Zhu \cite{ZZ17}.
Nevertheless, our results show that even in this case we still have non-uniqueness. Furthermore, also a weaker notion of uniqueness, namely, uniqueness in law is disproved.

\subsubsection*{Organization of the paper} In Section~\ref{sec:pre} we introduce our notation and present preliminary results on Besov spaces, paraproducts and paracontrolled calculus. Section~\ref{s:reg} is devoted to the more regular settings of \eqref{main3} with $\gamma>0$ and we discuss the main ideas of our decomposition. In particular, it is shown how decreasing the parameter $\gamma$, i.e. making the problem more irregular, necessarily requires further ideas and a more refined decomposition. In Section~\ref{s:pa} we recall the construction of stochastic objects and introduce the notion of paracontrolled solution. We also present a formal decomposition of the system into the system for $v^{1}$ and $v^{2}$ as discussed above. The set-up of the iterative convex integration procedure and proofs of our main results are shown in Section~\ref{s:con}. We give estimates of $v^1_q$ and $v^\sharp_q$   in Section \ref{s:v1vs}. Section~\ref{ss:it2} is devoted to the core of the convex integration construction, namely, the iteration Proposition~\ref{p:iteration p}. Finally, in Appendix~\ref{s:B} we recall the construction of intermittent jets and in Appendix~\ref{s:sch} we prove auxiliary Schauder estimates.

\section{Preliminaries}
\label{sec:pre}

Throughout the paper, we use the notation $a\lesssim b$ if there exists a constant $c>0$ such that $a\leq cb$, and we write $a\simeq b$ if $a\lesssim b$ and $b\lesssim a$.

\subsection{Function spaces}
Given a Banach space $E$ with a norm $\|\cdot\|_E$ and $T>0$, we write $C_TE=C([0,T];E)$ for the space of continuous functions from $[0,T]$ to $E$, equipped with the supremum norm $\|f\|_{C_TE}=\sup_{t\in[0,T]}\|f(t)\|_{E}$.  For $p\in [1,\infty]$ we write $L^p_TE=L^p([0,T];E)$ for the space of $L^p$-integrable functions from $[0,T]$ to $E$, equipped with the usual $L^p$-norm.
We use $(\Delta_{i})_{i\geq -1}$ to denote the Littlewood--Paley blocks corresponding to a dyadic partition of unity.
Besov spaces on the torus with general indices $\alpha\in \R$, $p,q\in[1,\infty]$ are defined as
the completion of $C^\infty(\mathbb{T}^{d})$ with respect to the norm
$$
\|u\|_{B^\alpha_{p,q}}:=\left(\sum_{j\geq-1}2^{j\alpha q}\|\Delta_ju\|_{L^p}^q\right)^{1/q}.$$
The H\"{o}lder--Besov space $C^\alpha$ is given by $C^\alpha=B^\alpha_{\infty,\infty}$
and we also set $H^\alpha=B^\alpha_{2,2}$, $\alpha\in \mathbb{R}$.
To deal with the singularity at time zero we introduce the following blow-up norms: for $\alpha\in(0,1),$ $p\in[1,\infty]$
$$
\|f\|_{C_{T,\gamma}^{\alpha}L^p}:=\sup_{0\leq t\leq T}t^{\gamma}\|f(t)\|_{L^{p}}+\sup_{0\leq s<t\leq T}s^\gamma \frac{\|f(t)-f(s)\|_{L^p}}{|t-s|^\alpha},
$$
$$
 \|f\|_{C_{T,\gamma}B^\alpha_{p,\infty}}:=\sup_{0\leq t\leq T}t^\gamma \|f(t)\|_{B^\alpha_{p,\infty}}.$$
For $T>0$ and a domain $D\subset\R^{+}$ we denote by  $C^{N}_{T,x}$ and $C^{N}_{D,x}$, respectively, the space of $C^{N}$-functions on $[0,T]\times\mathbb{T}^{3}$ and on $D\times\mathbb{T}^{3}$, respectively,  $N\in\N_{0}:=\N\cup \{0\}$. The spaces are equipped with the norms
$$
\|f\|_{C^N_{T,x}}=\sum_{\substack{0\leq n+|\alpha|\leq N\\ n\in\N_{0},\alpha\in\N^{3}_{0} }}\|\partial_t^n D^\alpha f\|_{L^\infty_T L^\infty},\qquad \|f\|_{C^N_{D,x}}=\sum_{\substack{0\leq n+|\alpha|\leq N\\ n\in\N_{0},\alpha\in\N^{3}_{0} }}\sup_{t\in D}\|\partial_t^n D^\alpha f\|_{ L^\infty}.
$$

Set $\Lambda= (1-\Delta)^{{1}/{2}}$. For $s\geq0$, $p\in [1,+\infty]$ we use $W^{s,p}$ to denote the subspace of $L^p$, consisting of all  $f$   which can be written in the form $f=\Lambda^{-s}g$, $g\in L^p$ and the $W^{s,p}$ norm of $f$ is defined to be the $L^p$ norm of $g$, i.e. $\|f\|_{W^{s,p}}:=\|\Lambda^s f\|_{L^p}$. For $s<0$, $p\in (1,\infty)$, $W^{s,p}$ is the dual space of $W^{-s,q}$ with $\frac{1}{p}+\frac{1}{q}=1$.

The following embedding results will  be frequently used (we refer to e.g.  \cite[Lemma~A.2]{GIP15} for the first one and to \cite[Theorem 4.6.1]{Tri78} for the second one).

\bl\label{lem:emb}
~
\begin{enumerate}
\item Let $1\leq p_1\leq p_2\leq\infty$ and $1\leq q_1\leq q_2\leq\infty$, and let $\alpha\in\mathbb{R}$. Then $B^\alpha_{p_1,q_1} \subset B^{\alpha-d(1/p_1-1/p_2)}_{p_2,q_2}$.

\item Let $s\in \R$, $1<p<\infty$, $\epsilon>0$. Then $W^{s,2}=B^s_{2,2}=H^s$, and
$B^s_{p,1}\subset W^{s,p}\subset B^{s}_{p,\infty}\subset B^{s-\epsilon}_{p,1}$.
\end{enumerate}

\el

\subsection{Paraproducts, commutators and localizers}
\label{s:para}

Paraproducts were introduced by Bony in \cite{Bon81} and they permit to decompose a product of two distributions into three parts which behave differently in terms of regularity. More precisely, using the Littlewood-Paley blocks, the product $fg$ of two Schwartz distributions $f,g\in\mathcal{S}'(\mathbb{T}^{d})$ can be formally decomposed as
$$fg=f\prec g+f\circ g+f\succ g,$$
with $$f\prec g=g\succ f=\sum_{j\geq-1}\sum_{i<j-1}\Delta_if\Delta_jg, \quad f\circ g=\sum_{|i-j|\leq1}\Delta_if\Delta_jg.$$
Here, the paraproducts $\prec$ and $\succ$ are always well-defined and critical is the resonant product denoted by $\circ$.
In general, it is only well-defined provided  the sum of the regularities of $f$ and $g$ in terms of Besov spaces is strictly positive.
Moreover, we have the following paraproduct estimates from \cite{Bon81} (see also \cite[Lemma~2.1]{GIP15},  \cite[Proposition~A.7]{MW18}).

\begin{lemma}\label{lem:para}
	Let  $\beta\in\R$, $p, p_1, p_2, q\in [1,\infty]$ such that $\frac{1}{p}=\frac{1}{p_1}+\frac{1}{p_2}$. Then  it holds
	\begin{equation*}
	\|f\prec g\|_{B^\beta_{p,q}}\lesssim\|f\|_{L^{p_1}}\|g\|_{B^{\beta}_{p_2,q}},
	\end{equation*}
	and if $\alpha<0$ then
	\begin{equation*}
	\|f\prec g\|_{B^{\alpha+\beta}_{p,q}}\lesssim\|f\|_{B^{\alpha}_{p_1,q}}\|g\|_{B^{\beta}_{p_2,q}}.
	\end{equation*}
	If  $\alpha+\beta>0$ then it holds
	\begin{equation*}
	\|f\circ g\|_{B^{\alpha+\beta}_{p,q}}\lesssim\|f\|_{B^{\alpha}_{p_1,q}}\|g\|_{B^{\beta}_{p_2,q}}.
	\end{equation*}
\end{lemma}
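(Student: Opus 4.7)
The plan is to derive all three bounds from Bony's decomposition combined with Bernstein's inequality and an elementary discrete convolution estimate; I handle them in order. The starting observation, common to all three, is that each block $S_{j-1}f\,\Delta_j g$, respectively $\Delta_i f\,\Delta_j g$ with $|i-j|\le1$, has Fourier support in an annulus of size $\sim 2^j$, respectively in a \emph{ball} of radius $\sim 2^j$. Consequently, the action of $\Delta_k$ leaves only finitely many of the paraproduct blocks (those with $|j-k|\le N_0$ for some fixed $N_0$), while in the resonant case it leaves \emph{all} indices $j\ge k-N_0$. On each surviving block H\"older's inequality with $\tfrac1p=\tfrac1{p_1}+\tfrac1{p_2}$ will separate the two factors.

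For the first estimate, I would use the uniform $L^{p_1}$-boundedness of the partial sums $S_{j-1}$ (a consequence of Mikhlin's multiplier theorem) to obtain $\|S_{j-1}f\|_{L^{p_1}}\lesssim\|f\|_{L^{p_1}}$, which gives
\begin{equation*}
2^{k\beta}\|\Delta_k(f\prec g)\|_{L^p}\lesssim \|f\|_{L^{p_1}}\sum_{|j-k|\le N_0}2^{j\beta}\|\Delta_j g\|_{L^{p_2}};
\end{equation*}
taking the $\ell^q$-norm in $k$ yields the claim since the inner sum has only $O(1)$ terms. For the second estimate, I would replace the previous bound on $\|S_{j-1}f\|_{L^{p_1}}$ by a Besov bound: writing $S_{j-1}f=\sum_{i\le j-2}\Delta_i f$ and using $\alpha<0$ to sum the geometric series $\sum_{i\le j-2}2^{-i\alpha}\lesssim 2^{-j\alpha}$ leads to $2^{j\alpha}\|S_{j-1}f\|_{L^{p_1}}\lesssim\|f\|_{B^\alpha_{p_1,\infty}}\le\|f\|_{B^\alpha_{p_1,q}}$. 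The remainder of the argument is then identical to the first case, with $2^{k\beta}$ replaced by $2^{k(\alpha+\beta)}$.

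For the resonant product, setting $a_j:=2^{j\alpha}\|\Delta_j f\|_{L^{p_1}}$ and $b_j:=2^{j\beta}\|\Delta_j g\|_{L^{p_2}}$, H\"older's inequality combined with Bernstein's estimate will give the pointwise bound
\begin{equation*}
2^{k(\alpha+\beta)}\|\Delta_k(f\circ g)\|_{L^p}\lesssim \sum_{j\ge k-N_0}2^{(k-j)(\alpha+\beta)}a_j b_j.
\end{equation*}
The hypothesis $\alpha+\beta>0$ makes $j\mapsto 2^{(k-j)(\alpha+\beta)}\mathbf{1}_{j\ge k-N_0}$ summable in $j-k$, so a discrete Young-type inequality will produce the $\ell^q$-bound by $\|a\|_{\ell^q}\|b\|_{\ell^\infty}\le\|f\|_{B^\alpha_{p_1,q}}\|g\|_{B^\beta_{p_2,q}}$. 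The step that deserves the most care is precisely this last convolution argument: since $a$ and $b$ lie only in $\ell^q$ a priori, Young's inequality cannot be applied directly to the product $a_j b_j$; I would either split the kernel symmetrically as $2^{(k-j)(\alpha+\beta)/2}\cdot 2^{(k-j)(\alpha+\beta)/2}$ before applying H\"older, or exchange the order of summation to reduce to an $\ell^\infty\cdot\ell^q$ estimate. Everything else in the proof is essentially bookkeeping.
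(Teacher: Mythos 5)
Your argument is correct and is exactly the standard Bony-decomposition proof found in the references the paper cites for this lemma ([Bon81], [GIP15, Lemma~2.1], [MW18, Proposition~A.7]); the paper itself gives no proof. One small repair: since $p_1\in[1,\infty]$ is allowed (and the case $p_1=\infty$ is used constantly in the paper), the uniform bound $\|S_{j-1}f\|_{L^{p_1}}\lesssim\|f\|_{L^{p_1}}$ should be justified by Young's inequality with the $L^1$-normalized convolution kernel of $S_{j-1}$ rather than by Mikhlin's theorem, which does not cover the endpoints $p_1\in\{1,\infty\}$; likewise Bernstein is not actually needed in the resonant estimate, as H\"older alone gives the exponent $p$ with $\tfrac1p=\tfrac1{p_1}+\tfrac1{p_2}$, and your final convolution step is fine since $a_jb_j$ lies in $\ell^q$ via $\ell^q\cdot\ell^\infty\subset\ell^q$ before convolving with the $\ell^1$ kernel $2^{m(\alpha+\beta)}\mathbf{1}_{m\le N_0}$.
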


We denote $\succcurlyeq\,=\circ\, +\!\succ$, $\preccurlyeq \,=\circ\, +\!\prec.$
The key tool of the paracontrolled calculus introduced in \cite{GIP15} is  the following commutator lemma from \cite[Lemma 2.4]{GIP15} (see also  \cite[Proposition~A.9]{MW18}).

\begin{lemma}\label{lem:com1}
	Assume that $\alpha\in (0,1)$ and $\beta,\gamma\in \R$ are such that $\alpha+\beta+\gamma>0$ and $\beta+\gamma<0$ and   $p,p_1,p_2\in [1,\infty]$ satisfy $\frac{1}{p}=\frac{1}{p_1}+\frac{1}{p_2}$. Then there exist a bounded trilinear  operator
	$$
	 \mathrm{com}(f,g,h):B^\alpha_{p_1,\infty}\times C^\beta\times B^\gamma_{p_2,\infty}\to B^{\alpha+\beta+\gamma}_{p,\infty}
	 $$ satisfying
	$$
	\|\mathrm{com}(f,g,h)\|_{B^{\alpha+\beta+\gamma}_{p,\infty}}\lesssim \|f\|_{B^\alpha_{p_1,\infty}}\|g\|_{C^\beta}\|h\|_{B^\gamma_{p_2,\infty}}
	$$
	such that for smooth functions $f,g,h$ it holds
	$$
	\mathrm{com}(f,g,h)=(f\prec g)\circ h - f(g\circ h).
	$$
\end{lemma}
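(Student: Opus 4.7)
I would first establish the claimed trilinear bound for smooth $f,g,h$, and then extend $\mathrm{com}$ to a bounded trilinear map on $B^{\alpha}_{p_1,\infty}\times C^{\beta}\times B^{\gamma}_{p_2,\infty}$ by a standard approximation argument via Littlewood--Paley truncations. The guiding heuristic is that $f\prec g$ retains the high-frequency structure of $g$ with $f$ frozen at a coarser scale, so that $(f\prec g)\circ h$ and $f\,(g\circ h)$ should differ only by a term that inherits the full regularity $\alpha$ of $f$, while the ill-defined object $f\circ(g\circ h)$ never appears.

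To realize this, I would unfold $f\prec g = \sum_{i}S_{i-1}f\,\Delta_{i}g$ with $S_{i-1}f=\sum_{\ell\leq i-2}\Delta_{\ell}f$. Each summand $S_{i-1}f\,\Delta_{i}g$ has Fourier support in an annulus of radius $\simeq 2^{i}$, hence only Littlewood--Paley blocks $\Delta_{j}$ with $|j-i|\leq C_{0}$ for some fixed $C_{0}$ see it. Expanding $(f\prec g)\circ h$ block-by-block and comparing with $f(g\circ h) = \sum_{j} f\,\Delta_{j}g\,\widetilde{\Delta}_{j}h$, where $\widetilde{\Delta}_{j}=\Delta_{j-1}+\Delta_{j}+\Delta_{j+1}$, leads after regrouping to
$$\mathrm{com}(f,g,h) = -\sum_{j}\sum_{i\geq j-1}\Delta_{i}f\,\Delta_{j}g\,\widetilde{\Delta}_{j}h \;+\; \mathrm{l.o.t.},$$
where the lower order terms arise from the mismatch between $\Delta_{j}(f\prec g)$ and $S_{j-1}f\,\Delta_{j}g$ and can be reduced to triple products of the same form.

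Each term $\Delta_{i}f\,\Delta_{j}g\,\widetilde{\Delta}_{j}h$ with $i\geq j-1$ is spectrally supported in a ball of radius $\lesssim 2^{i+C_{0}}$, so $\Delta_{k}$ annihilates it unless $i\geq k-C_{0}$. Using H\"older's inequality with exponents $p_1,\infty,p_2$ together with the Besov/H\"older bounds $\|\Delta_{i}f\|_{L^{p_1}}\lesssim 2^{-i\alpha}\|f\|_{B^{\alpha}_{p_1,\infty}}$, $\|\Delta_{j}g\|_{L^{\infty}}\lesssim 2^{-j\beta}\|g\|_{C^{\beta}}$, $\|\widetilde{\Delta}_{j}h\|_{L^{p_2}}\lesssim 2^{-j\gamma}\|h\|_{B^{\gamma}_{p_2,\infty}}$, I would obtain
$$\|\Delta_{k}\mathrm{com}(f,g,h)\|_{L^{p}}\lesssim\|f\|_{B^{\alpha}_{p_1,\infty}}\|g\|_{C^{\beta}}\|h\|_{B^{\gamma}_{p_2,\infty}}\sum_{j}\sum_{i\geq\max(j-1,\,k-C_{0})}2^{-i\alpha}\,2^{-j(\beta+\gamma)}.$$
The geometric series in $i$ converges thanks to $\alpha>0$, after which splitting the sum on $j$ at $j=k-C_{0}$ gives two regimes: on the tail $j\geq k-C_{0}$ I use $\alpha+\beta+\gamma>0$, on the head $j<k-C_{0}$ I use $\beta+\gamma<0$. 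Both regimes produce the uniform bound $2^{-k(\alpha+\beta+\gamma)}$, which is exactly the desired $B^{\alpha+\beta+\gamma}_{p,\infty}$ estimate.

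The main obstacle I foresee is the careful bookkeeping of the ``l.o.t.'' contributions: $\Delta_{j}(f\prec g)$ is not literally $S_{j-1}f\,\Delta_{j}g$ but rather $\sum_{|i-j|\leq C_{0}}\Delta_{j}(S_{i-1}f\,\Delta_{i}g)$, and an analogous offset occurs in the resonant pairing with $h$. These corrections must be rewritten as further triple products whose Fourier supports still lie in balls of radius $\lesssim 2^{\max(i,j)+O(1)}$, so that precisely the same two-regime summation in $i$ and $j$ applies. Once this is handled, the extension from smooth inputs to the claimed Besov spaces by approximating $f,g,h$ with $S_{N}f, S_{N}g, S_{N}h$ and passing to the limit in the uniform trilinear inequality is routine.
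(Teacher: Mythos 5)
The paper does not actually prove this lemma; it quotes it from \cite[Lemma 2.4]{GIP15} (see also \cite[Proposition~A.9]{MW18}), and your proposal is a reconstruction of that standard argument. Your treatment of the main term is correct and matches the literature proof: writing $f-S_{j-1}f=\sum_{i\ge j-1}\Delta_i f$ produces the triple sum $\sum_j\sum_{i\ge j-1}\Delta_i f\,\Delta_j g\,\widetilde\Delta_j h$, and your two-regime summation (tail $j\ge k$ via $\alpha+\beta+\gamma>0$, head $j<k$ via $\beta+\gamma<0$) is exactly how the uniform bound $2^{-k(\alpha+\beta+\gamma)}$ is obtained.

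The gap is in your ``l.o.t.'' You assert that the mismatch between $\widetilde\Delta_n(f\prec g)$ and $\sum_{|j-n|\le1}S_{j-1}f\,\Delta_j g$ ``can be reduced to triple products of the same form,'' so that the same block-by-block H\"older estimate applies. It cannot. The problematic piece is the block commutator $[\Delta_n,S_{j-1}f]\Delta_j g=\Delta_n(S_{j-1}f\,\Delta_j g)-S_{j-1}f\,\Delta_n\Delta_j g$ with $|n-j|\le C_0$: estimating its two summands separately by H\"older gives only $\|S_{j-1}f\|_{L^{p_1}}\|\Delta_j g\|_{L^\infty}\lesssim\|f\|_{L^{p_1}}2^{-j\beta}$, with no factor $2^{-j\alpha}$, and after pairing with $\Delta_n h$ the resulting bound $2^{-n(\beta+\gamma)}$ is not summable over $n\gtrsim k$ precisely because $\beta+\gamma<0$. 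One must exploit the cancellation inside the commutator via the kernel estimate $\|[\Delta_n,u]v\|_{L^p}\lesssim 2^{-n}\|\nabla u\|_{L^{p_1}}\|v\|_{L^{p_2}}$ (first-order Taylor expansion of $u$ against the convolution kernel of $\Delta_n$), which applied to $u=S_{j-1}f$ yields the missing gain, since $\|\nabla S_{j-1}f\|_{L^{p_1}}\lesssim 2^{j(1-\alpha)}\|f\|_{B^\alpha_{p_1,\infty}}$ and hence $\|[\Delta_n,S_{j-1}f]\Delta_j g\|_{L^{p_1}}\lesssim 2^{-j(\alpha+\beta)}\|f\|_{B^\alpha_{p_1,\infty}}\|g\|_{C^\beta}$. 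This is also the only place where the hypothesis $\alpha<1$ enters --- your argument never uses it, which is a reliable sign that a step is missing. Once this auxiliary commutator lemma is supplied, the rest of your scheme (Fourier-support bookkeeping, two-regime summation, extension by density) goes through.
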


We also recall the following two lemmas for the Helmholtz projection $\mathbb{P}$ from \cite[Lemma 3.5, Lemma 3.6]{ZZ15}.

\begin{lemma}\label{lem:com2}
	Assume that $\alpha\in (0,1), \beta\in \R$ and   $p\in [1,\infty]$. Then for every $k,l=1,2,3$
	$$
	\|[\mathbb{P}^{kl}, f\prec] g \|_{B^{\alpha+\beta}_{p,\infty}}\lesssim \|f\|_{B^\alpha_{p,\infty}}\|g\|_{C^\beta}.
	$$
\end{lemma}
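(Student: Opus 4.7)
The plan is to exploit that $\mathbb{P}^{kl}$ is a Fourier multiplier with symbol $m^{kl}(\xi)=\delta^{kl}-\xi_k\xi_l/|\xi|^2$, smooth away from the origin and homogeneous of degree $0$, and then to run a standard Littlewood--Paley commutator analysis in the same spirit as Lemma~\ref{lem:com1}. First I would expand the paraproduct into blocks and reduce the estimate to controlling each block of
\[
[\mathbb{P}^{kl}, f\prec]g = \sum_{j\geq -1}[\mathbb{P}^{kl}, S_{j-1}f]\Delta_j g =: \sum_j T_j,
\]
with the correct gain of $2^{-j(\alpha+\beta)}$.

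The key observation is that both $S_{j-1}f\cdot \Delta_j g$ and $\Delta_j g$ have (for $j$ large) Fourier support in a common annulus $\{|\xi|\sim 2^j\}$. Choosing a smooth cutoff $\tilde\varphi$ supported in a slightly bigger annulus and equal to $1$ on both supports, the homogeneity of $m^{kl}$ gives the identity $\tilde\varphi(2^{-j}\cdot)\,m^{kl}=(\tilde\varphi\, m^{kl})(2^{-j}\cdot)$, so that $\mathbb{P}^{kl}$ acts on both functions as convolution with the same rescaled Schwartz kernel $K_j(y)=2^{3j}K(2^j y)$, where $K:=\mathcal F^{-1}(\tilde\varphi\, m^{kl})$ is fixed. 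This yields the pointwise identity
\[
T_j(x)=\int K_j(y)\bigl[S_{j-1}f(x-y)-S_{j-1}f(x)\bigr]\Delta_j g(x-y)\,\mathrm d y,
\]
which exhibits the crucial cancellation. The finitely many low-frequency blocks, for which the spectral localization argument breaks down, are harmless and handled separately by the trivial $L^p\times L^\infty$ bound.

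Next I would estimate each $T_j$ by Minkowski's inequality combined with three standard ingredients: the characterization $\|S_{j-1}f(\cdot-y)-S_{j-1}f(\cdot)\|_{L^p}\lesssim |y|^\alpha\|f\|_{B^\alpha_{p,\infty}}$, which is valid for $\alpha\in(0,1)$ via the translation-difference description of $B^\alpha_{p,\infty}$ together with boundedness of $S_{j-1}$; the bound $\|\Delta_j g\|_{L^\infty}\lesssim 2^{-j\beta}\|g\|_{C^\beta}$; and the scaling $\int |K_j(y)||y|^\alpha\,\mathrm d y\simeq 2^{-j\alpha}$ inherited from $K\in\mathcal S$. Together these yield
\[
\|T_j\|_{L^p}\lesssim 2^{-j(\alpha+\beta)}\|f\|_{B^\alpha_{p,\infty}}\|g\|_{C^\beta}.
\]
Since $T_j$ has Fourier support in $\{|\xi|\sim 2^j\}$, only $O(1)$ values of $j$ contribute to $\Delta_k\sum_j T_j$, and summing gives $\|\Delta_k\sum_j T_j\|_{L^p}\lesssim 2^{-k(\alpha+\beta)}\|f\|_{B^\alpha_{p,\infty}}\|g\|_{C^\beta}$, which is exactly the claimed $B^{\alpha+\beta}_{p,\infty}$ bound.

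The only genuinely delicate point is the uniform-in-$j$ construction of the rescaled kernel $K_j$, where homogeneity of $m^{kl}$ is essential; once that is in place, the rest is routine Besov analysis and the restriction $\alpha\in(0,1)$ enters exactly through the Hölder-type translation estimate on $S_{j-1}f$.
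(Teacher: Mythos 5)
The paper does not prove this lemma; it is quoted directly from \cite{ZZ15} (Lemma 3.5 there), where the argument is precisely the one you give: block decomposition of the paraproduct, the rescaled-kernel representation of $\mathbb{P}^{kl}$ on each annulus via homogeneity of the symbol, the cancellation identity for the commutator, and the H\"older-type translation bound on $S_{j-1}f$ that uses $\alpha\in(0,1)$. Your proposal is correct and matches that standard proof, including the correct treatment of the finitely many low-frequency blocks and the spectral localization of each $T_j$ needed to conclude the $B^{\alpha+\beta}_{p,\infty}$ bound.
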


\begin{lemma}\label{lem:Leray}
	Assume that $\alpha\in  \R$ and   $p\in [1,\infty]$. Then for every $k,l=1,2,3$
	$$
	\|\mathbb{P}^{kl}f \|_{B^{\alpha}_{p,\infty}}\lesssim \|f\|_{B^\alpha_{p,\infty}}.
	$$
\end{lemma}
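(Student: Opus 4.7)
The plan is to recognize $\mathbb{P}^{kl}$ as a Fourier multiplier of order zero and then verify the stated estimate blockwise in the Littlewood--Paley decomposition. Concretely, on $\mathbb{T}^{3}$ the Helmholtz projector acts in Fourier variables as
\begin{equation*}
\widehat{\mathbb{P}^{kl}f}(\xi)=m^{kl}(\xi)\hat f(\xi),\qquad m^{kl}(\xi)=\delta_{kl}-\frac{\xi_{k}\xi_{l}}{|\xi|^{2}}\quad (\xi\neq 0),
\end{equation*}
with the convention $m^{kl}(0)=0$ removing the zero Fourier mode. The symbol $m^{kl}$ is homogeneous of degree $0$ and smooth on $\mathbb{R}^{3}\setminus\{0\}$, hence for every multi-index $\gamma$ one has $|\partial^{\gamma}m^{kl}(\xi)|\lesssim |\xi|^{-|\gamma|}$ away from the origin.

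The first step is to establish the uniform blockwise bound
\begin{equation*}
\|\Delta_{j}\mathbb{P}^{kl}f\|_{L^{p}}\lesssim \|\Delta_{j}f\|_{L^{p}},\qquad j\geq -1,
\end{equation*}
with constant independent of $j$. For $j\geq 0$ the block $\Delta_{j}$ is supported in a dyadic annulus $\{|\xi|\simeq 2^{j}\}$ on which $m^{kl}$ is smooth and of order zero; standard convolution kernel estimates for smooth compactly supported multipliers (or directly Young's inequality after rescaling the kernel $K_{j}(x)=2^{3j}K(2^{j}x)$ associated with a fixed annular cut-off of $m^{kl}$) yield the desired $L^{p}$-bound with a constant depending only on $m^{kl}$ and on the partition of unity. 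For $j=-1$ the block $\Delta_{-1}f$ involves only finitely many low Fourier modes; since $m^{kl}$ is bounded on each nonzero mode and vanishes at $\xi=0$, the map $\Delta_{-1}f\mapsto \Delta_{-1}\mathbb{P}^{kl}f$ is a bounded operator between finite-dimensional spaces, so the required $L^{p}$-bound is trivial there.

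The second step is to pass from the blockwise estimate to the Besov norm. Since $\mathbb{P}^{kl}$ is a Fourier multiplier, it commutes with $\Delta_{j}$, so multiplying the pointwise bound by $2^{j\alpha}$ and taking the supremum over $j\geq -1$ gives
\begin{equation*}
\|\mathbb{P}^{kl}f\|_{B^{\alpha}_{p,\infty}}=\sup_{j\geq -1}2^{j\alpha}\|\Delta_{j}\mathbb{P}^{kl}f\|_{L^{p}}\lesssim \sup_{j\geq -1}2^{j\alpha}\|\Delta_{j}f\|_{L^{p}}=\|f\|_{B^{\alpha}_{p,\infty}},
\end{equation*}
which is the claim. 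The only delicate point, and the one I would spell out carefully, is the kernel estimate for the low-frequency block: because $m^{kl}$ is not continuous at $\xi=0$, one cannot multiply $m^{kl}$ by a smooth bump supported near the origin and get a Schwartz-class kernel directly. The cleanest workaround is to use that the Littlewood--Paley cut-off underlying $\Delta_{-1}$ already removes the singularity (the block sees only finitely many integer frequencies on the torus), so one simply estimates $\|\Delta_{-1}\mathbb{P}^{kl}f\|_{L^{p}}\leq \sum_{\xi\in\mathbb{Z}^{3}\setminus\{0\},|\xi|\lesssim 1}|m^{kl}(\xi)|\,|\hat f(\xi)|\lesssim \|\Delta_{-1}f\|_{L^{1}}\lesssim \|\Delta_{-1}f\|_{L^{p}}$, using Bernstein's inequality in the last step.
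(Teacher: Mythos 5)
Your argument is correct and is the standard one: $\mathbb{P}^{kl}$ is a Fourier multiplier with symbol smooth and homogeneous of degree zero away from the origin, so each Littlewood--Paley block is bounded on $L^{p}$ uniformly in $j$ (with the low block handled trivially on the torus since only finitely many discrete frequencies occur), and the $B^{\alpha}_{p,\infty}$ bound follows by commuting $\mathbb{P}^{kl}$ with $\Delta_{j}$. The paper does not prove this lemma but cites it from [ZZ15, Lemma~3.6], where it is established by exactly this zero-order multiplier argument; your write-up fills in the details faithfully (the only cosmetic quibble being that the final step $\|\Delta_{-1}f\|_{L^{1}}\lesssim\|\Delta_{-1}f\|_{L^{p}}$ is H\"older's inequality on the finite-measure torus rather than Bernstein's inequality).
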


Analogously to the  the real-valued case, we may define paraproducts for vector-valued distributions. More precisely, for two vector-valued distributions $f,g\in \mathcal{S}'(\mathbb{T}^{d};\mathbb{R}^{m})$, we use the following tensor paraproduct notation
$$
\begin{aligned}
f\otimes g = (f_{i} g_{j})_{i,j=1}^{m}& =f\oprec g+ f\varocircle g +f\osucc g \\
&= (f_{i}\prec g_{j})_{i,j=1}^{m}+(f_{i}\circ g_{j})_{i,j=1}^{m}+(f_{i}\succ g_{j})_{i,j=1}^{m}
\end{aligned}
$$
and note that Lemma~\ref{lem:para} carries over mutatis mutandis. We also denote
$$
\osucccurlyeq=\varocircle +\osucc,\qquad\opreccurlyeq =\varocircle +\oprec.
$$
When there is no danger of confusion, we apply the simple
paraproducts also within matrix-vector multiplication, i.e. for $f \in
\mathcal{S}' (\mathbb{T}^d ; \mathbb{R}^{m \times m})$ and $g \in \mathcal{S}'
(\mathbb{T}^d ; \mathbb{R}^m)$ we define using the Einstein summation convention
\[ (f \succ g)_{i=1}^{m} = (g \prec f)_{i=1}^{m} = \left( f^{i j} \succ g^j\right)_{i=1}^{m}, \qquad (f \circ g)_{i=1}^{m} =
   (g \circ f)_{i=1}^{m} = \left( f^{i j} \circ g^j \right)_{i=1}^{m}. \]
Similarly to Lemma~\ref{lem:com1}, we may also define a matrix-valued commutator as  continuous extensions of
\[ (f,g,h)\mapsto (\tmop{com} (f, g, h))_{i,j=1}^{m} = (f \prec g) \varocircle h - f\cdot (g
   \varocircle h) = (f^k \prec g^{i k}) \circ h^j - f^k (g^{i k} \circ h^j) ,\]
\[ (f,g,h)\mapsto(\tmop{com}^{\ast} (f, g, h))_{i,j=1}^{m} = h \varocircle (f \prec g) - (h
   \varocircle g) \cdot f = h^i \circ (f^k \prec g^{j k}) - (h^i \circ g^{j k}) f^k,
\]
which is well-defined for smooth functions $f, h :\mathbb{T}^d \to \mathbb{R}^m$, $g :\mathbb{T}^d \to \mathbb{R}^{m \times m}$ and takes values in $\mathcal{S}' (\mathbb{T}^d ; \mathbb{R}^{m \times m})$. A counterpart of the bound in Lemma~\ref{lem:com1} holds true in this setting as well.

Finally, we introduce  localizers in terms of Littlewood--Paley expansions. Let $J\in\mathbb{N}_{0}$. For $f\in\mathcal{S}'(\mathbb{T}^{d})$ we define
$$
\begin{aligned}
\Delta_{> J}f=\sum_{j> J}\Delta_{j}f,\qquad \Delta_{\leq J}f=\sum_{j\leq J}\Delta_{j}f.
\end{aligned}
$$
Then it holds in particular for $\alpha\leq \beta\leq\gamma$
\begin{equation}\label{eq:loc}
\begin{aligned}
\|\Delta_{> J}f\|_{C^{\alpha}}\lesssim 2^{-J(\beta-\alpha)}\|f\|_{C^{\beta}},\qquad
\|\Delta_{\leq J}f\|_{C^{\gamma}}\lesssim 2^{J(\gamma-\beta)}\|f\|_{C^{\beta}}.
\end{aligned}
\end{equation}

\section{More regular stochastic perturbations}
\label{s:reg}

In our previous works {\cite{HZZ19,HZZ21}} we considered the case of a trace
class noise. We proved non-uniqueness in law as well as non-uniqueness of
Markov solutions and existence and non-uniqueness of global-in-time
probabilistically strong solutions. These results apply to the
Navier--Stokes system
\begin{equation}
  \mathd u + \tmop{div} (u \otimes u) \mathd t + \nabla p \mathd t = \Delta u
  \mathd t + (- \Delta)^{- \gamma / 2} \mathd B, \qquad \tmop{div} u = 0,
  \label{eq:gamma}
\end{equation}
where $B=(B^1,B^2,B^3)$ is a vector-valued $L^2$-cylindrical Wiener process on some stochastic basis $(\Omega,
\mathcal{F}, (\cF_t)_{t\geq0}, \tmmathbf{P})$ and $\gamma > 3 / 2$. Here the filtration $(\cF_t)_{t\geq0}$ is the normal filtration generated by the Wiener process $B$.  As a main result of the
present paper, we treat the case of space-time white noise, i.e. $\gamma = 0$.
But already the more regular case of $\gamma \in (0, 3 / 2]$ presents
interesting new challenges. In this section we want to outline some of the
main ideas on examples of these more regular noises.

If the noise is not trace class, It{\^o}'s formula cannot be applied in order
to obtain an energy inequality. The case of $\gamma = 3 / 2$ is therefore the
treshold where (and below which) stochastic counterparts of Leray solutions on the level of $u$
no longer make sense.  Furthermore, the expected regularity of solutions depends on
the regularity of the noise. This can be seen by looking at the linear
counterpart of \eqref{eq:gamma}
\begin{equation}
  \mathd z + \nabla p^z \mathd t = \Delta z \mathd t + (- \Delta)^{- \gamma /
  2} \mathd B, \qquad \tmop{div} z = 0, \label{eq:zz}
\end{equation}
and realizing that $z \in C_T C^{\gamma - 1 / 2
- \kappa} \cap C^{1 / 2 - \delta}_T C^{\gamma - 3 / 2 + 2 \delta -
\kappa}$ $\mathbf{P}$-a.s. for $\kappa, \delta > 0$ small. This can be obtained by Schauder estimates from the fact that  the space-time white noise $\mathrm{d} B/\mathrm{d} t$ is a random distribution of space-time regularity $-5/2-\kappa$ for $\kappa>0$  with parabolic scaling.  Thus, if $\gamma \leq 1 / 2$ the
solution is not even function-valued and the quadratic nonlinearity in
\eqref{eq:gamma} is not well-defined analytically. Nevertheless,   one can use
probability theory and renormalization to define this nonlinearity using
Wick products.

We distinguish the following cases with an increasing level of difficulty:
\begin{enumerate}
  \item $\gamma \in (1 / 2, 3 / 2]$: solutions are function-valued hence the
  convective term is well-defined but the energy inequality for $u$ cannot be
  obtained.

  \item $\gamma \in (1 / 6, 1 / 2]$: solutions become distribution-valued,
  renormalization is needed to define the product.

  \item $\gamma \in (0, 1 / 6]$: further decomposition is required in order to
  make sense of all the required products.

  \item $\gamma=0$: the case of space-time white noise, a so-called paracontrolled ansatz is required and we present a detailed proof in  subsequent sections.
\end{enumerate}

We stress that while global existence of martingale (i.e. probabilistically weak) solutions in the case $\gamma \in (1 / 2, 3 / 2]$ can be obtained by compactness and Skorokhod representation theorem, existence of probabilistically strong solutions was unknown. For $\gamma\leq 1/2$ even global existence of martingale solutions was an open problem.

In the remainder of this section, we focus on the first three regimes and explain how
to make them amenable to convex integration and hence to global-in-time
existence and non-uniqueness results.

\subsection{The case of $\gamma \in (1 / 2, 3 / 2]$}
\label{s:3.1}

Even though the energy inequality cannot be computed here, the approach of
{\cite{HZZ19,HZZ21}} can be applied with minimal modifications. In particular,
one uses the decomposition $u = z + v$ where $z$ solves \eqref{eq:zz} and
\begin{equation}
  \mathcal{L}v + \tmop{div} ((v + z) \otimes (v + z)) + \nabla p^v = 0, \qquad
  \tmop{div} v = 0. \label{eq:vv}
\end{equation}
Here and in the sequel, we use the notation $\mathcal{L}= \partial_t -
\Delta$. The convex integration scheme is then given via the iteration system
\[ \mathcal{L}v_q + \tmop{div} ((v_q + z_q) \otimes (v_q + z_q)) + \nabla p^q
   = \tmop{div} \mathring{R}_q, \qquad \tmop{div} v_q = 0, \]
where $z_q = \Delta_{\leqslant f (q)} z$ with $\Delta_{\leqslant f (q)}$ being
a  cut-off of the Littlewood--Paley expansion. With a suitable
definition of the stopping times, this permits to add noise scale by scale as
one proceeds through the iteration. Setting the convex integration up with an
initial iteration $v_0$ as in {\cite{HZZ19}}, one can prove that the
constructed solution $v$ is distinct from the Leray solution to \eqref{eq:vv}
starting from the same initial value. This implies existence of an initial
value so that non-uniqueness holds for \eqref{eq:gamma} as well. Using the
probabilistic extension of solutions from {\cite{HZZ19}}, non-uniqueness in
law on any time interval $[0, T]$, $T > 0$, follows. Following the ideas of
{\cite{HZZ21}} it is also possible to construct solutions with a prescribed
$L^2$ initial condition and to obtain existence and non-uniqueness of
global-in-time probabilistically strong solutions.

\subsection{The case of $\gamma \in (1 / 6, 1 / 2]$}

This case is more delicate and further decomposition is required. More
precisely, in addition to $u = z + v$ as above, we split $v$ into its
irregular and regular part, i.e. $v = v^1 + v^2$. The equation for $v^1$
contains all the irregular terms of the product $(v + z) \otimes (v + z)$,
whereas the regular ones are put in $v^2$. Additionally, the equation for $v^{1}$ shall be linear so that it can be solved by a fixed point argument.
As a rule of thumb, we color the
irregular terms \tmcolor{magenta}{magenta} and the regular ones
\tmcolor{blue}{blue}. The decomposition can be done as follows. The product $z
\otimes z$ needs to be constructed by renormalization as a Wick product denoted by $z^{\<2>}$ and it is of spatial
regularity $C^{2 (\gamma - 1 / 2) - \kappa}$. For the moment, we ignore this fact and proceed formally. We come back to the rigorous definition of the stochastic objects in Section~\ref{s:stoch}.

So we have  the first magenta term
$\tmcolor{magenta}{z \otimes z}$. Then we write with the help of paraproducts
and Littlewood--Paley projectors
\[ (v^1 + v^2) \otimes z = \tmcolor{magenta}{(v^1 + v^2) \oprec \Delta_{> R}
   z} \tmcolor{blue}{+ (v^1 + v^2) \osucccurlyeq \Delta_{> R} z}
   \tmcolor{blue}{+ (v^1 + v^2) \otimes \Delta_{\leqslant R} z}, \]
and treat the symmetric term $z\otimes (v^{1} +v^{2})$ the same way. Here, $\osucccurlyeq=\varocircle +\osucc $ and we included a suitable
cut-off $R$ to be chosen appropriately. This eventually simplifies the fixed point argument used to establish, for a given convex integration iteration $v^{2}_{q}$, the  existence and uniqueness of $v^{1}_{q}$. 
Finally, we let $\tmcolor{blue}{(v^1 + v^2) \otimes (v^1
+ v^2)}$. This leads to
\[ \mathcal{L}v^1 + \tmop{div} \left( \tmcolor{magenta}{z \otimes z + (v^1 +
   v^2) \oprec \Delta_{> R} z + \Delta_{> R} z \osucc (v^1 + v^2)} \right) +
   \nabla p^1 = 0, \qquad \tmop{div} v^1 = 0, \]
\[ \mathcal{L}v^2 + \tmop{div} \left( \tmcolor{blue}{(v^1 + v^2)\osucccurlyeq
   \Delta_{> R} z + \Delta_{> R} z \opreccurlyeq (v^1 + v^2) + (v^1 + v^2)
   \otimes \Delta_{\leqslant R} z} \right) \]
\[ + \tmop{div} (\tmcolor{blue}{\Delta_{\leq R} z \otimes (v^1 + v^2) + (v^1 +
   v^2) \otimes (v^1 + v^2)}) + \nabla p^2 = 0, \qquad \tmop{div} v^2 = 0. \]

We set up a convex integration scheme as an approximation of the above system
of equations for $v^1$ and $v^2$. In particular, we include further
Littlewood--Paley projectors and let
\[ \mathcal{L}v_q^1 + \tmop{div} \left( \tmcolor{magenta}{z \otimes z + (v_q^1
   + v_q^2) \oprec \Delta_{\leqslant f (q)} \Delta_{> R} z + \Delta_{\leqslant
   f (q)} \Delta_{> R} z \osucc (v_q^1 + v_q^2)} \right) + \nabla p_q^1 = 0,
   \qquad \tmop{div} v_q^1 = 0, \]
\[ \mathcal{L}v_q^2 + \tmop{div} \left( \tmcolor{blue}{(v_q^1 + v_q^2)
 \osucccurlyeq \Delta_{> R} z + \Delta_{> R} z \opreccurlyeq (v_q^1 + v_q^2)
   + (v_q^1 + v_q^2) \otimes \Delta_{\leqslant R} z} \right) \]
\[ + \tmop{div} (\tmcolor{blue}{\Delta_{\leq R} z \otimes (v_q^1 + v_q^2) +
   (v_q^1 + v_q^2) \otimes (v_q^1 + v_q^2)}) + \nabla p_q^2 = \tmop{div} \mathring{R}_q,
   \qquad \tmop{div} v_q^2 = 0. \]
Note that the Reynolds stress $\mathring{R}_q$ is only included in the equation for
$v^2_q$.  Indeed, $v^{2}_{q}$ contains the quadratic nonlinearity which is used in the convex integration to reduce the stress. 

At each iteration step $q + 1$, we first use the previous stress $\mathring{R}_q$ in
order to define the principle part of the corrector $w^{(p)}_{q + 1}$, the
incompressibility corrector $w^{(c)}_{q + 1}$ and the time corrector
$w^{(t)}_{q + 1}$ in terms of the intermittent jets, see Appendix~\ref{s:B}. This gives  the next
iteration $v^2_{q + 1}$ and consequently we obtain $v^1_{q + 1}$ by a fixed
point argument, cf. Figure~\ref{f:1}. The localizers $\Delta_{\leqslant f (q)}$ in the equation of
$v^1_q$ are used to control the blow up of a certain norm of $v^1_q$ as $q
\rightarrow \infty$.

\subsection{The case of $\gamma \in (0, 1 / 6]$}

In this regime, also the resonant product $v^1 \varocircle z$ becomes
problematic. This can be overcome by introducing an additional stochastic
object which permits to cancel the worst term, i.e. $z \otimes z$, from the
equation for $v^1$. To be precise, let
\begin{equation}
  \mathcal{L}z_1 + \tmop{div} (z \otimes z) + \nabla p^{z_1} = 0, \qquad
  \tmop{div} z_1 = 0, \label{eq:z1}
\end{equation}
and define $u = z + v = z + z_1 + v^1 + v^2$.
Recall that in the rigorous analysis $z \otimes z$ needs to be replaced by the Wick product $z^{\<2>}$ of regularity $C^{2(\gamma-1/2)-\kappa}$. Consequently, $z_{1}$ then becomes our second stochastic object, later denoted as $z^{\<20>}$ with regularity $C^{2\gamma-\kappa}$. In addition,  also the products $z_{1}\otimes z$ and $z\otimes z_{1}$ need to be defined via renormalization as $z^{\<21>}$ and $z^{\<21l>}$, respectively. We again ignore this fact for a moment and continue with the formal decomposition. The reader is referred to Section~\ref{s:stoch} below for more details on the stochastic construction.

Proceeding as above, this leads
to
\[ \mathcal{L}v^1 + \tmop{div} \left( \tmcolor{magenta}{z_1 \otimes z + z
   \otimes z_1 + z_1 \otimes z_1 + (v^1 + v^2) \oprec \Delta_{> R} z +
   \Delta_{> R} z \osucc (v^1 + v^2)} \right) + \nabla p^1 = 0, \]
\[ \qquad \tmop{div} v^1 = 0, \]
\[ \mathcal{L}v^2 + \tmop{div} \left( \tmcolor{blue}{(v^1 + v^2) \osucccurlyeq
   \Delta_{> R} z + \Delta_{> R} z \opreccurlyeq (v^1 + v^2) + (v^1 + v^2)
   \otimes (\Delta_{\leqslant R} z + z_1)} \right) \]
\[ + \tmop{div} (\tmcolor{blue}{(z_1 + \Delta_{\leq R} z) \otimes (v^1 + v^2) +
   (v^1 + v^2) \otimes (v^1 + v^2)}) + \nabla p^2 = 0, \qquad \tmop{div} v^2 =
   0, \]
 together with the convex integration scheme
\[ \mathcal{L}v_q^1 + \tmop{div} \left( \tmcolor{magenta}{z_1 \otimes z + z
   \otimes z_1 + z_1 \otimes z_1 + (v_q^1 + v_q^2) \oprec \Delta_{\leqslant f
   (q)} \Delta_{> R} z + \Delta_{\leqslant f (q)} \Delta_{> R} z \osucc (v_q^1
   + v_q^2)} \right) \]
\[ + \nabla p_q^1 = 0, \qquad \tmop{div} v_q^1 = 0, \]
\[ \mathcal{L}v_q^2 + \tmop{div} \left( \tmcolor{blue}{(v_q^1 + v_q^2)
   \osucccurlyeq \Delta_{> R} z + \Delta_{> R} z \opreccurlyeq (v_q^1 + v_q^2)
   + (v_q^1 + v_q^2) \otimes (\Delta_{\leqslant R} z + z_1)} \right) \]
\[ + \tmop{div} (\tmcolor{blue}{(\Delta_{\leqslant R} z + z_1) \otimes (v_q^1
   + v_q^2) + (v_q^1 + v_q^2) \otimes (v_q^1 + v_q^2)}) + \nabla p_q^2 =
   \tmop{div} \mathring{R}_q, \qquad \tmop{div} v_q^2 = 0. \]

With this definition, one can obtain existence of an initial condition for
which there are non-unique solutions before a stopping time. Since there are no global Leray solutions, for global existence it is necessary to extend these solutions by other convex integration solutions. To this end, an improved convex integration construction is needed, which gives the result for any
prescribed initial condition in $L^2$. In particular, the term
$$
v^{1}\osucccurlyeq\Delta_{>R}z
$$
and its symmetric counterpart appearing in the equation for $v^{2}$ require regularity of $v^{1}$. Accordingly, this  introduces blow-up norms in time to overcome the singularity at $t=0$ into the convex integration, which is not convenient. We therefore refine the decomposition above by writing
$$
(v^1 + v^2) \osucccurlyeq
   \Delta_{> R} z = \tmcolor{magenta}{ v^1  \osucccurlyeq
   \Delta_{> R} z } \tmcolor{blue}{+  v^2 \osucccurlyeq
   \Delta_{> R} z}.
   $$
   We  include these terms into the equations for $v^{1}$, $v^{2}$ accordingly respecting the colors. Then in the equations for $v^{1}_{q}$ and $v^{2}_{q}$ we rewrite
 $$
 (v_q^1 + v_q^2)
   \osucccurlyeq \Delta_{> R} z = \tmcolor{magenta}{ v_{q}^1  \osucccurlyeq
  \Delta_{> R} z } \tmcolor{blue}{+  v_{q}^2 \osucccurlyeq
   \Delta_{> R} z}.
   $$
The symmetric terms are treated the same way.

\section{Paracontrolled solutions}\label{s:pa}

It turns out that even further expansion would not help in order to treat the
case of space-time white noise, i.e. $\gamma = 0$. Indeed, there would always
be  ill-defined products. As understood in the field of singular SPDEs, a
paracontrolled ansatz needs to be included. It  postulates a   particular
structure of solutions and permits to make sense of the analytically ill-defined products using probabilistic tools.
In the sequel, we first introduce the stochastic objects needed for the rigorous formulation of the Navier--Stokes system \eqref{main1}. Then we formulate the notion of paracontrolled solution  incorporating the paracontrolled ansatz. Finally, we  give a formal decomposition combined with paracontrolled ansatz in the spirit of Section~\ref{s:reg}.

\subsection{Stochastic objects}\label{s:stoch}

Let us recall that due to Theorem 1.1 {\cite{ZZ15}}, the equation
\eqref{eq:gamma} with $\gamma = 0$ is locally well-posed for initial
conditions in $C^{\eta}$ for $\eta \in (- 1, - 1 / 2)$. The solution $u$
belongs to $C ([0, \sigma) ; C^{\eta})$ where $\sigma$ is a strictly positive
stopping time so that
\[ \| u_{\varepsilon} - u \|_{C_{\sigma} C^{\eta}} \rightarrow 0 \]
in probability. Here, $u_{\varepsilon}$ denotes the solution to the
regularized Navier--Stokes system
\[ \partial_t u_{\varepsilon} + \tmop{div} (u_{\varepsilon} \otimes
   u_{\varepsilon}) + \nabla p_{\varepsilon} = \Delta u_{\varepsilon} +
   \zeta_{\varepsilon}, \qquad \tmop{div} u_{\varepsilon} = 0, \]
where $\zeta_{\varepsilon}$ is a mollification of the space-time white noise
$\zeta = \mathd B / \mathd t$. In particular, the stochastic objects needed in
	our proof here were constructed in {\cite{ZZ15}}.
	
	 To summarize the main ideas, let
	$z_{\varepsilon}$ be the stationary solution to
	\[ \partial_t z_{\varepsilon} + \nabla p^{z_{\varepsilon}} = \Delta
	z_{\varepsilon} + \zeta_{\varepsilon}, \qquad \tmop{div} z_{\varepsilon} =
	0. \]
	Then $z_{\varepsilon} \rightarrow z$ in $L^p \left( \Omega ; C_T {C^{- 1 / 2 -
			\kappa}}  \right)$ for every $p \in [1, \infty)$. Moreover, the renormalized
	product $z^{\<2>} $ can be defined as a Wick product in the sense that there
	exist diverging constants $C_{\varepsilon} \in \mathbb{R}^{3 \times 3}$,
$C^{i j}_{\varepsilon} \rightarrow \infty$, so that
	\[ z^{\<2>}_\eps:=z_{\varepsilon} \otimes z_{\varepsilon} - C_{\varepsilon} \]
	has a well-defined limit in $L^p \left( \Omega ; C_T {C^{- 1 - \kappa}}
	\right)$ for every $p \in [1, \infty)$. In fact, $C_{\varepsilon}=\E[ z_{\varepsilon} \otimes z_{\varepsilon}]$.  We also introduce  the following stochastic objects. Let
	\begin{align*}
	\partial_t z_{\varepsilon}^{\<20>}  - \Delta
	z_{\varepsilon}^{\<20>} =-\mathbb{P}\div(z_\eps^{\<2>}), \qquad \tmop{div} z_{\varepsilon}^{\<20>} =
	0,\qquad z_{\varepsilon}^{\<20>}(0)=0,
	\end{align*}
	with $\mP$ being the Leray projection operator, and  define
	\begin{align*}
	z^{\<21>}_\eps&:=z^{\<20>}_\eps\otimes z_\eps,\qquad z^{\<21l>}_\eps:= z_\eps \otimes z^{\<20>}_\eps,\qquad z^{\<101>}_\eps:=\mathbb{P}\cI(\nabla z_\eps)\varocircle z_\eps=\left(\mathbb{P}\cI(\nabla z_\eps)^{ik}\circ z^{j}_\eps\right)_{i,j,k=1}^{3},
	\\z^{\<211>}_\eps&:=\mathbb{P}\cI\div(z^{\<21>}_\eps),\qquad z^{\<21l1>}_\eps:=\mathbb{P}\cI\div(z^{\<21>}_\eps),
	\\z^{\<2020>}_\eps&:=z^{\<20>}_\eps\otimes z^{\<20>}_\eps-C_{1,\eps}(t),\qquad z^{\<2111>}_\eps:= z^{\<211>}_\eps\varocircle z_\eps+z^{\<21l1>}_\eps\varocircle z_\eps-C_{2,\eps}(t),
	\end{align*}
where $\mathcal{I}f(t)=\int_{0}^{t}e^{(t-r)\Delta}f(r)\dif r$	and
$$
C_{1,\eps}(t)=\E[(z^{\<20>}_\eps\otimes z^{\<20>}_\eps)(t,0)]\to \infty,\qquad C_{2,\eps}(t)=\E[(z^{\<211>}_\eps\varocircle z_\eps)(t,0)]+\E [(z^{\<21l1>}_\eps\varocircle z_\eps)(t,0)] \to \infty
$$
as $\varepsilon\to 0$.

For connecting the solutions in Section \ref{s:con} we also introduce  two stochastic objects
$z_\eps^{\<2111>}(r;s)$ and $z^{\<101>}(r;s)$. They are defined the same way as $z_\eps^{\<2111>}$ and $z^{\<101>}$ but replacing the last integration operator $\cI$ by $\cI_{s,r}=\int_s^re^{(r-l)\Delta}\dif l$ and the renormalized diverging constants $C_{i,\eps}(r;s)$ are defined as the expectation of the corresponding terms for $i=1,2$.

		We recall the following result from \cite{ZZ15}.
	\bp\label{p:sto}
For every $\kappa>0$ and some $0<\delta<1/4$,  there exist random distributions
\begin{align}\label{e:defZ}
\mathbb{Z}:=(z,z^{\<2>},z^{\<20>}, z^{\<21>}, z^{\<21l>}, z^{\<2020>}, z^{\<2111>},z^{\<101>})
\end{align}
such that
if $\tau_{\varepsilon}$ is a component in
\begin{align*}
\mathbb{Z}_{\varepsilon}:=(z_{\varepsilon},z^{\<2>}_{\varepsilon},z^{\<20>}_{\varepsilon}, z^{\<21>}_{\varepsilon}, z^{\<21l>}_{\varepsilon}, z^{\<2020>}_{\varepsilon}, z^{\<2111>}_{\varepsilon},z^{\<101>}_{\varepsilon})
\end{align*}
and $\tau$ is
the corresponding component  in
$\mathbb{Z}$  then
$\tau_{\varepsilon }\rightarrow \tau$ in $ C_TC^{\alpha_{\tau}}\cap C_T^{\delta/2}C^{\alpha_{\tau}-\delta}$ a.s. as $\varepsilon\to0$, where the regularities $\alpha_{\tau}$ are summarized in Table~\ref{t:reg}.
Furthermore, for every $p\in[1,\infty)$
\begin{align*}
\sup_{\eps\geq0}\E\|\tau_\eps\|_{C_TC^{\alpha_{\tau}}}^p+\sup_{\eps\geq0}\E\|\tau_\eps\|_{C_T^{\delta/2}C^{\alpha_{\tau}-\delta}}^p
&\lesssim1.
\end{align*}
For $\tau_\eps(r;s)=z_{\eps}^{\<101>}(r;s)$ and $\tau_\eps(r;s)=z_{\eps}^{\<2111>}(r;s)$ there exist random distributions $\tau(r;s)=z^{\<101>}(r;s)$ and $\tau(r;s)=z^{\<2111>}(r;s)$ such that
\begin{align*}
\sup_{\eps\geq0}\E\sup_{0\leq s\leq r\leq T}\|\tau_\eps\|_{C^{\alpha_{\tau}}}^p
&\lesssim1,
\end{align*}
and \begin{align*}
\sup_{\eps\geq0}\E\sup_{0\leq s\leq r\leq T}\|\tau_\eps-\tau\|_{C^{\alpha_{\tau}}}^p
&\to0,
\end{align*}
as $\eps\to0$.
	\ep
	
		\begin{table}[ht]
	  \begin{center}
	\[ \begin{array}{|c|c|c|c|c|c|c|c|c|}
	\hline
	\tau & z & z^{\<20>} & z^{\<2>}  & z^{\<21>} & z^{\<21l>}& z^{\<2020>}  &
	z^{\<2111>} & z^{\<101>}\\
	\hline
	\alpha_{\tau} & - \frac{1}{2} - \kappa & - \kappa & - 1 - \kappa &
	- \frac{1}{2} - \kappa &- \frac{1}{2} - \kappa & - \kappa & - \kappa & - \kappa\\
	\hline
	\end{array} \]
	\caption{Regularity of stochastic objects.}
  \label{t:reg}
\end{center}
\end{table}

The renormalization of $\tau(r;s)$ can be done by a similar argument as \cite{ZZ15}.

	\br We emphasize that the renormalization constants $C_\eps, C_{i,\eps}$, $i=1,2$, only depend on $t$. Hence, due to the divergence in the nonlinear term, they do not appear in the approximate Navier--Stokes system driven by the mollified noise $\zeta_{\varepsilon}$.  If we modified the $C_\eps, C_{i,\eps}$, $i=1,2$, by adding a finite constant, some of the limit random distributions $\tau$ would change. For the rest of the paper, we fix the stochastic objects $\tau$ and prove existence of infinitely many solutions $h$ to  \eqref{solution} with these fixed stochastic data.
	\er

Finally, we note that similarly to our decomposition, the local
solution $u$ obtained in Theorem~1.1 in {\cite{ZZ15}} decomposes as $u=z+z_{1}+h$ with a suitable paracontrolled ansatz for $h$. In particular, by Schauder estimates, the part $h$
possesses a positive spatial regularity at positive times, i.e. it belongs in particular to
$L^2$. For this reason, it is sufficient to restrict our attention to initial
conditions for the convex integration scheme in $L^2$. Indeed, for an initial
condition in $C^{\eta}$, $\eta \in (- 1, - 1 / 2)$, we can always start with
the local solution from \cite{ZZ15} and start with convex integration only at a
positive (random) time.

\subsection{Notion of solution}
\label{s:sol}

To begin with, we write  formally
\[ u = z + v = z + z_1 + h, \]
where $z$ and $z_1$ solve \eqref{eq:zz} and \eqref{eq:z1}, respectively.
Then the equation for $h$ reads as
\begin{align}\label{solution}
\mathcal{L}h+\div((h+z_1)\otimes (h+z_1)+z\otimes (h+z_1)+(h+z_1)\otimes z)+\nabla p^h&=0,
\\\div h & = 0,\nonumber
\end{align}
which shall be further rewritten as a system for $v^{1}$ and $v^{2}$ in Section \ref{s:dec}.
 Indeed, the multiplication of $h$
and $z$ is not well-defined as the expected sum of their regularities is not
strictly positive for the resonant product $h \varocircle z$ to be
well-defined, cf. Lemma~\ref{lem:para}.
Collecting the terms which make $h$ too irregular leads to the following paracontrolled ansatz
\begin{align*}
h=-\mathbb{P} [h \prec  \mathcal{I} \nabla z] +
\vartheta -\tmcolor{orange}{\mathbb{P}\mathcal{I} \tmop{div} (z \otimes z_1 + z_1 \otimes
	z)},
\end{align*}
where $\cI f(t)=\int_0^tP_{t-s}f(s)\dif s$.

Then $\vartheta$ becomes more regular than $h$ since
\begin{equation}\label{eq:varth}
\vartheta=h+\tmcolor{orange}{\mathbb{P}\mathcal{I} \tmop{div} (z \otimes z_1 + z_1 \otimes
	z)}+\mathbb{P}\mathcal{I}[h\prec \nabla z]-\mathbb{P}[\mathcal{I},h\prec]\nabla z,
\end{equation}
	where $[\mathcal{I},h\prec]\nabla z$ denotes the commutator between $\mathcal{I}$ and $h\prec$ given by
	$$
	[\mathcal{I},h\prec]\nabla z=\mathcal{I}[h\prec\nabla z]-h\prec \mathcal{I}\nabla z.
	$$
	We use the same notation for  other commutators as well.
The second and the third terms on the right hand side of \eqref{eq:varth}  cancel the irregular terms in $h$ whereas the last term has better regularity by Lemma \ref{commutator}.
 Using the commutator Lemma \ref{commutator}, Lemma~\ref{lem:com1} and Lemma \ref{lem:com2}, we write formally $h \varocircle z$ as
\begin{align*}
\begin{aligned}
h \varocircle z&= -\mathbb{P} [h\prec  \mathcal{I}
\nabla z] \varocircle z + \vartheta\varocircle z - (\mathbb{P}\mathcal{I}
\tmop{div} (z \otimes z_1 + z_1 \otimes z)) \varocircle z
\\
&= - ([\mathbb{P}, h \prec] \mathcal{I} \nabla z) \varocircle z -
\tmop{com} (h, \mathbb{P}\mathcal{I} \nabla z, z) - h \cdot(\tmcolor{orange} {\mathbb{P}\mathcal{I} \nabla z \varocircle z})
\\& \qquad+\vartheta \varocircle z- \tmcolor{orange} {
	(\mathbb{P}\mathcal{I} \tmop{div} (z \otimes z_1 + z_1 \otimes z))
	\varocircle z}.
\end{aligned}
\end{align*}
The above orange terms  are still ill-defined and   need to be replaced by the corresponding stochastic objects. Hence the rigorous paracontrolled ansatz and the definition of  $h \varocircle z$ read as
\begin{align}
\label{anastz}
h=-\mathbb{P} [h \prec  \mathcal{I} \nabla z] +
\vartheta -(z^{\<21l1>}+z^{\<211>}),
\end{align}
\begin{align}\label{product}
\begin{aligned}
h \varocircle z &:=
- ([\mathbb{P}, h \prec] \mathcal{I} \nabla z) \varocircle z -
\tmop{com} (h, \mathbb{P}\mathcal{I} \nabla z, z) - h\cdot z^{\<101>}
+ \vartheta \varocircle z-
z^{\<2111>},
\end{aligned}
\end{align}

Let us now formulate the definition of paracontrolled solution to \eqref{solution}. To this end, we recall  that the given cylindrical Wiener process $B$ is defined on  a stochastic basis $(\Omega,\mathcal{F},( \mathcal{F}_t)_{t\geq 0},\mathbf{P})$ and this is also where all the stochastic objects in Section~\ref{s:stoch} are constructed.

\begin{definition}\label{def:sol}
	We say that a pair of $( \mathcal{F}_t)_{t\geq 0}$-adapted processes
$$
	(h,\vartheta)\in \left(L^2(0,T,L^2)\cap C_{T,1/2-\kappa}^{1/10}L^{5/3}\cap L^1(0,T,B^{1/5}_{5/3,\infty})\cap C_TH^{-1}\right)\times \left(
	L^1(0,T,B^{3/5-\kappa}_{1,\infty})\cap C_TH^{-1}\right)
	$$
$\mathbf{P}$-a.s.	with $ \kappa>0$ is a paracontrolled solution to \eqref{solution} provided \eqref{anastz} holds
	and the equation \eqref{solution} holds in the analytically weak sense with the resonant product $h \varocircle z$  given by \eqref{product}.
\end{definition}

\begin{remark}
(i)	We note that paracontrolled solutions are probabilistically strong, that is, they are adapted to the given filtration $(\mathcal{F}_{t})_{t\geq0}$.

(ii) The space $C^{1/10}_{T,1/2-\kappa}$ with singularity at time zero is used in Definition~\ref{def:sol} in order to cover the case of irregular initial conditions, namely, divergence free initial conditions in $C^{-1+{2}\kappa}$ for $\kappa>0$.

(iii) The function spaces to which $(h,\vartheta)$ belong together with Proposition~\ref{p:sto} guarantee that  all the terms in \eqref{anastz}-\eqref{product} are well-defined. This in turn permits to make sense of all the terms in \eqref{solution}.
\end{remark}

\subsection{Formal decomposition}\label{s:dec}

We continue with the formal decomposition of the equation in order to find the desired paracontrolled ansatz. Moreover, in order to be able to apply convex integration, we need to introduce a further decomposition into $v^{1}$, $v^{2}$. In this subsection, we ignore the fact that various products of $z$ are not well-defined. We proceed as if these were well-defined and we replace their values in the end by the corresponding stochastic objects constructed in Section~\ref{s:stoch}.

Let $h=v^{1}+v^{2}$, \eqref{solution} rewrites as
\begin{equation}\label{eq:v1v2}
\begin{aligned}
\mathcal{L} (v^1 + v^2) + \tmop{div} ((v^1 + v^2 + z_1) \otimes (v^1 + v^2
+ z_1) + &z \otimes (v^1 + v^2 + z_1) + (v^1 + v^2 + z_1) \otimes z) \\
+ \nabla (p^1 + p^2) &= 0,\qquad \tmop{div} (v^{1}+v^{2})=0.
\end{aligned}
\end{equation}
The regular terms (encoded in \tmcolor{blue}{blue}) shall be put in the
equation for $v^2$, whereas the irregular ones (in \tmcolor{magenta}{magenta})
into $v^1$, so that in the end
\begin{equation}\label{eq:vv1}
\mathcal{L}v^1 + \tmop{div} (\tmcolor{magenta}{z_1 \otimes z_1 + V^1 +
	V^{1, \ast}}) + \nabla p^1 = 0, \qquad \tmop{div} v^1 = 0,
\end{equation}
\begin{equation}\label{eq:vv2}
\mathcal{L}v^2 + \tmop{div} (\tmcolor{blue}{(v^1 + v^2) \otimes (v^1 + v^2)
	+ V^2 + V^{2, \ast}}) + \nabla p^2 = 0, \qquad \tmop{div} v^2 = 0.
\end{equation}
Here $\tmcolor{magenta}{V^{1, \ast}}$ and $\tmcolor{blue}{V^{2, \ast}}$,
respectively, denotes the transpose of $\tmcolor{magenta}{V^1}$ and
$\tmcolor{blue}{V^2}$, respectively.
First, we
assign
\[ \tmcolor{blue}{(v^1 + v^2) \otimes (v^1 + v^2)}, \quad
\tmcolor{magenta}{z_1 \otimes z_1 + z \otimes z_1 + z_1 \otimes z} . \]
Then we decompose
\[ (v^1 + v^2) \otimes z_1 = (v^1 + v^2) \otimes \Delta_{> R} z_1 + (v^1 +
v^2) \otimes \Delta_{\leqslant R} z_1 \]
\[ = \tmcolor{magenta}{(v^1 + v^2) \oprec \Delta_{> R} z_1 + v^1 \osucccurlyeq
	\Delta_{> R} z_1} \tmcolor{blue}{+ v^2 \osucccurlyeq \Delta_{> R} z_1}
\tmcolor{blue}{+ (v^1 + v^2) \otimes \Delta_{\leqslant R} z_1} . \]
These terms are included into $\tmcolor{magenta}{V^1}$ and
$\tmcolor{blue}{V^2}$, whereas the symmetric counterparts
\[ z_1 \otimes (v^1 + v^2) = \tmcolor{magenta}{\Delta_{> R} z_1 \osucc (v^1 +
	v^2) + \Delta_{> R} z_1 \opreccurlyeq v^1} \tmcolor{blue}{+ \Delta_{> R}
	z_1 \opreccurlyeq v^2} \tmcolor{blue}{+ \Delta_{\leqslant R} z_1 \otimes
	(v^1 + v^2)} \]
are in $\tmcolor{magenta}{V^{1, \ast}}$ and $\tmcolor{blue}{V^{2, \ast}}$.
Note that we colored  $\tmcolor{magenta}{v^1 \osucccurlyeq
	\Delta_{> R} z_1}$ magenta. Indeed,  it requires regularity of $v^{1}$ which is not true uniformly as $t\to0$, since the initial condition is only in $L^{2}$. Hence if colored blue, this term cannot be controlled in the convex integration scheme. This applies also to other terms below.

In
the rest of the computation we only discuss the terms in
$\tmcolor{magenta}{V^1}$ and $\tmcolor{blue}{V^2}$, the symmetric terms being
included automatically.
We split
\[ (v^1 + v^2) \otimes z = (v^1 + v^2) \oprec z + (v^1 + v^2) \osucc z + (v^1
+ v^2) \varocircle z \]
\[ = \tmcolor{magenta}{(v^1 + v^2) \oprec \Delta_{> R} z + v^1 \osucc
	\Delta_{> R} z} \tmcolor{blue}{\,+ \,v^2 \osucc \Delta_{> R} z + (v^1 + v^2)
	\left( \oprec + \osucc \right) \Delta_{\leqslant R} z + v^2 \varocircle z}
+ v^1 \varocircle z. \]
For the resonant product $v^1 \varocircle z$ we shall use a paracontrolled
ansatz which permits
to cancel the worst terms in $\tmcolor{magenta}{V^1}$. In particular, the multiplication of $v^1$
and $z$ is not well-defined as the expected sum of their regularities is not
strictly positive making the resonant product $v^1 \varocircle z$
ill-defined. First of all, there is the stochastic term $\div (z \otimes z_1 + z_1
\otimes z)$ which makes $v^1$ too irregular. Second of all, it is the following
paraproduct between $v^1$ and $z$
\[ \tmop{div} \left(\Delta_{>R} z \osucc (v^1 + v^2) \right) = (v^1 + v^2) \prec \Delta_{>R}\nabla
z, \]
which creates problems. This leads to the paracontrolled ansatz
\begin{equation}\label{eq:para}
v^1 = -\mathbb{P} [(v^1 + v^2) \prec \Delta_{> R} \mathcal{I} \nabla z] +
v^{\sharp} -\mathbb{P}\mathcal{I} \tmop{div} (z \otimes z_1 + z_1 \otimes
z), \qquad \tmop{div} v^{\sharp} = 0,
\end{equation}
where $\mathcal{I}f (t) = \int_0^t e^{(t - s)\Delta} f (s) \mathd s$. Thus,
 \begin{align}
 \label{eq:vv3}
	v^\sharp=v^1+\mathbb{P}\mathcal{I} \tmop{div} (z \otimes z_1 + z_1 \otimes
z)+\cI\mathbb{P} [(v^1 + v^2) \prec \Delta_{>R}\nabla z] -\mathbb{P} [\cI, (v^1 +
	v^2)\prec ]\Delta_{>R}\nabla z.
	\end{align}

This permits to cancel the two terms
\[ -\mathbb{P}\mathcal{I} \tmop{div} (z \otimes z_1 + z_1 \otimes
z) -\cI\mathbb{P} [(v^1
+ v^2) \prec \Delta_{> R} \nabla z] \]
from $v^1$.  Consequently, $v^{\sharp}$ has a better regularity than
$v^1$ and hence, in view of the commutator lemma, Lemma~\ref{lem:com1}, the resonant product $v^1
\varocircle z$ can be rigorously defined.
To this end, we compute
\[ \mathbb{P} [(v^1 + v^2) \prec \Delta_{> R} \mathcal{I} \nabla z]
\varocircle z =\mathbb{P} [(v^1 + v^2) \prec \mathcal{I} \nabla z]
\varocircle z -\mathbb{P} [(v^1 + v^2) \prec \Delta_{\leqslant R}
\mathcal{I} \nabla z] \varocircle z \]
\[ = ([\mathbb{P}, (v^1 + v^2) \prec] \mathcal{I} \nabla z) \varocircle z +
[(v^1 + v^2) \prec \mathbb{P}\mathcal{I} \nabla z] \varocircle z
-\mathbb{P} [(v^1 + v^2) \prec \Delta_{\leqslant R} \mathcal{I} \nabla z]
\varocircle z \]
\[ = ([\mathbb{P}, (v^1 + v^2) \prec] \mathcal{I} \nabla z) \varocircle z +
\tmop{com} (v^1 + v^2, \mathbb{P}\mathcal{I} \nabla z, z) \]
\[ + (v^1 + v^2)\cdot (\mathbb{P}\mathcal{I} (\nabla z) \varocircle z) -\mathbb{P}
[(v^1 + v^2) \prec \Delta_{\leqslant R} \mathcal{I} \nabla z] \varocircle
z. \]
The above commutator between the Helmholtz projection and the paraproduct is
understood componentwise as follows
\[ ([\mathbb{P}, (v^1 + v^2) \prec] \mathcal{I} \nabla z)^i =\mathbb{P}^{i j}
[(v^1 + v^2)^k \prec \mathcal{I} \partial_k z^j] - (v^1 + v^2)^k \prec
\mathbb{P}^{i j} \mathcal{I} \partial_k z^j \]
and accordingly
\[ ((v^1 + v^2) \prec \mathbb{P}\mathcal{I} \nabla z)^i = (v^1 + v^2)^k \prec
\mathbb{P}^{i j} \mathcal{I} \partial_k z^j . \]
We deduce
\[ v^1 \varocircle z = -\mathbb{P} [(v^1 + v^2) \prec \Delta_{> R} \mathcal{I}
\nabla z] \varocircle z + v^{\sharp} \varocircle z - (\mathbb{P}\mathcal{I}
\tmop{div} (z \otimes z_1 + z_1 \otimes z)) \varocircle z \]
\[ = - ([\mathbb{P}, (v^1 + v^2) \prec] \mathcal{I} \nabla z) \varocircle z -
\tmop{com} (v^1 + v^2, \mathbb{P}\mathcal{I} \nabla z, z) - (v^1 + v^2)\cdot
(\mathbb{P}\mathcal{I} \nabla z \varocircle z) \]
\[ \tmcolor{blue}{-\mathbb{P} [(v^1 + v^2) \prec \Delta_{\leqslant R}
	\mathcal{I} \nabla z] \varocircle z} + v^{\sharp} \varocircle z  \tmcolor{magenta}{-
	(\mathbb{P}\mathcal{I} \tmop{div} (z \otimes z_1 + z_1 \otimes z))
	\varocircle z}. \]
In order to avoid the singularity at time zero in the convex integration,  we include an additional splitting into $\Delta_{> R}$ and
$\Delta_{\leqslant R}$ of the remaining (uncolored) terms above as follows
\[ - ([\mathbb{P}, (v^1 + v^2) \prec] \mathcal{I} \nabla z) \varocircle z =
\tmcolor{magenta}{- ([\mathbb{P}, v^1 \prec] \mathcal{I} \nabla z)
	\varocircle \Delta_{> R} z} \]
\[ \tmcolor{blue}{- ([\mathbb{P}, v^1 \prec] \mathcal{I} \nabla z) \varocircle
	\Delta_{\leqslant R} z} \tmcolor{blue}{- ([\mathbb{P}, v^2 \prec]
	\mathcal{I} \nabla z) \varocircle z}, \]
\[ - \tmop{com} (v^1 + v^2, \mathbb{P}\mathcal{I} \nabla z, z) =
\tmcolor{magenta}{- \tmop{com} (v^1, \mathbb{P}\mathcal{I} \nabla z,
	\Delta_{> R} z)} \]
\[ \tmcolor{blue}{- \tmop{com} (v^1, \mathbb{P}\mathcal{I} \nabla z,
	\Delta_{\leqslant R} z)} \tmcolor{blue}{- \tmop{com} (v^2,
	\mathbb{P}\mathcal{I} \nabla z, z)}, \]
\[ - (v^1 + v^2)\cdot (\mathbb{P}\mathcal{I} \nabla z \varocircle z) =
\tmcolor{magenta}{- (v^1 + v^2) \prec \Delta_{> R} (\mathbb{P}\mathcal{I}
	\nabla z \varocircle z)} \tmcolor{magenta}{- v^1 \succcurlyeq \Delta_{> R}
	(\mathbb{P}\mathcal{I} \nabla z \varocircle z)} \]
\[ \tmcolor{blue}{- v^2 \succcurlyeq \Delta_{> R} (\mathbb{P}\mathcal{I}
	\nabla z \varocircle z)} \tmcolor{blue}{- (v^1 + v^2) \cdot
	\Delta_{\leqslant R} (\mathbb{P}\mathcal{I} \nabla z \varocircle z)}, \]
\[ v^{\sharp} \varocircle z = \tmcolor{magenta}{v^{\sharp} \varocircle
	\Delta_{> R} z} \tmcolor{blue}{+ v^{\sharp} \varocircle \Delta_{\leqslant
		R} z} . \]

Finally, collecting all the terms leads us to
\[ \tmcolor{magenta}{V^1} = \tmcolor{magenta}{z_1 \otimes z}
\tmcolor{magenta}{+ (v^1 + v^2) \oprec \Delta_{> R} z_1 + v^1 \osucccurlyeq
	\Delta_{> R} z_1 + (v^1 + v^2) \oprec \Delta_{> R} z + v^1 \osucc \Delta_{>
		R} z} \]
\[ \tmcolor{magenta}{} \tmcolor{magenta}{- (\mathbb{P}\mathcal{I} \tmop{div}
	(z \otimes z_1 + z_1 \otimes z)) \varocircle z} \tmcolor{magenta}{-
	([\mathbb{P}, v^1 \prec] \mathcal{I} \nabla z) \varocircle \Delta_{> R} z}
\tmcolor{magenta}{} \tmcolor{magenta}{- \tmop{com} (v^1,
	\mathbb{P}\mathcal{I} \nabla z, \Delta_{> R} z)} \]
\[ \tmcolor{magenta}{- (v^1 + v^2) \prec \Delta_{> R} (\mathbb{P}\mathcal{I}
	\nabla z \varocircle z)} \tmcolor{magenta}{- v^1 \succcurlyeq \Delta_{> R}
	(\mathbb{P}\mathcal{I} \nabla z \varocircle z) + v^{\sharp} \varocircle
	\Delta_{> R} z}, \]
\[ \tmcolor{blue}{V^2} = \tmcolor{blue}{v^2 \osucccurlyeq \Delta_{> R} z_1}
\tmcolor{blue}{+ (v^1 + v^2) \otimes \Delta_{\leqslant R} z_1} \]
\[ \tmcolor{blue}{+ v^2 \osucc \Delta_{> R} z + (v^1 + v^2) \left( \oprec +
	\osucc \right) \Delta_{\leqslant R} z + v^2 \varocircle z} \]
\[ \tmcolor{blue}{-\mathbb{P} [(v^1+v^2) \prec \Delta_{\leqslant R} \mathcal{I}
	\nabla z] \varocircle z - ([\mathbb{P}, v^2 \prec] \mathcal{I} \nabla z)
	\varocircle z} \tmcolor{blue}{- ([\mathbb{P}, v^1  \prec]
	\mathcal{I} \nabla z) \varocircle \Delta_{\leqslant R} z} \]
\[ \tmcolor{blue}{\tmcolor{blue}{- \tmop{com} (v^1, \mathbb{P}\mathcal{I}
		\nabla z, \Delta_{\leqslant R} z) - \tmop{com} (v^2, \mathbb{P}\mathcal{I}
		\nabla z, z)} - v^2 \succcurlyeq \Delta_{> R} (\mathbb{P}\mathcal{I} \nabla
	z \varocircle z)} \]
\[ \tmcolor{blue}{- (v^1 + v^2) \cdot \Delta_{\leqslant R}
	(\mathbb{P}\mathcal{I} \nabla z \varocircle z) + v^{\sharp} \varocircle
	\Delta_{\leqslant R} z} . \]

It will be seen  below, that letting
\[
h=v^{1}+v^{2},\qquad \vartheta = v^{\sharp} + v^{2} + \mathbb{P}[ (v^{1}+v^{2})\prec \Delta_{\leq R}\mathcal{I}\nabla z]
\]
satisfies the requirements of Definition~\ref{def:sol}.
The idea then is to apply convex integration on the level of the equation \eqref{eq:vv2} for $v^{2}$. In particular, we need to make sure that the convex integration gives $v^{2}$ of the required regularity for $\vartheta$.

\section{Convex integration set-up and results}\label{s:con}

The goal of this section  is to construct infinitely many probabilistically strong paracontrolled solutions $(h,\vartheta)$ and deduce global existence and non-uniqueness for the system \eqref{solution}. The equation \eqref{solution} for $h$ splits formally into the coupled system \eqref{eq:vv1}, \eqref{eq:vv2} for $v^{1}$ and $v^{2}$ and it remains
to  include the stochastic objects. This leads to
\begin{align}\label{main4}
\begin{aligned}
\mathcal{L}v^1 + \tmop{div} (z^{\<2020>} + V^1 + V^{1, \ast}) +
\nabla p^1 &= 0, \\
\mathcal{L}v^2 + \tmop{div} ((v^1 + v^2) \otimes (v^1 + v^2) +
V^2 + V^{2, \ast}) + \nabla p^2 &= \tmop{div} \mathring{R}_q, \\
\tmop{div} v^1 = \tmop{div} v^2 = 0, \qquad v^1 (0) = v_{0}, \qquad  v^{2}(0)&=0,
\end{aligned}
\end{align}
with
\[ {V^1} =  z^{\<21>}
+ (v^1 + v^2) \oprec \Delta_{> R} z^{\<20>} + v^1 \osucccurlyeq
\Delta_{> R}  z^{\<20>}  + (v^1 + v^2) \oprec \Delta_{> R} z + v^1 \osucc \Delta_{>
	R} z \]
\[ - z^{\<2111>} -
([\mathbb{P}, v^1 \prec] \mathcal{I} \nabla z) \varocircle \Delta_{> R} z
- \tmop{com} (v^1,
\mathbb{P}\mathcal{I} \nabla z, \Delta_{> R} z) \]
\[- (v^1 + v^2) \prec \Delta_{> R} z^{\<101>} - v^1 \succcurlyeq \Delta_{> R}
z^{\<101>} + v^{\sharp} \varocircle
\Delta_{> R} z \]
and
\[ {V^2} = v^2 \osucccurlyeq \Delta_{> R} z^{\<20>}
+ (v^1 + v^2) \otimes \Delta_{\leqslant R} z^{\<20>} \]
\[ + v^2 \osucc \Delta_{> R} z + (v^1 + v^2) \left( \oprec +
\osucc \right) \Delta_{\leqslant R} z + v^2 \varocircle z \]
\[ -\mathbb{P} [(v^1+v^2) \prec \Delta_{\leqslant R} \mathcal{I}
\nabla z] \varocircle z - ([\mathbb{P}, v^2 \prec] \mathcal{I} \nabla z)
\varocircle z - ([\mathbb{P},v^1  \prec]
\mathcal{I} \nabla z) \varocircle \Delta_{\leqslant R} z \]
\[- \tmop{com} (v^1, \mathbb{P}\mathcal{I}
\nabla z, \Delta_{\leqslant R} z) - \tmop{com} (v^2, \mathbb{P}\mathcal{I}
\nabla z, z) - v^2 \succcurlyeq \Delta_{> R} z^{\<101>} \]
\[ - (v^1 + v^2) \cdot \Delta_{\leqslant R} z^{\<101>}
+ v^{\sharp} \varocircle
\Delta_{\leqslant R} z . \]
The paracontrolled ansatz for $v^{1}$ reads as
\begin{equation}\label{eq:para2}
v^{1}=-\mathbb{P} [(v^{1}+v^{2}) \prec \Delta_{>R} \mathcal{I} \nabla z] +
v^{\sharp} -(z^{\<21l1>}+z^{\<211>}).
\end{equation}
These  equations need to be considered together within the convex integration scheme and we put forward a joint iterative procedure.

The convex integration iteration is indexed by a parameter $q\in\mathbb{N}_{0}$. It will be seen that the Reynolds stress $\mathring{R}_{q}$ is only required for the approximations $v^{2}_{q}$ of  $v^{2}$, whereas the approximations $v^{1}_{q}$ of $v^{1}$ are obtained by a fixed point argument. We consider an increasing sequence $\{\lambda_q\}_{q\in\mathbb{N}_0}\subset \mathbb{N}$ which diverges to $\infty$, and a sequence $\{\delta_q\}_{q\in \mathbb{N}_0}\subset(0,1)$  which decreases to $0$. We choose $a\in\mathbb{N}$, $b\in\mathbb{N}$,  $\beta\in (0,1)$ and let
$$\lambda_q=a^{(b^q)}, \quad \delta_q=\lambda_1^{2\beta}\lambda_q^{-2\beta},$$
where $\beta$ will be chosen sufficiently small and $a$ as well as $b$ sufficiently large. At each step $q$, a triple $(v_q^1, v_q^2, \mathring{R}_q)$ is constructed solving the following system
\begin{align}\label{induction3}
\begin{aligned}
\mathcal{L}v_q^1 + \tmop{div} (z^{\<2020>} + V_q^1 + V_q^{1, \ast}) +
\nabla p_q^1 &= 0, \\
\mathcal{L}v_q^2 + \tmop{div} ((v_q^1 + v_q^2) \otimes (v_q^1 + v_q^2) +
V_q^2 + V_q^{2, \ast}) + \nabla p_q^2 &= \tmop{div} \mathring{R}_q, \\
\tmop{div} v_q^1 = \tmop{div} v_q^2 = 0, \qquad v^1_q (0) = v_{ 0}, \qquad  v^{2}_{q}(0)&=0,
\end{aligned}
\end{align}
where
\[ V_q^1 = z^{\<21>} + (v_q^1 + v_q^2) \oprec \Delta_{\leqslant f (q)}
\Delta_{> R} (z + z^{\<20>}) + v_{q}^1
\osucccurlyeq \Delta_{> R} z^{\<20>}+ v_q^1 \osucc \Delta_{\leqslant f (q)} \Delta_{> R} z \]
\[  - ([\mathbb{P}, v_q^1 \prec] \mathcal{I} \nabla z) \varocircle \Delta_{> R} z
\tmcolor{magenta}{} - \tmop{com} (v_q^1, \mathbb{P}\mathcal{I} \nabla z,
\Delta_{> R} z)
-z^{\<2111>} \]
\[ - (v_q^1 + v_q^2) \prec \Delta_{\leqslant f (q)}
\Delta_{> R} z^{\<101>} - v_{q}^1
\succcurlyeq \Delta_{> R} z^{\<101>}+ v_q^{\sharp} \varocircle \Delta_{> R} z \]
and
\[ V_q^2 = v_q^2 \osucccurlyeq \Delta_{> R} z^{\<20>}  + (v_q^1 + v_q^2) \otimes \Delta_{\leqslant R} z^{\<20>}
\]
\[ + v_q^2 \osucc \Delta_{> R} z + (v_q^1 + v_q^2) \left( \oprec + \osucc
\right) \Delta_{\leqslant R} z + v_{q}^2 \varocircle z \]
\[ -\mathbb{P} [(v_q^1 +v_q^2)\prec \Delta_{\leqslant R} \mathcal{I} \nabla z]
\varocircle z - ([\mathbb{P}, v_q^2 \prec] \mathcal{I} \nabla z)
\varocircle z - ([\mathbb{P}, v_q^1  \prec] \mathcal{I} \nabla z)
\varocircle \Delta_{\leqslant R} z \]
\[ - \tmop{com} (v_q^1, \mathbb{P}\mathcal{I} \nabla z, \Delta_{\leqslant R}
z) - \tmop{com} (v_q^2, \mathbb{P}\mathcal{I} \nabla z, z) -v_q^2 \succcurlyeq \Delta_{> R} z^{\<101>}  \]
\[
- (v_q^1 + v_q^2) \cdot \Delta_{\leqslant R} z^{\<101>} + v_q^{\sharp}
\varocircle \Delta_{\leqslant R} z.
\]
Here,  $V^1_q$ and $V^2_q$ are obtained from $V^1$ and $V^2$,
respectively, by replacing $v^1$, $v^2$, $v^{\sharp}$ by $v_q^1$, $v_q^2$,
$v_q^{\sharp}$ and adding the projector $\Delta_{\leqslant f
	(q)}$, where $2^{f(q)}=\lambda_{q}^{\theta}$, $\theta={10}/{21}$,  into the second, the fourth and the eighth term in $V^1_q$.  These projectors are used
to control the blow-up of certain norms of $v^1_q$ and $v^2_q$ and
also to prove the convergence of $v^1_q$ as $q \rightarrow \infty$.  We shall therefore require the parameter $a$ to be a power of $2^{21}$ and $b\in \mathbb{N}$. The parameter  $R$ is chosen in \eqref{c:R} below.

Analogously to the paracontrolled ansatz \eqref{eq:para2}, $v^{1}_{q}$, $v^{2}_{q}$ and $v_q^\sharp$ are linked via
\begin{align}
\label{vsharp}
v_q^1=-\mathbb{P}[(v_q^1+v_q^2)\prec\mathcal{I}(\nabla \Delta_{>R}\Delta_{\leq f(q)}z)]+v_q^\sharp-(z^{\<21l1>}+z^{\<211>}).
\end{align}
Our main goal is to prove convergence of $v^{1}_{q}$, $v^{2}_{q}$ and $v^{\sharp}_{q}$ as $q\to\infty$ and to show that  their limits satisfy \eqref{main4}, \eqref{eq:para2} in order to recover a paracontrolled solution to \eqref{solution} in the sense of Definition~\ref{def:sol}.
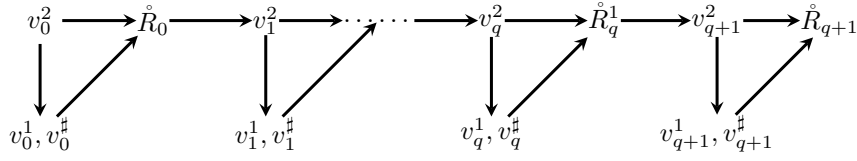
\begin{figure}
	\begin{tikzpicture}
	\node (lnd)  {$v^2_0$};
	
	\node (12) [below of=lnd, yshift=-0.5cm, inner sep = 0pt] {$v^1_0, v_0^\sharp$};
	
	\node (13) [right of=lnd, xshift=0.5cm, inner sep = 0pt] {$\mathring R_0$};
	
	\node (14) [right of=13, xshift=0.5cm, inner sep = 0pt] {$v^2_1$};
	
	\node (131) [below of=14, yshift=-0.5cm, inner sep = 0pt] {$v^1_1, v_1^\sharp$};
	
	\node (15) [right of=14, xshift=0.5cm, inner sep = 0pt] {$\dots \dots$};
	
	\node (16) [right of=15, xshift=0.5cm, inner sep = 0pt] {$v_q^2$};
	
	\node (17) [below of=16, yshift=-0.5cm, inner sep = 0pt] {$v^1_q, v_q^\sharp$};
	
	\node (18) [right of=16, xshift=0.5cm, inner sep = 0pt] {$\mathring R_q^1$};
	
	\node (19) [right of=18, xshift=0.5cm, inner sep = 0pt] {$v_{q+1}^2$};
	
	\node (20) [below of=19, yshift=-0.5cm, inner sep = 0pt] {$v_{q+1}^1,v_{q+1}^\sharp$};
	
	\node (21) [right of=19, xshift=0.5cm, inner sep = 0pt] {$\mathring R_{q+1}$};

	\draw[->,line width=0.4mm] (lnd) -- (12);
	\draw[->,line width=0.4mm] (lnd) -- (13);
	\draw[->,line width=0.4mm] (12) -- (13);
	\draw[->,line width=0.4mm] (13) -- (14);
	\draw[->,line width=0.4mm] (14) -- (131);
	\draw[->,line width=0.4mm] (14) -- (15);
	\draw[->,line width=0.4mm] (131) -- (15);
	\draw[->,line width=0.4mm] (15) -- (16);
	\draw[->,line width=0.4mm] (16) -- (17);
	\draw[->,line width=0.4mm] (16) -- (18);
	\draw[->,line width=0.4mm] (17) -- (18);
	\draw[->,line width=0.4mm] (18) -- (19);
	\draw[->,line width=0.4mm] (19) -- (20);
	\draw[->,line width=0.4mm] (20) -- (21);
	\draw[->,line width=0.4mm] (19) -- (21);
	\end{tikzpicture}	
	\caption{Iteration scheme.}
	\label{f:1}
\end{figure}
See Figure~\ref{f:1} for our iteration scheme. More precisely, we use $v^2_q$ to determine $v^1_q$ and $v^\sharp_q$ by Schauder estimates. Then $\mathring{R}_q$ is determined by $v^1_q, v^\sharp_q$ and $v^2_q$. The next velocity $v^2_{q+1}$ is only determined by $\mathring R_{q}$ via a convex integration argument.

As the next step, we define a stopping time which controls suitable norms of all the required stochastic objects. Namely, for  $L>1$
we let
\begin{align}\label{stopping time3}
T_L&:= T^1_L\wedge T^2_L\wedge T^3_L\wedge T^4_L\wedge T^5_L\wedge  L^{1/2},\\
T^1_L&:=\inf\left\{t\geq0, \|z(t)\|_{C^{-1/2-\kappa}}\geq L^{1/2}\right\}\wedge \inf\left\{t\geq0,\|z\|_{{C_t^{1/10}C^{-7/10-\kappa}}}\geq {L^{1/2}}\right\},\nonumber\\
T^2_L&:=\inf\left\{t\geq0, \|z^{\<20>}(t)\|_{C^{-\kappa}}\geq L\right\}{\wedge\inf\left\{t\geq0, \|z^{\<20>}\|_{C_t^{1/10}C^{-1/5-\kappa}}\geq L\right\}} ,\nonumber\\
T^3_L&:=\inf\left\{t\geq0, \|z^{\<2020>}(t)\|_{C^{-\kappa}}\geq L\right\}\wedge \inf\left\{t\geq0, \|z^{\<21>}(t)\|_{C^{-1/2-\kappa}}+\|z^{\<21l>}(t)\|_{C^{-1/2-\kappa}}\geq L\right\},\nonumber\\
T^4_L&:=\inf\left\{t\geq0, \sup_{0\leq s< r\leq t}\|z^{\<101>}(r;s)\|_{C^{-\kappa}}\geq L\right\} {\wedge\inf\left\{t\geq0, \|z^{\<101>}\|_{C_t^{1/10}C^{-1/5-\kappa}}\geq L\right\}},\nonumber
\\
T^5_L&:= \inf\left\{t\geq0, \sup_{0\leq s< r\leq t}\|z^{\<2111>}(r;s)\|_{C^{-\kappa}}\geq L\right\},\nonumber
\end{align}
where we denoted by
$
z^{\<2111>}(r;s)
$
and $z^{\<101>}(r;s)$
the stochastic objects obtained the same way as $z^{\<2111>}(r)$ and $z^{\<101>}(r)$ but replacing the last integration operator $\mathcal{I}=\mathcal{I}_{0,r}$ by
$\cI_{s,r}=\int_s^re^{(r-l)\Delta}\dif l$.
It follows from Proposition~\ref{p:sto} that the stopping time $T_{L}$ is $\mathbf{P}$-a.s. strictly positive and it holds that $T_{L}\uparrow\infty$ as $L\rightarrow\infty$ $\mathbf{P}$-a.s.

We intend to solve \eqref{main4} for any given divergence free initial condition $v_{0}\in L^{2}\cup C^{-1+\kappa}$ measurable with respect to $\mathcal{F}_0$.
However, in the first step, we take the following additional assumption:
Let $N\geq1$ be given and assume that $\mathbf{P}$-a.s.
\begin{equation}\label{eq:u0}
\|v_{0}\|_{L^{2}}\leq N.
\end{equation}
We keep this additional assumption on the initial condition throughout the convex integration step
in Proposition~\ref{p:iteration p}. In Theorem~\ref{thm:6.1} it is relaxed to $v_{0}\in L^{2}$ $\mathbf{P}$-a.s and, finally, Corollary~\ref{cor} proves the result if $v_{0}\in C^{-1+\kappa}$ $\mathbf{P}$-a.s.
We also suppose that there is a deterministic constant $M_{L}(N)^{1/2}\geq L^6N+L^{29}$. In the following we write $M_L$ instead of $M_L(N)$ for simplicity.

Let $\alpha\in(0,1)$ be a small parameter to be chosen below.
By induction on $q$ we  assume the following bounds for the iterations $v^{2}_q$: if $t\in[0, T_L]$ then for $p=\frac{32}{32-7\alpha}$
\begin{equation}\label{inductionv2p}
\aligned
\|v^2_q\|_{C_{t}W^{2/3,p}}&\leq {a^{-\alpha/2}}M_L^{1/2}(1+\sum_{1\leq r\leq q}\delta_{r}^{1/2})\leq 3M_L^{1/2}{a^{-\alpha/2}},
%
\\\|v^2_q\|_{C_{t}^{1/10}L^{5/3}}+\|v^2_q\|_{C_{t}W^{1/5,5/3}}&\leq {a^{-\alpha/2}}M_L^{1/2}(1+\sum_{1\leq r\leq q}\delta_{r}^{1/2})\leq 3M_L^{1/2}{a^{-\alpha/2}}.
\endaligned
\end{equation}
Later on, we use the factor  $a^{-\alpha/2}$ to absorb an implicit constant.
Here we defined $\sum_{1\leq r\leq 0}:=0$. In addition, we used $\sum_{r\geq 1}\delta_{r}^{1/2}\leq \sum_{r\geq1}a^{b\beta-rb\beta}=\frac{1}{1-a^{-\beta b}}\leq 2$ which boils down  to the requirement
\begin{equation}\label{aaa1}
a^{\beta b}\geq 2,
\end{equation}
which we assume from now on. We also assume that $L$ is large enough such that the implicit constant in \eqref{R0} and \eqref{R1} below can be absorbed by $L$. Moreover, for such $L$ we can always choose $a$ large enough such that $L\leq a^{\alpha/16}$.
%
%
%
We denote  $\sigma_{q}=2^{-q}$, $q\in\mathbb{N}_{0}\cup\{-1\}$,   $\gamma_q=2^{-q}$, $q\in\mathbb{N}_{0}\setminus\{3\}$, and  $\gamma_3=K>0$ arbitrary. This constant will be used in order to distinguish different solutions.

The key result is the following iterative proposition, which we prove below in Section~\ref{ss:it2}. 

\bp\label{p:iteration p}
Let $N\geq 1$ and let $L>1$ sufficiently large. There exists a choice of parameters $a$, $\alpha$,  $b$, $\beta$ such that the following holds true: Let $(v^1_{q},v^2_q,\mathring{R}_{q})$ for some $q\in\mathbb{N}_{0}$ be an $(\mathcal{F}_{t})_{t\geq 0}$-adapted solution to \eqref{induction3}, \eqref{vsharp} satisfying \eqref{inductionv2p} and
\begin{equation}\label{inductionv psp}
\|v_{q}^2(t)\|_{L^{2}}\leq\begin{cases}
M_0 (M_L^{1/2}\sum_{ r=1}^{q}\delta_{r}^{1/2}+\sum_{r=1}^{q}\gamma_{r}^{1/2})+3M_0(M_L(1+3q))^{1/2},&t\in (\frac{\sigma_{q-1}}2\wedge T_L,  T_L],\\
0, &t\in [0,\frac{\sigma_{q-1}}2\wedge T_L],
\end{cases}
\end{equation}
for a universal constant $M_0$,
\begin{align}\label{inductionv C1p}
\|v_q^2\|_{C^1_{t,x}}&\leq \lambda_q^4M_L^{1/2},\quad t\in[0, T_L],
\end{align}
\begin{align}\label{eq:Rp}
\|\mathring{R}_q(t)\|_{L^1}\leq
\delta_{q+1}M_L,\quad t\in (\sigma_{q-1}\wedge T_L,T_L],
\end{align}
\begin{align}\label{bd:Rp}
\|\mathring{R}_{q}(t)\|_{L^1}\leq M_L(1+3q),\quad t\in[0, T_L].
\end{align}
Then    there exists an $(\mathcal{F}_{t})_{t\geq 0}$-adapted process $(v^1_{q+1},v^2_{q+1},\mathring{R}_{q+1})$ which solves \eqref{induction3}, \eqref{vsharp} on the level $q+1$ and satisfies
\begin{equation}\label{iteration psp}
\|v_{q+1}^2(t)-v_q^2(t)\|_{L^2}\leq \begin{cases}
M_0 (M_L^{1/2}\delta_{q+1}^{1/2}+\gamma_{q+1}^{1/2}),& t\in (4\sigma_q\wedge T_L,T_L],\\
M_0 ((M_L(1+3q))^{1/2}+\gamma_{q+1}^{1/2}),& t\in (\frac{\sigma_q}2\wedge T_L,4\sigma_q\wedge T_L],\\
0,
&t\in [0,\frac{\sigma_q}2\wedge T_L],
\end{cases}
\end{equation}
\begin{equation}\label{iteration Rp}
\|\mathring{R}_{q+1}(t)\|_{L^1}\leq\begin{cases}
M_L\delta_{q+2},& t\in (\sigma_q\wedge T_L,T_L],\\
M_L\delta_{q+2}+\sup_{s\in[(t-\sigma_{q}/2)\vee 0,t]}\|\mathring{R}_{q}(s)\|_{L^1},&t\in (\frac{\sigma_q}2\wedge T_L,\sigma_q\wedge T_L],\\
\sup_{s\in[(t-\sigma_{q}/2)\vee 0,t]}\|\mathring{R}_{q}(s)\|_{L^1}+3M_L&t\in [0,\frac{\sigma_q}2\wedge T_L].
\end{cases}
\end{equation}
Consequently, $( v_{q+1}^2,\mathring{R}_{q+1})$ obeys \eqref{inductionv2p}, \eqref{inductionv psp},  \eqref{inductionv C1p}, \eqref{eq:Rp} and \eqref{bd:Rp} at the level $q+1$.
Furthermore, for $1<p=\frac{32}{32-7\alpha}$, $t\in [0,T_L]$ it holds
\begin{align}\label{induction wp ps}
\|v_{q+1}^2(t)-v_q^2(t)\|_{W^{2/3,p}}\leq
M_L^{1/2}\delta_{q+1}^{1/2}a^{-\alpha/2},
\end{align}
\begin{align}\label{induction wtp}
\|v_{q+1}^2-v_q^2\|_{C_t^{1/{10}}L^{5/3}}+\|v_{q+1}^2-v_q^2\|_{C_tW^{1/5,5/3}}\leq
M_L^{1/2}\delta_{q+1}^{1/2}a^{-\alpha/2},
\end{align}
and for $t\in (4\sigma_q\wedge T_L,T_L]$ we have
\begin{align}\label{p:gammap}
\big|\|v_{q+1}^2\|_{L^2}^2-\|v_q^2\|_{L^2}^2-3\gamma_{q+1}\big|\leq 7M_L\delta_{q + 1}.
\end{align}
\ep

Note that no bounds on $v^{1}_{q}$, $v^{1}_{q+1}$ were included in the statement of Proposition~\ref{p:iteration p}. Indeed, the definition of the new velocity $v^{2}_{q+1}$ does not require $v^{1}_{q+1}$. Then, having $v^{2}_{q+1}$ at hand, all the necessary bounds for $v^{1}_{q+1}$, $v^{\sharp}_{q+1}$ follow from  Section~\ref{s:v1vs} below. In particular, in Section~\ref{s:v1vs} we prove the following.

\bp\label{p:v1}
Under the assumptions of Proposition~\ref{p:iteration p}, it holds for $\kappa>0$ and $t\in[0,T_L]$
\begin{align}\label{estimatev12}
\|v_q^1\|_{C_{t,1/6}B_{5/3,\infty}^{1/3-2\kappa}}+\|v_q^1\|_{C^{1/6-\kappa}_{t,1/6}L^{5/3}}+\|v_q^1\|_{C_tL^2}\lesssim  a^{-\alpha/4}M_L^{1/2}+L^3N+L^4,
\end{align}
\begin{align}\label{vsharp1}
\|v_q^\sharp\|_{C_{t,3/10}B_{5/3,\infty}^{3/5-\kappa}}+\|v_q^\sharp\|_{C^{1/20}_{t,3/{10}} B_{5/3,\infty}^{11/20-2\kappa}}+\|v_q^\sharp\|_{C_tL^2}
\lesssim a^{-\alpha/8}M_L^{1/2}+L^5N+L^6,
\end{align}
\begin{align*}
\|v_{q+1}^1-v_q^1\|_{C_tL^2}+\|v_{q+1}^1-v_{q}^1\|_{C_tB_{5/3,\infty}^{1/3-\kappa}}+\|v_{q+1}^1-v_{q}^1\|_{C^{1/6-\kappa}_tL^{5/3}}\lesssim a^{-\alpha/2}M_{L}^{1/2}\delta^{1/2}_{q+1}+M_L^{1/2}\lambda_{q}^{-\theta/20},
\end{align*}
\begin{align*}
\|v_{q+1}^\sharp-v_q^\sharp\|_{C_{t,3/{10}}B_{5/3,\infty}^{3/5-\kappa}}+\|v_{q+1}^\sharp-v_q^\sharp\|_{C_tL^{2}}\lesssim a^{-\alpha/2} M_{L}^{1/2}\delta^{1/2}_{q+1}+M_L^{1/2}\lambda_{q}^{-\theta/20}.
\end{align*}
Here $\theta={10}/{21}$ and the implicit constants are always  universal and independent of $q$.
\ep

In the following we always use $\kappa>0$ to denote a small constant.

\begin{remark}
The best regularity we can expect for $v^1$ is $1/2-\kappa$ whereas for $v^\sharp$ it is $1-\kappa$. It will be seen in Section~\ref{s:v1vs} that their integrability is determined by $v^{2}$ and hence it comes from the convex integration argument. Here, we observe a competition between regularity and integrability, cf. \eqref{induction wp ps} and \eqref{induction wtp} and their proofs in Section~\ref{s:743} and Section~\ref{s:745}. For convenience, we have chosen integrability  $5/3$ and space regularity $1/3-2\kappa$ for $v^1$ and $3/5-\kappa$ for $v^\sharp$. The time weights are dictated by the desired space regularity as the initial value for $v^{1}_{q}$ and $v^{\sharp}_{q}$ only belongs to  $L^{2}$. We also note that  the bounds in Proposition~\ref{p:v1} do not  rely on the $W^{2/3,p}$ estimate of $v^{2}_{q}$ or the difference $v^{2}_{q+1}-v^{2}_{q}$. Indeed, this is only needed to make sense of the resonant product $v^{2}\varocircle z$ in the limit and to control the corresponding part of the Reynolds stress.
\end{remark}

We intend to start the iteration from $v_{0}^2\equiv 0$ on $[0,T_{L}]$. Then \eqref{inductionv2p}, \eqref{inductionv psp} and \eqref{inductionv C1p} hold. In that case, $\mathring{R}_0$ is the trace-free part of the matrix
\[ v_0^1  \otimes v_0^1 +V^{2}_{0} +V_{0}^{2,*}\]
where
\[
V^{2}_{0} =  v_0^1  \otimes \Delta_{\leqslant R} z^{\<20>}+ v_0^1  \left( \oprec + \osucc
\right) \Delta_{\leqslant R} z
-\mathbb{P} [v_0^1 \prec \Delta_{\leqslant R} \mathcal{I} \nabla z]
\varocircle z  - ([\mathbb{P}, v_0^1 \prec] \mathcal{I} \nabla z)
\varocircle \Delta_{\leqslant R} z \]
\[ - \tmop{com} (v_0^1, \mathbb{P}\mathcal{I} \nabla z, \Delta_{\leqslant R}
z)
- v_0^1 \cdot \Delta_{\leqslant R} z^{\<101>} + v_0^{\sharp}
\varocircle \Delta_{\leqslant R} z.
\]
By \eqref{estimatev12} and \eqref{vsharp1}, paraproduct estimates Lemma \ref{lem:para}, commutator estimates Lemmas \ref{lem:com1}, \ref{lem:com2}, we have
	\begin{align}\label{R0}
	\|R_{0}(t)\|_{L^1}&\lesssim \|v_0^1\|_{L^2}(\|\Delta_{\leq R}z^{\<20>}\|_{C^\kappa}+\|\Delta_{\leq R}z^{\<101>}\|_{C^\kappa}+\|\Delta_{\leq R}z\|_{C^{\kappa}})\\
	&\quad+\|v_0^1\|_{L^2}^2+\|v_0^\sharp\|_{L^{2}}\|\Delta_{\leq R}z\|_{C^{\kappa}}+\|v_0^1\|_{L^2}\|z\|_{C^{-1/2-\kappa}}\|\Delta_{\leq R}z\|_{C^{-1/2+2\kappa}}\nonumber
	\\&\lesssim a^{-\alpha/8}M_L+L^6N^2+L^8+L(a^{-\alpha/8}M_L^{1/2}+L^5N+L^6)2^{(1/2+2\kappa)R}\nonumber
	\\&\leq M_L.\nonumber
	\end{align}
Here we used the value of $R$ from \eqref{c:R} in Section \ref{s:v1vs} and the implicit constant can be absorbed by taking $a$ and $L$ large enough. Thus \eqref{eq:Rp} as well as \eqref{bd:Rp} are satisfied on the level  $q=0$, since $\delta_{1}=1$.

We deduce the following result.

\bt\label{thm:6.1}
There exists a $\mathbf{P}$-a.s. strictly positive stopping time $T_L$, arbitrarily large by choosing $L$ large, such that for any $\mathcal{F}_0$-measurable divergence free initial condition $v_0\in L^2$ $\mathbf{P}$-a.s.
the following holds true: There exists an $(\mathcal{F}_t)_{t\geq0}$-adapted process $(v^1,v^2,v^\sharp)$ such that for $\kappa>0$
$$
	\begin{aligned}
	v^{1}&\in C([0,T_L];L^2)\cap L^1(0,T_L;B_{5/3,\infty}^{1/3-2\kappa})\cap C^{1/6-\kappa}_{T_L,1/6}L^{5/3},\\
	v^{2}& \in L^p(0,T_L;L^2)\cap C([0,T_L],W^{2/3,1}\cap W^{1/5,5/3})\cap C^{1/10}_{T_L}L^{5/3},\\
	v^{\sharp}& \in C([0,T_L];L^2)\cap L^1(0,T_L;B^{3/5-\kappa}_{5/3,\infty}),
	\end{aligned}
	$$  $\mathbf{P}$-a.s. for all $p\in[1,\infty)$, and it is an analytically weak solution to \eqref{main4} with $v^1(0)=v_0,$ $ v^2(0)=0$ and satisfying \eqref{eq:para2}.  Furthermore, there are infinitely many such solutions and also infinitely many paracontrolled solutions $(h,\vartheta)=(v^1+v^2,v^\sharp+v^2+\mathbb{P}[(v^1+v^2)\prec \Delta_{\leq R}\mathcal{I}\nabla z])$ on $[0,T_L]$ satisfying
	\begin{align*}
	h&\in C([0,T_L];L^{5/3})\cap L^1(0,T_L;B_{5/3,\infty}^{1/5})\cap C^{1/10}_{T_L,1/6}L^{5/3}\cap L^p(0,T_L;L^2),\\
	\vartheta& \in C([0,T_L];L^{5/3})\cap L^1(0,T_L;B^{3/5-\kappa}_{1,\infty}),
	\end{align*}
	$\mathbf{P}$-a.s. for all $p\in[1,\infty)$, where $R$  depends on $L$ and is chosen in \eqref{c:R} below.
\et

\begin{proof}
	Letting $v_{0}^2\equiv 0$, we repeatedly apply Proposition~\ref{p:iteration p} and obtain $(\mathcal{F}_{t})_{t\geq0}$-adapted processes $(v_q^1,v_{q}^2,\mathring{R}_{q})$, $q\in\mathbb{N}$, such that  $$v_q^2 \to v^2\quad\mbox{in}\quad C([0,T_L],W^{2/3,1}\cap W^{1/5,5/3})\cap C^{1/10}_{T_L}L^{5/3}$$ as a consequence of  \eqref{induction wp ps}, \eqref{induction wtp} and \eqref{inductionv2p}. In view of Proposition~\ref{p:v1}, it follows that $$v_q^1\to v^1 \quad\mbox{in}\quad C([0,T_L];L^2)\cap L^1(0,T_L;B_{5/3,\infty}^{1/3-2\kappa})\cap C^{1/6-\kappa}_{T_L,1/6}L^{5/3},$$
	$$
	v_q^\sharp \to v^\sharp\quad\mbox{in}\quad C([0,T_L];L^2)\cap L^1(0,T_L;B^{3/5-\kappa}_{5/3,\infty}).
	$$
Then $(v^1,v^2,v^\sharp)$ are $(\mathcal{F}_t)_{t\geq0}$-adapted.
	Moreover, using {\eqref{iteration psp}} we
	have for every $p\in[1,\infty)$
	$$
	\int_0^{T_L}  \| v_{q + 1}^2 - v_q^2 \|_{L^2}^p d t
	\leqslant \int_{\sigma_q / 2 \wedge T_L}^{4 \sigma_q \wedge
		T_L} \| v_{q + 1}^2- v_q^2 \|_{L^2}^p d t +
	\int_{4 \sigma_q \wedge T_L}^{T_L} \| v_{q + 1}^2 - v_q^2 \|_{L^2}^p d t
	$$
	$$ \lesssim \int_{\sigma_q / 2 \wedge T_L}^{4 \sigma_q \wedge T_L}
	M^p_0 ((M_L(1+3q))^{1/2} + \gamma^{1/2}_{q + 1})^{p} d t
	+ \int_{4 \sigma_q \wedge T_L}^{T_L} M_0^p (M_L^{1/2} \delta^{1/2}_{q + 1}
	+ \gamma^{1/2}_{q + 1})^{p} d t
	$$
	$$ \lesssim M_{0}^{p}\left( 2^{- q} ((M_L(1+3q))^{1/2}  + \gamma_{q+1}^{1/2})^{p}
	+ T_{L}(M_L^{1 / 2}\delta_{q+1}^{1/2} + \gamma_{q+1}^{1/2})^{p} \right).
	$$
	Thus, the sequence $v^{2}_{q}$, $q\in\mathbb{N}$, is  Cauchy hence converging in $L^{p}(0,T_{L};L^{2})$ for all $p\in[1,\infty)$.
	Accordingly, $v^{2}_q\to v^{2}$ also in $L^p(0,T_{L};L^{2})$.
	Furthermore, by \eqref{eq:Rp}, \eqref{bd:Rp} we know for all $p\in[1,\infty)$
	$$
	\int_0^{T_L}\|\mathring{R}_q(t)\|^{p}_{L^1}d t\lesssim M_L^{p}\delta^{p}_{q+1}T_L+(M_L(1+3q))^{p}2^{-q} \to 0, \quad\mbox{as}\quad q\to\infty.
	$$
	Thus, the process $(v^1,v^2,v^\sharp)$ satisfies \eqref{main4} and \eqref{eq:para2} before $T_L$ in the analytically weak sense. Since $v^{1}_{q}(0)=v_{0}$ and $v_q^2(0)=0$ for all $q\in\mathbb{N}_{0}$ we deduce that $v^1(0)=v_0$ and $v^2(0)=0$. Thus $(h,\vartheta)$ defined above solves \eqref{solution} in the sense of Definition \ref{def:sol}.
	
	Next, we prove non-uniqueness of  the constructed solutions.	In view of  \eqref{p:gammap}, we have on $t\in (4\sigma_{0}\wedge T_L,T_L]$
	\begin{align}\label{eq:Kp}
	\begin{aligned}
	\big|\|v^2\|_{L^2}^2-3K\big|&\leq \left|\sum_{q=0}^\infty(\|v_{q+1}^2\|_{L^2}^2-\|v_{q}^2\|_{L^2}^2-3\gamma_{q+1})\right|+3\sum_{q\neq2}\gamma_{q+1}
	\\
	&\leq 7M_L\sum_{q=0}^{\infty}\delta_{q+1}+3\sum_{q\neq 2}\gamma_{q+1}\leq 7M_L\sum_{q=0}^{\infty}\delta_{q+1}+3\sum_{q\neq 2}\gamma_{q+1}\leq c,
	\end{aligned}
	\end{align}
	where the constant $c>0$ is independent of $K$ and the parameters $a$, $\alpha$.
	This implies non-uniqueness by choosing different $K$.  More precisely, for a given  $L\geq 1$ {sufficiently large} it holds $\mathbf{P}(4\sigma_{0}<T_{L})>0$. The parameters $L,N$  determine $M_{L}(N)$ and consequently by choosing different $K=K(L,N)$ and $K'=K'(L,N)$ so that $3|K-K'|>2c$ we deduce that the corresponding solutions $v^{2}_{K}$ and $v^{2}_{K'}$ have different $L^{2}$-norms on the set
	$\{4\sigma_0<T_L\}.$ We claim that  the sums $v^1_K+v^{2}_{K}$ and $v^1_{K'}+v^{2}_{K'}$ are different as well. Indeed, it is easy to see from \eqref{estimatev12} and \eqref{eq:Kp} that
	$$\sqrt{3K-c}-M_L^{1/2}\leq \|v^1+v^2\|_{L^2}\leq M_L^{1/2}+\sqrt{3K+c}.$$
	Choosing $K'$ such that $\sqrt{3K'-c}-M_L^{1/2}> M_L^{1/2}+\sqrt{3K+c}$ gives  different solutions.
	
	For a general divergence free initial condition $v_{0}\in L^{2}$ $\mathbf{P}$-a.s., we define
	$$\Omega_{N}:=\{N-1\leq \|v_{0}\|_{L^{2}}< N\}\in\mathcal{F}_{0}.$$ Then the first part of this proof gives the existence of infinitely many paracontrolled  solutions $(h^{N},\vartheta^{N})$ on each $\Omega_{N}$. Letting
	$$
	h:=\sum_{N\in\mathbb{N}}h^{N}1_{\Omega_{N}},\qquad \vartheta:=\sum_{N\in\mathbb{N}}\vartheta^{N}1_{\Omega_{N}}
	$$ concludes the proof. Note that this also uses the fact that the stochastic  objects are defined in advance and then the rest of the construction proceeds  pathwise.
\end{proof}

By an argument similar to \cite[Theorem 1.1]{HZZ21} we may extend the paracontrolled solutions obtained in Theorem~\ref{thm:6.1} by other paracontrolled solutions in order to obtain global existence and non-uniqueness.

\bt\label{th:ma4}
Let $v_{0}\in L^2$ $\mathbf{P}$-a.s. be an $\mathcal{F}_0$-measurable divergence free initial condition.
There exist infinitely many  paracontrolled solutions $(h,\vartheta)$ to \eqref{solution}  on $[0,\infty)$. Moreover, it holds
$$
h\in L^p_{\rm loc}([0,\infty);L^2)\cap C([0,\infty),L^{5/3})\quad \mathbf{P}\mbox{-a.s. for all }\,p\in[1,\infty).
$$
\et

\begin{proof}
	By Theorem~\ref{thm:6.1} we constructed a paracontrolled solution $h=v^{1}+v^{2}$ before the stopping time $T_L$ starting from the given initial condition $h(0)=v_0\in L^{2}$ $\mathbf{P}$-a.s. Since $T_L>0$ $\mathbf{P}$-a.s., we know that for $\mathbf{P}$-a.e. $\omega$ there exists $q_0(\omega)$ such that $4\sigma_{q_0(\omega)}<T_L(\omega)$.
	By \eqref{iteration psp} we find
	\begin{align*}
	\|v^2(T_L)\|_{L^2}&\leq \sum_{0\leq q< q_0}\|v_{q+1}^2(T_L)-v_q^2(T_L)\|_{L^2}+\sum_{ q\geq q_0}\|v_{q+1}^2(T_L)-v_q^2(T_L)\|_{L^2}
	\\
	&\lesssim M_0q_0(M_L(1+q_0))^{1/2}+M_0(K^{1/2}+1)+M_0M_L^{1/2}<\infty.
	\end{align*}
	This implies that $\|v^2(T_L)\|_{L^2}<\infty$ $\mathbf{P}$-a.s. Since also $v^{1}(T_{L})\in L^{2}$ $\mathbf{P}$-a.s., we can use the value $(v^1+v^2)(T_{L})$ as a new initial condition for $h$ in Theorem~\ref{thm:6.1}.

	More precisely, we consider $\hat{h}(0)=(v^1+v^2)(T_{L})$ and define $\hat{z}(t)=z(t+T_L)$ and similarly we define the stochastic objects
	$$
	\hat{z}_1(t):=z^{\<20>}(t+T_{L}),\quad \hat{z}_1\otimes \hat{z}(t):=z^{\<21>}(t+T_{L}),\quad \hat{z}\otimes \hat{z}_1 (t):=z^{\<21l>}(t+T_{L}),
	$$
	$$
	\hat{z}^{\<2020>}(t):=z^{\<2020>}(t+T_{L}),\quad \cI(\nabla \hat{z})(t):=\cI_{T_{L},T_{L}+t}(\nabla z),
	$$
	$$
	\mathbb{P}\cI(\nabla \hat{z})\varocircle \hat{z}(t)=\mathbb{P}\cI_{T_{L},T_{L}+t}(\nabla {z})\varocircle {z}(t+T_{L})= z^{\<101>}(t+T_{L};T_{L})
	,$$
	$$
	\mathbb{P}\mathcal{I}(\div( \hat{z}\otimes \hat{z}_1+\hat{z}_1\otimes \hat{z}))\varocircle \hat{z}(t)=\mathbb{P}\mathcal{I}_{T_{L},T_{L}+t}(\div( z^{\<21l>}+z^{\<21>}))\varocircle {z}(t+T_{L})=z^{\<2111>}(t+T_{L};T_{L}).
	$$
	Then we define stopping time $\hat{T}_L$ similar as in \eqref{stopping time3} with $T^4_L$ and $T^{5}_{L}$ replaced by
\begin{align*}
\hat{T}^4_L&:=\inf\left\{t\geq0,\|z^{\<101>}(T_L+t;T_L)\|_{C^{-\kappa}}\geq L\right\}\wedge\inf\left\{t\geq0, \|z^{\<101>}\|_{C_{t+T_L}^{1/10}C^{-1/5-\kappa}}\geq L\right\},
\\\hat{T}^5_L&:= \inf\left\{t\geq0, \|z^{\<2111>}(T_L+t;T_L)\|_{C^{-\kappa}}\geq L\right\}.
\end{align*}
Then $\hat{T}_{L+1}\geq T_{L+1}-T_L$.

	Consequently, we obtain solutions
	$$
	\left(\hat{h}=\hat{v}^1+\hat{v}^2, \hat{\vartheta}=\hat{v}^\sharp+\hat{v}^2+\mathbb{P}[(\hat{v}^1+\hat{v}^2)\prec\Delta_{\leq R}\cI(\nabla \hat{z})]\right)
	$$ before $\hat{T}_{L+1}$ adapted to $\mathcal{F}_{t+T_L}$. Here $R$ is chosen as in \eqref{c:R} in Section~\ref{s:v1vs} but in terms of  $L+1$ instead of $L$. Moreover, by Proposition \ref{p:v1} and \eqref{inductionv2p} it holds that $\hat{h}(0)\in B_{5/3,\infty}^{1/5}$. Hence, there is no singularity near zero of $\hat{h}$  and similarly as in Section \ref{s:v1vs} we obtain $\hat{h}\in C_T^{1/10}L^{5/3}$. Then we set $h(t)=(v^1+v^2)1_{t\leq T_L}+\hat{h}(t-T_L)1_{T_{L+1}\geq t>T_L}$.
Then for $p\geq1$
\begin{align*}
	h&\in C([0,T_{L+1}];L^{5/3})\cap L^1(0,T_{L+1};B_{5/3,\infty}^{1/5})\cap C^{1/10}_{T_{L+1},1/6}L^{5/3}\cap L^p(0,T_{L+1};L^2).
	\end{align*} By the same argument as in the proof of \cite[Theorem 1.1]{HZZ21}, $h$ is adapted to $(\mathcal{F}_t)_{t\geq0}$ and satisfies the equation \eqref{solution} before $[0,T_{L+1}]$. Indeed, we have for $t>T_L$
	\begin{align*}h(t)=(v_1+v_2)(T_L)-\mathbb{P}\int_{T_L}^te^{(t-s)\Delta}\Big[\div\Big(&h\otimes h+z^{\<20>}\otimes h+h\otimes z^{\<20>}+z^{\<2020>}\\&+h\otimes z+z\otimes h+z^{\<21l>}+z^{\<21>}\Big)\Big]\dif s,
	\end{align*}
	with the paracontrolled ansatz
	\begin{align*}
	h(t)=- \mathbb{P}[h(t) \prec  \mathcal{I}_{T_L,t} \nabla z] +
	\hat{\vartheta}(t-T_L) -\mathcal{I}_{T_L,t}(z^{\<21l>}+z^{\<21>}),
	\end{align*}
	and for $s>T_L$
	\begin{align*}
	h \varocircle z(s) &= - \mathbb{P} [h\prec  \mathcal{I}_{T_L,s}
	\nabla z] \varocircle z(s) + \hat{\vartheta}(s-T_L)\varocircle z(s) - z^{\<2111>}(s;T_{L})
	\\ &=-  ([\mathbb{P}, h \prec] \mathcal{I}_{T_L,s} \nabla z) \varocircle z(s) -
	\tmop{com} (h(s), \mathbb{P}\mathcal{I}_{T_L,s} \nabla z, z(s)) -  h(s) \cdot z^{\<101>}(s;T_{L}) \nonumber
	\\& \qquad+\hat{ \vartheta}(s-T_L) \varocircle z(s) -z^{\<2111>}(s;T_{L}).
	%
	\end{align*}
	Now, we define
	\begin{align*}
	\vartheta& =\left(v^\sharp+v^2+ \mathbb{P}[(v^1+v^2)\prec\Delta_{\leq R}\cI(\nabla z)]\right)1_{t\leq T_L}
	\\&\qquad+\left(\hat{\vartheta}(t-T_L)+  \mathbb{P}[h(t) \prec e^{(t-T_L)\Delta} \mathcal{I}\nabla z(T_L)] +e^{(t-T_L)\Delta} (z^{\<21l1>}+z^{\<211>})(T_L)\right)1_{t>T_L}.
	\end{align*}
	It is easy to see that $\vartheta\in C([0,T_{L+1}],L^{5/3})\cap L^1(0,T_{L+1},B^{3/5-\kappa}_{1,\infty})$.
	Then, for $t,s>T_L$	it holds
	\begin{align*}
	h(t)=-  \mathbb{P}[h(t) \prec  (\mathcal{I}\nabla z)(t)] +
	\vartheta(t) -(z^{\<21l1>}+z^{\<211>})(t),
	\end{align*}
	\begin{align*}
	h \varocircle z(s)  =& -  ([\mathbb{P}, h(s) \prec] (\mathcal{I} \nabla z)(s)) \varocircle z(s) -
	\tmop{com} (h(s), \mathbb{P}(\mathcal{I} \nabla z)(s), z(s)) -  h(s)\cdot z^{\<101>}(s) \nonumber
	\\& + \vartheta(s)\varocircle z(s)- z^{\<2111>}(s).
	\end{align*}
Here we used that from the renormalization it holds
$$
z^{\<2111>}(s)=z^{\<2111>}(s,T_L)+e^{(s-T_L)\Delta}(z^{\<21l1>}+z^{\<211>})(T_L)\varocircle z(s)
$$ and similarly for other terms.

	Thus,  $(h,\vartheta)$ satisfies the equation \eqref{solution}, as well as \eqref{anastz} and \eqref{product} before $T_{L+1}$.
	Now, we can iterate the above steps, i.e. starting from $h(T_{L+k})$ and constructing solutions $( h_{k+1},\vartheta_{k+1})$ before the stopping time $T_{L+k+1}$.
	Define $ h=h_11_{t\leq T_L}+\sum_{k=1}^\infty h_k1_{\{T_{L+k-1}< t\leq T_{L+k} \}},$
	and obtain that $h\in L^p_{\rm{loc}}([0,\infty); L^2)\cap C([0,\infty);L^{5/3}) $, for all $p\in[1,\infty)$. Similarly, we define $\vartheta$ and we obtain that $(h,\vartheta)$ is a paracontrolled solution. We emphasize that $h$ does not blow up at any finite time $T$ since for any time $T$ we could find $k_0$ such $T\leq T_{L+k_0}$ and the infinite sum becomes  a finite sum. The desired norm of $h$ only depends on $L$, $k_0$ and the initial data.
	Furthermore, as in the proof of Theorem~\ref{thm:6.1} we obtain infinitely many such solutions by choosing different $K$.
\end{proof}

\begin{corollary}\label{cor}
	Let $v_{0}\in L^2\cup C^{-1+\kappa}$ with $\kappa>0$ $\mathbf{P}$-a.s. be a $\mathcal{F}_0$-measurable divergence free  initial condition. Then there exist infinitely many  paracontrolled solutions $(h,\vartheta)$ to \eqref{solution}  on $[0,\infty)$.
\end{corollary}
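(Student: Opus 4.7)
The strategy is to reduce to the $L^{2}$-initial data case covered by Theorem~\ref{th:ma4} by first using the local well-posedness theory from \cite{ZZ15} to gain regularity from a possibly irregular initial condition. First I would split the probability space into
\[
\Omega_{1}=\{v_{0}\in L^{2}\}\cap\{v_{0}\text{ divergence free}\}\in\cF_{0},\qquad \Omega_{2}=\Omega\setminus\Omega_{1},
\]
so that on $\Omega_{2}$ we have $v_{0}\in C^{-1+\kappa}$ $\mathbf{P}$-a.s.\ with $v_{0}\notin L^{2}$. On $\Omega_{1}$ the conclusion follows directly from Theorem~\ref{th:ma4}, giving infinitely many paracontrolled solutions $(h,\vartheta)$ on $[0,\infty)$, adapted to $(\cF_{t})_{t\geq 0}$.

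The work is on $\Omega_{2}$. Here I would invoke Theorem~1.1 of \cite{ZZ15}: starting from $v_{0}\in C^{-1+\kappa}$, there exists a $\mathbf{P}$-a.s.\ strictly positive stopping time $\sigma>0$ and a unique local paracontrolled solution $(h,\vartheta)$ to \eqref{solution} on $[0,\sigma)$ with $h(0)=v_{0}$, adapted to $(\cF_{t})_{t\geq 0}$, enjoying the paracontrolled ansatz \eqref{anastz}--\eqref{product}. Using the Schauder regularization of the heat semigroup applied to \eqref{anastz} (combined with Proposition~\ref{p:sto} to control the stochastic remainders $z^{\<21l1>}$, $z^{\<211>}$ and the paraproduct $\mathbb P[h\prec\cI\nabla z]$), I would show that for every $0<t<\sigma$ the value $h(t)$ lies in $L^{2}$, in fact in $C^{\eta'}$ for some $\eta'>0$. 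Choose a deterministic sequence $t_{k}\downarrow 0$ and define the stopping time
\[
t_{\star}:=\sup\{t_{k}:t_{k}<\sigma/2\}\wedge 1,
\]
which is $\cF_{t_{\star}}$-measurable, $\mathbf{P}$-a.s.\ strictly positive, and such that $h(t_{\star})\in L^{2}$ $\mathbf{P}$-a.s.

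Next I would restart the equation at time $t_{\star}$ with the new, now $L^{2}$-valued initial datum $\hat v_{0}:=h(t_{\star})$, using the time-shifted noise $\hat B_{s}:=B_{s+t_{\star}}-B_{t_{\star}}$ and the correspondingly time-shifted stochastic objects obtained by the same renormalization procedure as in Section~\ref{s:stoch} (this step is analogous to what is carried out in the proof of Theorem~\ref{th:ma4}). Applying Theorem~\ref{th:ma4} to the shifted system with initial condition $\hat v_{0}$ yields infinitely many global paracontrolled solutions $(\hat h,\hat\vartheta)$ on $[0,\infty)$, parametrized by the constant $K$ from \eqref{eq:Kp}, and adapted to $(\cF_{t+t_{\star}})_{t\geq 0}$. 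Concatenating,
\[
h:=h\,\mathbf 1_{[0,t_{\star}]}+\hat h(\cdot-t_{\star})\,\mathbf 1_{(t_{\star},\infty)},
\]
and defining $\vartheta$ analogously (taking care, exactly as in the proof of Theorem~\ref{th:ma4}, to reconcile the paracontrolled ansatz across $t_{\star}$ via $\mathbb P[h(t)\prec e^{(t-t_{\star})\Delta}\cI\nabla z(t_{\star})]$ and the renormalized objects at time $t_{\star}$), one obtains infinitely many global paracontrolled solutions on $\Omega_{2}$ adapted to $(\cF_{t})_{t\geq 0}$. Gluing the $\Omega_{1}$ and $\Omega_{2}$ constructions via $h=h^{(1)}\mathbf 1_{\Omega_{1}}+h^{(2)}\mathbf 1_{\Omega_{2}}$ (permissible since the stochastic objects are defined pathwise and $\Omega_{1}\in\cF_{0}$) completes the proof.

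The main obstacle I would expect is not the local-to-global patching itself — that mirrors the argument already carried out in the proof of Theorem~\ref{th:ma4} — but rather verifying that, after restart at $t_{\star}$, the glued pair $(h,\vartheta)$ still satisfies the paracontrolled ansatz \eqref{anastz} and the product identity \eqref{product} with respect to the \emph{original} stochastic objects $z^{\<21l1>}, z^{\<211>}, z^{\<2111>}, z^{\<101>}$ (not the shifted ones). This requires exploiting the semigroup decomposition $\cI=\cI_{0,t_{\star}}+e^{(\cdot-t_{\star})\Delta}\cI_{0,t_{\star}}$ at $t_{\star}$ and the renormalization identities between $z^{\<2111>}(s)$ and $z^{\<2111>}(s;t_{\star})$ (and similarly for $z^{\<101>}$), exactly as in the proof of Theorem~\ref{th:ma4}. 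Non-uniqueness on $\Omega_{2}$ follows from the freedom in the parameter $K$ in Theorem~\ref{thm:6.1}, producing solutions with distinct $L^{2}$-norms on a set of positive probability after $t_{\star}$.
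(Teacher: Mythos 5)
Your proposal is correct and follows essentially the same route as the paper: use the local paracontrolled solution of \cite{ZZ15} to regularize the initial datum (the paper notes $h(\sigma)\in C^{1/2-\kappa}\subset L^2$ at the stopping time $\sigma$), then restart via Theorem~\ref{th:ma4} and glue, with the reconciliation of the paracontrolled ansatz across the restart time handled exactly as in the proof of Theorem~\ref{th:ma4}. Your choice of a discretized restart time $t_\star<\sigma/2$ instead of restarting at $\sigma$ itself, and the explicit $\Omega_1/\Omega_2$ splitting, are only cosmetic variations on the paper's two-line argument.
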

\begin{proof} If $v_{0}\in C^{-1+\kappa}$ with $\kappa>0$, by \cite{ZZ15} there exists a stopping time $0<\sigma\leq T_L$ and a local paracontrolled solution $(h,\vartheta)$  to \eqref{solution} in the sense of Definition \ref{def:sol}. Now, $h(\sigma)\in C^{1/2-\kappa}$, $ \kappa>0$, and we can start from $h(\sigma)$ and obtain infinitely many global paracontrolled solutions by using Theorem~\ref{th:ma4}. Moreover, there is no singularity at $h(\sigma)$ and $h\in C_{T,1/2-\kappa}^{1/10}L^{5/3}$. Note that due to singularity at zero, we only have $h\in L^2(0,T,L^2)\cap C^{1/10}_{T,1/2-\kappa}L^{5/3}\cap C_TH^{-1}\cap L^1(0,T,B^{1/5}_{5/3,\infty})$.
\end{proof}

Accordingly, Theorem~\ref{thm:1.1} is proved.

Finally, by exactly the same argument as in \cite[Corollary 1.2]{HZZ21}, Corollary~\ref{cor:1.2} follows.

\section{Estimate of $v_{q}^1$ and $v^\sharp_q$} \label{s:v1vs}

In this section, we work under the assumptions of Proposition~\ref{p:iteration p}. The main aim is to prove the  bounds \eqref{estimatev12}, \eqref{vsharp1} as well as for $\kappa>0$ and $t\in[0,T_L]$
\begin{align}
\label{est:vq}
&\|v_{q+1}^1-v_q^1\|_{C_tL^2}+\|v_{q+1}^1-v_{q}^1\|_{C_tB_{5/3,\infty}^{1/3-2\kappa}}+\|v_{q+1}^1-v_{q}^1\|_{C^{1/6-\kappa}_tL^{5/3}}\nonumber
\\&\lesssim\|v_{q+1}^2-v_q^2\|_{C_tB^{1/5}_{5/3,\infty}}+\|v_{q+1}^2-v_q^2\|_{C^{1/10}_tL^{5/3}}+M_L^{1/2}\lambda_{q}^{-\theta/20},
\end{align}
\begin{align}\label{vsharp2}
\begin{aligned}
&\|v_{q+1}^\sharp-v_q^\sharp\|_{C_{t,3/{10}}B_{5/3,\infty}^{3/5-\kappa}}+\|v_{q+1}^\sharp-v_q^\sharp\|_{ C_tL^{2}}
\\&\lesssim \|v_{q+1}^2-v_q^2\|_{C_tB^{1/5}_{5/3,\infty}}+\|v_{q+1}^2-v_q^2\|_{C^{1/10}_tL^{5/3}}+M_L^{1/2}\lambda_{q}^{-\theta/20},
\end{aligned}
\end{align}
and
\begin{align}
\|v_q^1\|_{C_{t,3/8}L^4}
\lesssim M_L^{1/2}\lambda_{q}^{\theta(7/10+2\kappa)}.
\label{estimatev42}
\end{align}
As a consequence of \eqref{induction wtp}, this proves Proposition~\ref{p:v1}.
Moreover, we recall that the equation for $v^{1}_{q}$ is linear.
Hence, for a given $v_q^2$ we obtain the existence and uniqueness of solution $v^{1}_{q}$ to \eqref{induction3} by a fixed point argument together with the uniform estimate derived in the sequel. Also, if $v^{2}_{q}$ is $(\mathcal{F}_t)_{t\geq0}$-adapted, so are $(v^1_q,v^\sharp_q)$. This in particular gives the existence of $v^{1}_{q+1}$ in Proposition~\ref{p:iteration p}, once the new velocity $v^{2}_{q+1}$ was constructed in Section~\ref{ss:it2}.

In the following, we make use of  the localizers $\Delta_{>R}$ present in the equation for $v^{1}_{q}$ in \eqref{induction3}. Namely, by an appropriate choice of $R$ we can always apply \eqref{eq:loc} to get a small constant in front of terms which contain $v^{1}_{q}$. We are therefore able to absorb them into the left hand sides of the estimates without a Gronwall argument. Due to singularity at $t=0$, we use the weighted in time norms $C_{t,\gamma}$ for several different $\gamma\geq 0$, see Section~\ref{sec:pre} for their definition. In the following, all the estimates are pathwise and valid  before the stopping time $T_L$.

\subsection{Estimate of $v^{1}_{q}$ in $C_{t,1/6}B^{1/3-2\kappa}_{5/3,\infty}$ and $ C^{1/6-\kappa}_{t,1/6}L^{5/3}$}
\label{s:6.1}

We intend to apply  Lemma \ref{l:sch2} and notice that by Remark~\ref{r:b.5} each application yields a factor $L$, independently of the chosen time weights in the range $\gamma,\delta\in \{0,1/6,3/10\}$. However, we note that the difficult terms for Lemma~\ref{l:sch2} are those where we need to {decrease} the weight, i.e. $\gamma<\delta$. For those we need to make sure that the condition
\begin{equation}\label{eq:c1}
\gamma-\delta-\alpha/2+\beta/2+1> 0
\end{equation}
from Lemma~\ref{l:sch2} is satisfied. It will be seen below that this is always achieved since these terms do not require such a gain in space regularity, i.e. the difference $\alpha-\beta$  compensates the negativity of the difference $\gamma-\delta$.

Hence, we shall bound each term appearing in $V_q^1$ as well as $z^{\<2020>}$ in appropriate (possibly time-weighted) function spaces with spatial regularity at least $B^{-2/3-\kappa}_{5/3,\infty}$. The terms in $V^{1,*}_{q}$ are estimated the same way.

Recall that by the definition of the stopping time \eqref{stopping time3} we have
\begin{align*}
\|z\|^{2}_{C_{t}C^{-1/2-\kappa}}+\|z^{\<2020>}\|_{C_tC^{-\kappa}}+\|z^{\<21>}\|_{C_tC^{-1/2-\kappa}}+\|z^{\<101>}\|_{C_{t}C^{-\kappa}}+\|z^{\<2111>}\|_{C_tC^{-\kappa}}\lesssim L.
\end{align*}
By the paraproduct estimate Lemma \ref{lem:para} we have
\begin{align*}
&\|(v_q^1+v_q^2)\oprec \Delta_{\leq f(q)}\Delta_{>R}z\|_{C_{t,1/6}B^{-2/3-\kappa}_{5/3,\infty}}
\\&\lesssim  \sup_{s\in [0,t]}s^{1/6}(\|v_q^1(s)\|_{L^{5/3}}+\|v_q^2(s)\|_{L^{5/3}})\|\Delta_{>R} z\|_{C_tC^{-2/3-\kappa}}
\\&\lesssim(\|v_q^1\|_{C_{t,1/6}L^{5/3}}+\|v_q^2\|_{C_{t,1/6}L^{5/3}})L2^{-R/6},
\end{align*}
\begin{align*}
&\|(v_q^1+v_q^2)\oprec \Delta_{\leq f(q)}\Delta_{>R}z^{\<20>}\|_{C_{t,1/6}B^{-1/6-\kappa}_{5/3,\infty}}+ \|(v_q^1+v_q^2)\prec \Delta_{\leq f(q)}\Delta_{>R}z^{\<101>}\|_{C_{t,1/6}B^{-1/6-\kappa}_{5/3,\infty}}
\\&\lesssim  \sup_{s\in [0,t]}s^{1/6}(\|v_q^1(s)\|_{L^{5/3}}+\|v_q^2(s)\|_{L^{5/3}})(\|\Delta_{>R} z^{\<20>}\|_{C_tC^{-1/6-\kappa}}+\|\Delta_{>R} z^{\<101>}\|_{C_tC^{-1/6-\kappa}})
\\&\lesssim(\|v_q^1\|_{C_{t,1/6}L^{5/3}}+\|v_q^2\|_{C_{t,1/6}L^{5/3}})L2^{-R/6},
\end{align*}
and
\begin{align*}
&\|v_q^1\osucccurlyeq \Delta_{>R}z^{\<20>}\|_{C_{t,1/6}B^{\kappa}_{5/3,\infty}}+\|v_q^1\succcurlyeq \Delta_{>R}z^{\<101>}\|_{C_{t,1/6}B^{\kappa}_{5/3,\infty}}+\| v_q^1\osucc\Delta_{\leq f(q)}\Delta_{>R}z\|_{C_{t,1/6}B^{-1/3-3\kappa}_{5/3,\infty}}
\\&\lesssim\sup_{s\in [0,t]}s^{1/6}\|v_q^1\|_{B_{5/3,\infty}^{1/3-2\kappa}}(\|\Delta_{>R}z^{\<20>}\|_{C_tC^{-1/6-\kappa}}+\|\Delta_{>R}z^{\<101>}\|_{C_tC^{-1/6-\kappa}}+\|\Delta_{>R} z\|_{C_tC^{-2/3-\kappa}})\\
&\lesssim\|v_q^1\|_{C_{t,1/6}B_{5/3,\infty}^{1/3-2\kappa}}L2^{-R/6}.
\end{align*}
For the two commutators we use the commutator estimates, Lemma~\ref{lem:com2} and Lemma~\ref{lem:com1}, to obtain
\begin{align*}
&\|([\mathbb{P},v_q^1\prec]\cI(\nabla z))\varocircle \Delta_{>R}z\|_{C_{t,1/6}B^{\kappa}_{5/3,\infty}}+\|\mathrm{com}(v_q^1,\mathbb{P}\cI(\nabla z),\Delta_{>R}z)\|_{C_{t,1/6}B^{\kappa}_{5/3,\infty}}
\\&\lesssim\|v_q^1\|_{C_{t,1/6}B_{5/3,\infty}^{1/3-2\kappa}}\|\Delta_{>R}z\|_{C_tC^{-2/3-\kappa}}\|z\|_{C_tC^{-1/2-\kappa}}\lesssim\|v_q^1\|_{C_{t,1/6}B_{5/3,\infty}^{1/3-2\kappa}}L2^{-{R}/6}.
\end{align*}

For the last term containing $v_q^\sharp$ we use the paraproduct estimate and Lemma~\ref{l:sch2}. In particular, since $v_{q}^{\sharp}$ requires a higher time weight $t^{3/10}$, we shall verify the condition \eqref{eq:c1}. It turns out that this is satisfied as
$ \gamma = 1 / 6$, $ \delta = 3 / 10$, $ \alpha = 1 / 3 - 2 \kappa$,
$ \beta = - 1 + 1 / 20 - 2 \kappa$
       hence \eqref{eq:c1} holds.
Accordingly, we obtain
\begin{align*}
& \| \mathcal{I} \tmop{div} (v^{\sharp}_q \varocircle \Delta_{> R}
   z) \|_{C_{t,1/6}B^{1 / 3 - 2 \kappa}_{5/3, \infty}}+\| \mathcal{I} \tmop{div} (v^{\sharp}_q \varocircle \Delta_{> R}
   z) \|_{C_{t,1/6}^{1/6-\kappa}L^{5/3}}\\
   & \lesssim L
   \| \tmop{div}
   (v^{\sharp}_q \varocircle \Delta_{> R} z) \|_{C_{t,3/10}B^{- 1 + 1 / 20 -
   2\kappa}_{5/3, \infty}} \\
& \lesssim L
   \| v^{\sharp}_q \varocircle \Delta_{> R} z
   \|_{C_{t,3/10}B^{1 / 20 - 2\kappa}_{5/3, \infty}} \\
& \lesssim L\| v^{\sharp}_q \|_{C_{t,3/10}B^{3 / 5-\kappa}_{5/3, \infty}} \|
   \Delta_{> R} z \|_{C_t C^{- 11 / 20 - \kappa}}\\
& \lesssim L^2\|v_q^\sharp\|_{C_{t,3/10}B_{5/3,\infty}^{3/5-\kappa}}2^{-R/20}.
\end{align*}
For the initial value part we have by Lemma \ref{lem:heat} and \eqref{eq:u0} for $t\in(0,T_{L}]$
\begin{align*}
&\|e^{t\Delta}v_0\|_{B_{5/3,\infty}^{1/3-2\kappa}}\lesssim L^{1/2}t^{-1/6+\kappa}N,
 \\&\|(e^{t\Delta}-e^{s\Delta})v_0\|_{L^{5/3}}\lesssim |t-s|^{1/6-\kappa}s^{-1/6+\kappa}N,\quad  |t-s|\leq1, \ 0<s<t.
\end{align*}

Summarizing all the above estimates and using Besov embedding Lemma \ref{lem:emb}, we obtain
\begin{equation}\label{vq1}
\aligned
&\|v_q^1\|_{C_{t,1/6}B_{5/3,\infty}^{1/3-2\kappa}}+\|v_q^1\|_{C^{1/6-\kappa}_{t,1/6}L^{5/3}}
\\&\lesssim L\Big(L+N+L2^{-R/20}(\|v_q^1\|_{C_{t,1/6}B_{5/3,\infty}^{1/3-2\kappa}}
+\|v_q^\sharp\|_{C_{t,3/10}B_{5/3,\infty}^{3/5-\kappa}})+L2^{-R/6}\|v_q^2\|_{C_{t,1/6}L^{5/3}}\Big).
\endaligned
\end{equation}
Here, we used \eqref{eq:u0} and, as mentioned above, the extra factor  $L$ comes from Lemma \ref{l:sch2}.

\subsection{Estimate of $v^{\sharp}_{q}$ in $C_{t,3/10}B^{3/5-\kappa}_{5/3,\infty}$ and $C^{1/20}_{t,3/10}B^{11/20-2\kappa}_{5/3,\infty}$}
\label{s:6.2}

Let us proceed with the estimate of $v^{\sharp}_{q}$. Here, there are no difficulties coming from changing the time weight as all the terms require either a lower or the same weight.
In view of \eqref{induction3}, the paracontrolled ansatz \eqref{vsharp}
and since
\[ \tmop{div} \left( (v_q^1 + v_q^2) \oprec \Delta_{\leqslant f (q)} \Delta_{>
   R} z \right) = \Delta_{\leqslant f (q)} \Delta_{> R} z \succ \nabla (v_q^1
   + v_q^2), \]
\[ \tmop{div} \left( \Delta_{\leqslant f (q)} \Delta_{> R} z \osucc (v_q^1 +
   v_q^2) \right) = (v_q^1 + v_q^2) \prec \nabla\Delta_{\leqslant f (q)} \Delta_{>
   R} z, \]
we obtain
\[ v^{\sharp}_q  = v^1_q +\mathbb{P} [(v^1_q + v^2_q) \prec \mathcal{I}
   (\nabla \Delta_{> R} \Delta_{\leqslant f (q)} z)] + \left(
   z^{\<21l1>} + z^{\<211>} \right) \]
\begin{equation}\label{eq:3}
 = v^1_q (0) -\mathcal{I}\mathbb{P} \tmop{div} \left( z^{\<2020>} +
   V^{\sharp}_q + V^{\sharp, \ast}_q \right)
   \end{equation}
\[ -\mathbb{P} [\mathcal{I}, (v^1_q + v^2_q) \prec] (\nabla \Delta_{> R}
   \Delta_{\leqslant f (q)} z) -\mathcal{I}\mathbb{P} [\nabla(v_q^1 + v_q^2) \prec
   \Delta_{\leqslant f (q)} \Delta_{> R} z], \]
where
\[ V_q^{\sharp} = (v_q^1 + v_q^2) \oprec \Delta_{\leqslant f (q)} \Delta_{> R}
   z^{\<20>} + v_q^1 \osucccurlyeq \Delta_{> R} z^{\<20>} + v_q^1 \osucc
   \Delta_{\leqslant f (q)} \Delta_{> R} z \]
\[ - ([\mathbb{P}, v_q^1 \prec]\mathcal{I} \nabla z) \varocircle \Delta_{> R} z
   - \tmop{com} (v_q^1, \mathbb{P}\mathcal{I} \nabla z, \Delta_{> R} z) - z^{\<2111>} \]
\[ - (v_q^1 + v_q^2) \prec \Delta_{\leqslant f (q)} \Delta_{> R} z^{\<101>}
   - v_q^1 \succcurlyeq \Delta_{> R} z^{\<101>} + v_q^{\sharp} \varocircle
   \Delta_{> R} z. \]
In the following we estimate each term on the right hand side of \eqref{eq:3}.

 From the above estimate we already know that $z^{\<2020>}$ as well as all the terms in $V^{\sharp}_{q}$ except for $v_q^\sharp\varocircle \Delta_{>R}z$ are bounded in  $B^{-1/3-3\kappa}_{5/3,\infty}$. %
We also showed by paraproduct estimates Lemma \ref{lem:para} that
\begin{align*}
\|v_q^\sharp\varocircle \Delta_{>R}z\|_{C_{t,3/10}B^{1/20-2\kappa}_{5/3,\infty}}
&\lesssim\|v_q^\sharp\|_{C_{t,3/10}B_{5/3,\infty}^{3/5-\kappa}}\|\Delta_{>R}z\|_{C_tC^{-11/20-\kappa}}\\
&\lesssim L^{1/2}\|v_q^\sharp\|_{C_{t,3/10}B_{5/3,\infty}^{3/5-\kappa}}2^{-{R}/{20}}.
\end{align*}
Moreover, Lemma \ref{lem:para} also implies
\begin{align*}
&\|\mathbb{P}[\nabla(v_q^1+v_q^2)\prec (\Delta_{> R}\Delta_{\leq f(q)}z)]\|_{C_{t,1/6}B^{-27/20-\kappa}_{5/3,\infty}}\\
&\lesssim\Big(\sup_{s\in [0,t]}s^{1/6}\|v_q^1(s)\|_{B_{5/3,\infty}^{1/3-2\kappa}}+L^{1/6}\|v_q^2\|_{C_tB^{1/5}_{5/3,\infty}}\Big)\|\Delta_{> R}z\|_{C_tC^{-11/20-\kappa}}
\\&\lesssim (\|v_q^1\|_{C_{t,1/6}B_{5/3,\infty}^{1/3-2\kappa}}L^{1/2}+L\|v_q^2\|_{C_tB^{1/5}_{5/3,\infty}})2^{-{R}/{20}},
\end{align*}
which can then be plugged in the Schauder estimate, Lemma~\ref{l:sch2}.
Next, we note that  Lemma \ref{commutator} can be applied to the remaining term in \eqref{eq:3} which also gives a factor of $L$. Hence, we use interpolation to have
\begin{align*}
&\|\mathbb{P}[\mathcal{I}, (v_q^1+v_q^2)\prec](\nabla\Delta_{>R}\Delta_{\leq f(q)}z)\|_{C_{t,3/10}B^{3/5}_{5/3,\infty}}\\
&\qquad+\|\mathbb{P}[\mathcal{I}, (v_q^1+v_q^2)\prec](\nabla\Delta_{>R}\Delta_{\leq f(q)}z)\|_{C^{1/20}_{t,3/10} B_{5/3,\infty}^{11/20-2\kappa}}
\\&\lesssim L(\|v_q^1\|_{C_{t,1/6}B^{1/3-2\kappa}_{5/3,\infty}}+\|v_q^2\|_{C_tB^{1/5}_{5/3,\infty}}+\|v_q^1\|_{C^{1/6-\kappa}_{t,1/6} L^{5/3}}+\|v_q^2\|_{C^{1/10}_tL^{5/3}})\|\Delta_{> R}z\|_{C_tC^{-11/20-\kappa}}
\\&\lesssim L^22^{-R/20}(\|v_q^1\|_{C_{t,1/6}B^{1/3-2\kappa}_{5/3,\infty}}+\|v_q^2\|_{C_tB^{1/5}_{5/3,\infty}}+\|v_q^1\|_{C^{1/6-\kappa}_{t,1/6} L^{5/3}}+\|v_q^2\|_{C^{1/10}_tL^{5/3}}).
\end{align*}
Combining the above estimates and using Schauder estimate Lemma \ref{l:sch2} and interpolation, we have
\begin{align*}
&\|v_q^\sharp\|_{C_{t,3/10}B_{5/3,\infty}^{3/5-\kappa}}+\|v_q^\sharp\|_{C^{1/20}_{t,3/10} B_{5/3,\infty}^{11/20-2\kappa}}\nonumber
\\&\lesssim L^2+LN+L^2 2^{-R/20}\Big(\|v_q^1\|_{C_{t,1/6}B^{1/3-2\kappa}_{5/3,\infty}}+\|v_q^2\|_{C_tB^{1/5}_{5/3,\infty}}\\
&\qquad+\|v_q^1\|_{C^{1/6-\kappa}_{t,1/6} L^{5/3}}+\|v_q^2\|_{C^{1/10}_tL^{5/3}}+\|v_q^\sharp\|_{C_{t,3/10}B_{5/3,\infty}^{3/5-\kappa}}\Big)+L^{2}2^{-R/6}\|v_q^2\|_{C_{t,1/6}L^{5/3}},
\end{align*}
which combined with \eqref{vq1} implies that
\begin{align*}
&\|v_q^1\|_{C_{t,1/6}B_{5/3,\infty}^{1/3-2\kappa}}+\|v_q^1\|_{C^{1/6-\kappa}_{t,1/6}L^{5/3}}+\|v_q^\sharp\|_{C_{t,3/10}B_{5/3,\infty}^{3/5-\kappa}}+\|v_q^\sharp\|_{C^{1/20}_{t,3/{10}} B_{5/3,\infty}^{11/20-2\kappa}}
\\&\lesssim L^2+LN
 +L^2 2^{-R/20}\Big(\|v_q^1\|_{C_{t,1/6}B^{1/3-2\kappa}_{5/3,\infty}}+\|v_q^2\|_{C_tB^{1/5}_{5/3,\infty}}\\
 &\qquad+\|v_q^1\|_{C^{1/6-\kappa}_{t,1/6} L^{5/3}}+\|v_q^2\|_{C^{1/10}_tL^{5/3}}+\|v_q^\sharp\|_{C_{t,3/10}B_{5/3,\infty}^{3/5-\kappa}}\Big)+L^{2}2^{-R/6}\|v_q^2\|_{C_{t,1/6}L^{5/3}}.
\end{align*}
Then we choose $R$ such that
\begin{align}\label{c:R}2^{R/20}= 4CL^2,\end{align} with $C$ being the implicit constant and use \eqref{inductionv2p} to obtain
\begin{align}\label{est:vq1}
&\|v_q^1\|_{C_{t,1/6}B_{5/3,\infty}^{1/3-2\kappa}}+\|v_q^1\|_{C^{1/6-\kappa}_{t,1/6}L^{5/3}}+\|v_q^\sharp(s)\|_{C_{t,3/10}B_{5/3,\infty}^{3/5-\kappa}}+\|v_q^\sharp\|_{C^{1/20}_{t,3/{10}} B_{5/3,\infty}^{11/20-2\kappa}}
\\&\lesssim L^2+LN+L^2a^{-\alpha/2}M_L^{1/2},\nonumber
\end{align}
 which implies the first part of \eqref{estimatev12} and \eqref{vsharp1}.

\subsection{Estimate of $v^{1}_{q}$ in $C_{t}L^{2}$}
\label{s:6.3}

Here, most of the terms are similar as in \eqref{vq1} but we need to be careful about the compatibility condition \eqref{eq:c1}. As mentioned above, terms except for $v_q^\sharp\varocircle \Delta_{>R}z$ and $(v_q^1+v_q^2)\oprec \Delta_{\leq f(q)}\Delta_{>R}z$ have spatial regularity  $$B^{-1/3-3\kappa}_{5/3,\infty}\subset  B^{-1/3-3/10-3\kappa}_{2,\infty}$$ by Lemma \ref{lem:emb}. Thus, the corresponding $C_{t}L^2$ norm can be bounded by Lemma~\ref{l:sch2} with $\gamma=0$, $\delta=1/6$, $\alpha=\kappa$, $\beta=-1/3-3/10-3\kappa-1$ as follows
\begin{align*}
&\| \mathcal{I} \tmop{div} (\cdots) \|_{C_t L^2} \lesssim \| \mathcal{I}
   \tmop{div} (\cdots) \|_{C_t B^{\kappa}_{2, \infty}}\\
   & \lesssim L \| \tmop{div} (\cdots) \|_{C_{t,1/6}B_{2,
   \infty}^{- 1 / 3 - 3 / 10 - 3 \kappa - 1}} \\
&\lesssim L^{2}+L^{2}\|v_q^1\|_{C_{t,1/6}B_{5/3,\infty}^{1/3-2\kappa}}+L^{2}\|v_q^2\|_{{C_{t}L^{5/3}}}.
\end{align*}
Here, in $\cdots$ we collected all the terms from $V^{1}_{q}+V^{1,*}_{q}$ except for the above mentioned two and $z^{\<21>}$ and their symmetric counterparts.

Next, using paraproduct estimates and  Lemma \ref{lem:emb}, the term corresponding to $v_q^\sharp\varocircle \Delta_{>R}z$ can be bounded in view of the embedding $B^{3 / 5 - \kappa - 1 / 2 - \kappa}_{5 / 3, \infty} \subset B_{2,
   \infty}^{3 / 5 - 1 / 2 - 2 \kappa - 3 / 10}=B^{-1/5-2\kappa}_{2,\infty}$ by Lemma~\ref{l:sch2} with $\gamma=0$, $\delta=3/10$, $\alpha=\kappa$, $\beta=-1/5-2\kappa-1$ as
   \[
\begin{aligned}
&\| \mathcal{I} \tmop{div} (v_q^\sharp\varocircle \Delta_{>R}z) \|_{C_t L^2} \lesssim \| \mathcal{I}
   \tmop{div} (v_q^\sharp\varocircle \Delta_{>R}z) \|_{C_t B^{\kappa}_{2, \infty}}
   \\& \lesssim L \| \tmop{div} (v_q^\sharp\varocircle \Delta_{>R}z) \|_{C_{t, 3 / 10} B_{2,
   \infty}^{-1/5-2\kappa-1}}
    \\& \lesssim L\| v_q^\sharp\varocircle \Delta_{>R}z \|_{C_{t, 3 / 10} B_{5/3,
   \infty}^{3 / 5 - 1 / 2 - 2 \kappa }}
   \\&\lesssim L\|v_q^\sharp\|_{C_{t,3/10}B_{5/3,\infty}^{3/5-\kappa}}\|z\|_{C_tC^{-1/2-\kappa}}
\\&\lesssim L^2\|v_q^\sharp\|_{C_{t,3/10}B_{5/3,\infty}^{3/5-\kappa}}.
   \end{aligned}
   \]

We also use Lemma \ref{lem:para} and embedding $B^{-1+3/{10}+\kappa}_{5/3,\infty}\subset H^{-1}$ to have
\begin{align*}
	\|(v_q^1+v_q^2)\oprec \Delta_{\leq f(q)}\Delta_{>R}z\|_{C_tH^{-1}}
&	\lesssim \|v_q^1\|_{C_tL^2}\|\Delta_{> R}z\|_{C_tC^{-2/3-\kappa}}+\|v_q^2\|_{C_tL^{5/3}}\|z\|_{C_tC^{-1/2-\kappa}}
	\\ &\lesssim L^{1/2}(\|v_q^1\|_{C_tL^2}2^{-R/6}+\|v_q^2\|_{C_tL^{5/3}}).
\end{align*}
Thus combining the above estimates and \eqref{inductionv2p}, \eqref{est:vq1} we obtain
\begin{align*}
&\|v_q^1\|_{C_tL^2}
\\&\lesssim L^2+LN+L^2(\|v_q^1\|_{C_{t,1/6}B_{5/3,\infty}^{1/3-2\kappa}}+\|v_q^2\|_{C_{t}L^{5/3}}+\|v_q^\sharp\|_{C_{t,3/10}B_{5/3,\infty}^{3/5-\kappa}})+L^2\|v_q^1\|_{C_tL^2}2^{-R/6}
\\&\lesssim L^{4}+L^{3}N +a^{-\alpha/2}L^{4}M_{L}^{1/2}+L^{2}a^{-\alpha/2}M_{L}^{1/2}+L^2\|v_q^1\|_{C_tL^2}2^{-R/6}
\\&\lesssim L^{4}+L^{3}N +a^{-\alpha/4}M_{L}^{1/2}+L^2\|v_q^1\|_{C_tL^2}2^{-R/6},
\end{align*}
using the fact that $L^{4}\leq a^{\alpha/4}$.  Hence the last part of \eqref{estimatev12} follows.

\subsection{Estimate of $v^{\sharp}_{q}$ in $C_{t}L^{2}$}
\label{s:6.3a}

We apply the paracontrolled ansatz \eqref{vsharp}, the Besov embedding Lemma~\ref{lem:emb}, and paraproduct estimates Lemma \ref{lem:para} as well as \eqref{inductionv2p} and \eqref{stopping time3} together with the Schauder estimate to control $z^{\<21l1>}$ and $z^{\<211>}$. We deduce
\begin{align*}
\|v_q^\sharp\|_{C_tL^2}&\lesssim L\|v_q^1\|_{C_tL^{2}}+\|v_q^2\prec\cI(\nabla \Delta_{>R}\Delta_{\leq f(q)}z)\|_{C_tB^{{3}/{10}+\kappa}_{5/3,\infty}}+L^{2}
\\
&\lesssim L(\|v_q^2\|_{C_tL^{5/3}}+\|v_q^1\|_{C_tL^{2}})+L^{2}
\\&\lesssim La^{-\alpha/2}M_{L}^{1/2}+L(L^{4}+L^{3}N +a^{-\alpha/4}M_{L}^{1/2}) +L^{2}
\\&\lesssim  M_L^{1/2}a^{-\alpha/8}+L^5N+L^6.
\end{align*}
The last part of \eqref{vsharp1} follows.

\subsection{Estimate of the difference $v_{q+1}^1-v_q^1$ in $C_{t}B^{1/3-2\kappa}_{5/3,\infty}$ and $C_{t}^{1/6-\kappa}L^{5/3}$}
\label{s:6.4}

Most of the  terms can be estimated similarly as in \eqref{vq1}. We do not have to consider the initial data as it vanishes and therefore we can even control directly the $C_t$-norms without any weight. The main change comes from the additional difference  $\Delta_{\leq f(q+1)}-\Delta_{\leq f(q)}$. First, we use Lemma \ref{lem:para} to bound the terms with paraproducts $\oprec, \prec$ containing $\Delta_{\leq f(q+1)}-\Delta_{\leq f(q)}$ by Schauder estimates as
\begin{align*}
&L\|v_q^1+v_q^2\|_{C_tL^{5/3}}\|(\Delta_{\leq f(q+1)}-\Delta_{\leq f(q)})\Delta_{>R}(z+z^{\<20>})\|_{C_tC^{-2/3-\kappa}}\lesssim M_L^{1/2}\lambda_{q}^{-\theta/6},
\end{align*}
\begin{align*}
&L\|v_q^1+v_q^2\|_{C_tL^{5/3}}\|(\Delta_{\leq f(q+1)}-\Delta_{\leq f(q)})\Delta_{>R}z^{\<101>})\|_{C_tC^{-2/3-\kappa}}\lesssim M_L^{1/2}\lambda_{q}^{-\theta/6}.
\end{align*}
The part of $\osucc$  containing $\Delta_{\leq f(q+1)}-\Delta_{\leq f(q)}$ can be bounded  by Schauder estimates Lemma~\ref{l:sch2} with $\gamma=0$, $\delta=1/6$, $\alpha= 1/3-2\kappa$, $\beta=1/3-2\kappa-11/20-\kappa-1$ as
\begin{align*}
&L\|v_q^1\|_{C_{t,1/6}B_{5/3,\infty}^{1/3-2\kappa}}\|(\Delta_{\leq f(q+1)}-\Delta_{\leq f(q)})z\|_{C_tC^{-11/20-\kappa}}\lesssim M_L^{1/2}\lambda_{q}^{-\theta/20}.
\end{align*}
Thus, in view of \eqref{vq1} we obtain
\begin{align}\label{est:vq2}
&\|v_{q+1}^1-v_{q}^1\|_{C_tB_{5/3,\infty}^{1/3-2\kappa}}+\|v_{q+1}^1-v_{q}^1\|_{C^{1/6-\kappa}_tL^{5/3}}\nonumber
\\&\lesssim L^{2}2^{-R/20}\Big(\|v_{q+1}^1-v_q^1\|_{C_tB_{5/3,\infty}^{1/3-2\kappa}}+\|v_{q+1}^2-v_q^2\|_{C_tL^{5/3}}+\|v_{q+1}^\sharp-v_q^\sharp\|_{C_{t,3/10}B_{5/3,\infty}^{3/5-\kappa}}\Big)\nonumber\\
&\quad+M_L^{1/2}\lambda_{q}^{-\theta/20},
\end{align}
where for $v^\sharp_q$ part we used $\gamma=0$, $\delta=3/10$, $\alpha=1/3-2\kappa$, $\beta=3/5-\kappa-11/20-1$. Therefore in order to deduce the first part of \eqref{est:vq}, we shall estimate the difference $v_{q+1}^\sharp-v_q^\sharp$.

\subsection{Estimate of the difference $v_{q+1}^\sharp-v_q^\sharp$ in $C_{t,3/10}B_{5/3,\infty}^{3/5-\kappa}$}
\label{s:6.5}

Terms in $V_q^{\sharp}$ can be bounded similarly as above. We only concentrate on  $$\mathbb{P} [\mathcal{I}, (v^1_q + v^2_q) \prec] (\nabla \Delta_{> R}
   \Delta_{\leqslant f (q)} z),\qquad\mathcal{I}\mathbb{P} [\nabla(v_q^1 + v_q^2) \prec
   \Delta_{\leqslant f (q)} \Delta_{> R} z].$$
   Similarly as before, most terms could be estimated as the estimates for $v_q^\sharp$ with $v_q$ replaced by $v_{q+1}-v_q$. We consider the terms containing $\Delta_{\leq f(q+1)}-\Delta_{\leq f(q)}$ and use
   $$\|(\Delta_{\leq f(q+1)}-\Delta_{\leq f(q)})z\|_{C_tC^{-11/20-\kappa}}\lesssim \lambda_{q}^{-\theta/20} L^{1/2}.$$
Thus, using \eqref{estimatev12} the $C_{t,3/10}B_{5/3,\infty}^{3/5-\kappa}$-norm of these terms can be bounded by
\begin{align*}
&\lambda_{q}^{-\theta/20}L^{2}(\|v_q^1\|_{C_{t,1/6}B^{1/3-2\kappa}_{5/3,\infty}}+\|v_q^2\|_{C_tB^{1/5}_{5/3,\infty}}+\|v_q^1\|_{C^{1/6-\kappa}_{t,1/6}L^{5/3}}+\|v_q^2\|_{C^{1/10}_tL^{5/3}})\\&\lesssim\lambda_{q}^{-\theta/20}M_L^{1/2}.
\end{align*}
Here we used Lemma \ref{commutator}.
Hence,
we obtain
\begin{align}\label{est:vsharp}
&\|v_{q+1}^\sharp-v_q^\sharp\|_{C_{t,3/10}B_{5/3,\infty}^{3/5-\kappa}}\nonumber
\\&\lesssim \lambda_{q}^{-\theta/20}M_L^{1/2}+L^2\Big(\|v_{q+1}^1-v_q^1\|_{C_tB^{1/3-2\kappa}_{5/3,\infty}}+\|v_{q+1}^2-v_q^2\|_{C_tB^{1/5}_{5/3,\infty}}
\\&\quad+\|v_{q+1}^1-v_q^1\|_{C^{1/6-\kappa}_tL^{5/3}}+\|v_{q+1}^2-v_q^2\|_{C^{1/10}_tL^{5/3}}+\|v_{q+1}^\sharp-v_q^\sharp\|_{C_{t,3/10}B_{5/3,\infty}^{3/5-\kappa}}\Big)2^{-R/20}.\nonumber
\end{align}
Combining this bound with \eqref{est:vq2} we deduce a first part of \eqref{est:vq} and \eqref{vsharp2} and it remains to estimate the differences $v^{1}_{q+1}-v^{1}_{q}$ and $v^{\sharp}_{q+1}-v^{\sharp}_{q}$ in  $C_{t}L^{2}$.

\subsection{Estimate of the difference $v_{q+1}^1-v_q^1$ and $v^{\sharp}_{q+1}-v^{\sharp}_{q}$ in  $C_{t}L^{2}$}
\label{s:6.6}

By the Besov embedding Lemma~\ref{lem:emb} it holds
\begin{align*}
\|v_{q+1}^1-v_q^1\|_{C_tL^2}&\lesssim \|v_{q+1}^1-v_q^1\|_{C_tB_{5/3,\infty}^{1/3-2\kappa}},
\end{align*}
which in view of the first part of \eqref{est:vq} implies the second part of \eqref{est:vq}. Moreover, by \eqref{vsharp}, the Besov embedding Lemma \ref{lem:emb}, and paraproduct estimates Lemma \ref{lem:para} we
 have
\begin{align*}
&\|(v_q^1+v_q^2)\prec\cI (\nabla \Delta_{>R}(\Delta_{\leq f(q)}-\Delta_{\leq f(q+1)})z)\|_{C_{t}B^{1/3-2\kappa}_{5/3,\infty}}
\\&\lesssim L(\|v^1_{q}\|_{C_tL^{5/3}}+\|v^2_{q}\|_{C_tL^{5/3}})\|(\Delta_{\leq f(q)}-\Delta_{\leq f(q+1)})z\|_{C_tC^{-2/3-\kappa}}
\\&\lesssim L^{{3/2}}(\|v^1_{q}\|_{C_tL^{2}}+\|v^2_{q}\|_{C_tL^{5/3}})\lambda_{q}^{-\theta/6},
\end{align*}
which due to  $B^{1/3-2\kappa}_{5/3,\infty}\subset L^2$ and \eqref{est:vq} implies
\begin{align*}
&\|v_{q+1}^\sharp-v_q^\sharp\|_{C_tL^{2}}
\\&\lesssim \|v_{q+1}^1-v_q^1\|_{C_tL^2}+\|v^2_{q+1}-v^2_q\|_{C_tL^{5/3}}+L^{3/2}(\|v^1_{q}\|_{C_tL^{2}}+\|v^2_{q}\|_{C_tL^{5/3}})\lambda_{q}^{-\theta/6}\nonumber
\\&\lesssim \|v_{q+1}^2-v_q^2\|_{C_tB^{1/5}_{5/3,\infty}}+\|v_{q+1}^2-v_q^2\|_{C^{1/10}_tL^{5/3}}+M_L^{1/2}\lambda_{q}^{-\theta/20}.
\end{align*}
This gives the remaining  estimate of  \eqref{vsharp2}.

\subsection{Estimate of $v^{1}_{q}$ in $C_{t,3/8}L^4$}
\label{s:6.7}

This norm  may a priori blow up during the  iteration. We also estimate each term separately and apply the Schauder estimate Lemma~\ref{l:sch2} which then gives an additional factor $L$. We have
$$\|\Delta_{>R}\Delta_{\leq f(q)} (z+z^{\<20>})\|_{C_tC^{1/5+\kappa}}+\|\Delta_{>R}\Delta_{\leq f(q)} z^{\<101>}\|_{C_tC^{1/5+\kappa}}\lesssim L\lambda_{q}^{(7/10+2\kappa)\theta}.$$
Hence, by Lemma~\ref{lem:para} and \eqref{estimatev12} we have
\begin{align*}
&\|(v_q^1+v_q^2)\oprec\Delta_{>R}\Delta_{\leq f(q)} (z+z^{\<20>})\|_{C_tB^{-1+\kappa}_{4,\infty}}+\|(v_q^1+v_q^2)\prec\Delta_{>R}\Delta_{\leq f(q)} z^{\<101>}\|_{C_tB^{-1+\kappa}_{4,\infty}}
\\&\lesssim
\|v_q^1+v_q^2\|_{C_tB^{-6/5}_{4,\infty}}\Big(\|\Delta_{>R}\Delta_{\leq f(q)} (z+z^{\<20>})\|_{C_tC^{1/5+\kappa}}+\|\Delta_{>R}\Delta_{\leq f(q)} z^{\<101>}\|_{C_tC^{1/5+\kappa}}\Big)
\\&\lesssim L\lambda_{q}^{(7/10+2\kappa)\theta}\|v_q^1+v_q^2\|_{C_{t}L^{5/3}},
\end{align*}
\begin{align*}
&\|v_q^1\osucc \Delta_{>R}\Delta_{\leq f(q)} z\|_{C_{t,1/6}B^{-1+\kappa}_{4,\infty}}\\
&\lesssim\|v_q^1\|_{C_{t,1/6}B^{-1+\kappa}_{4,\infty}}\|\Delta_{>R}\Delta_{\leq f(q)} z\|_{C_tL^\infty}
\\&\lesssim L^{{1/2}}\lambda_{q}^{(1/2+2\kappa)\theta}\|v_q^1\|_{C_{t,1/6}B^{1/3-2\kappa}_{5/3,\infty}},
\end{align*}
and
\begin{align*}
&\|v_q^1\osucccurlyeq \Delta_{>R}z^{\<20>}\|_{C_{t,1/6}B_{4,\infty}^{-1+\kappa}}+\|v_q^1\succcurlyeq \Delta_{>R}z^{\<101>}\|_{C_{t,1/6}B_{4,\infty}^{-1+\kappa}}
\\&\lesssim\|v_q^1\osucccurlyeq \Delta_{>R}z^{\<20>}\|_{C_{t,1/6}B_{5/3,\infty}^{1/3-3\kappa}}+\|v_q^1\succcurlyeq \Delta_{>R}z^{\<101>}\|_{C_{t,1/6}B_{5/3,\infty}^{1/3-3\kappa}}
\\&\lesssim\|v_q^1\|_{C_{t,1/6}B_{5/3,\infty}^{1/3-2\kappa}}(\|z^{\<20>}\|_{C_tC^{-\kappa}}+\|z^{\<101>}\|_{C_tC^{-\kappa}})
\\&\lesssim\|v_q^1\|_{C_{t,1/6}B_{5/3,\infty}^{1/3-2\kappa}}L.
\end{align*}
Using Lemma \ref{l:sch2}  and \eqref{vsharp1} we have
\begin{align*}
&\|\cI\div(v_q^\sharp\varocircle\Delta_{>R}z)\|_{C_{t,3/10}L^4}
\lesssim\|\cI\div(v_q^\sharp\varocircle\Delta_{>R}z)\|_{C_{t,3/10}B^\kappa_{4,\infty}}
\\&\lesssim L\|v_q^\sharp\varocircle\Delta_{>R}z\|_{C_{t,3/10}B^{-1+2\kappa}_{4,\infty}}
\\&\lesssim L\|v_q^\sharp\|_{C_{t,3/10}B_{5/3,\infty}^{3/5{-\kappa}}}\|\Delta_{>R}z\|_{C_tC^{-1/2-\kappa}}.
\end{align*}
By commutator estimates Lemma \ref{lem:com1} we have
\begin{align*}
&\|([\mathbb{P},v_q^1\prec]\cI(\nabla z))\varocircle \Delta_{>R}z\|_{C_{t,1/6}B^{1/3-4\kappa}_{5/3,\infty}}+\|\mathrm{com}(v_q^1,\mathbb{P}\cI(\nabla z),\Delta_{>R}z)\|_{C_{t,1/6}B^{1/3-4\kappa}_{5/3,\infty}}
\\&\lesssim\|v_q^1\|_{C_{t,1/6}B_{5/3,\infty}^{1/3-2\kappa}}\|z\|_{C_tC^{-1/2-\kappa}}^2\lesssim\|v_q^1\|_{C_{t,1/6}B_{5/3,\infty}^{1/3-2\kappa}}L.%
\end{align*}
Finally, for the initial value part we apply Lemma 9 in \cite{DV15}
to obtain
$$\|e^{t \Delta}v_{0}\|_{L^4}\lesssim t^{-3/8}\|v_{0}\|_{L^2}.$$
Combining the above estimates and applying the Schauder estimate Lemma \ref{l:sch2}, the Besov embedding Lemma \ref{lem:emb} as well as \eqref{estimatev12} and \eqref{vsharp1} and the definition of $M_{L}$ we obtain \eqref{estimatev42}.

Note that we  only control the  $L^4$-norm  instead of e.g. $L^\infty$ because the paraproduct $$v_q^1\osucc \Delta_{\leq f(q)}\Delta_{>R}z$$ only belongs to $B^{1/3-2\kappa}_{5/3,\infty}$.

\section{The main iteration -- proof of Proposition \ref{p:iteration p}}
\label{ss:it2}

\subsection{Choice of parameters}\label{s:c2}

In the sequel, additional parameters will be indispensable and their value has to be carefully chosen in order to respect all the compatibility conditions appearing in the estimations below. First, for a sufficiently small  $\alpha\in (0,1)$ to be chosen below,  we let $\ell\in (0,1)$ be  a small parameter  satisfying
\begin{equation}\label{ell}
 {\ell^{4/5} }\lambda_q^4\leq \lambda_{q+1}^{-\alpha},\quad \ell^{-1}\leq \lambda_{q+1}^{2\alpha},
\end{equation}
In particular, we define
\begin{equation}\label{ell1}\ell:=\lambda_{q+1}^{-\frac{3\alpha}{2}}\lambda_q^{-2}.\end{equation}

In the sequel, we  use the following bounds
\begin{align}\label{parameter2}
	\alpha>244\beta b,\quad 1> {168}\beta b^2,\quad \frac1{35}-33\alpha>2\beta b,\quad \alpha b>128
\end{align}
	which can be obtained by choosing $\alpha$ small  such that
$\frac1{35}-33\alpha>\alpha,$
	and choosing $b\in  \mathbb{N}$ large enough such that
	$\alpha b>128$ and finally choosing $\beta$ small such that
	$\alpha>244\beta b$, $1> {168}\beta b^2$.  Various estimates of this form are needed for the final  control the new stress $\mathring{R}_{q+1}$. Hence, we shall choose $\alpha$ small first and $b$ large, then $\beta$  small enough. The last free parameter is $a$  which is power of $2^{21}$ and satisfies the lower bounds given through
$$a>4M_L+K,\quad  L\leq a^{\alpha/16}.$$
Then by our condition we have
\begin{align}\label{parameter1}
M_L(1+3q)+K\leq \lambda_q^{ {1/42}}< \ell^{-2/183}<\lambda_{q+1}^{\alpha-2\beta},\quad \sigma_q^{-1}<\ell^{-1/61}.\end{align}
 In the sequel, we increase $a$ in order to absorb various implicit and universal constants.

We may freely increase the value of $a$  provided we make $\beta$ smaller at the same time. %

\subsection{Mollification}\label{s:pp}

We intend to replace $v^2_q$ by a mollified velocity field $v_\ell$. To this end, we extend
\begin{align*}z(t)=z(0),\quad z^{\<20>}(t)=z^{\<20>}(0)=0,\quad z^{\<101>}(t)=0,
\\\mathcal{I}(\nabla z)(t)=0, \quad v_q^i(t)=v^i_q(0),\quad\mathring{R}_q(t)= \mathring{R}_q(0)\qquad \mbox{for } t<0.\end{align*}
 As $v_q^2$ equals to zero near zero, $\partial_tv_q^2(0)=0$, which implies by our extension that the equation holds also for $t<0$. Let $\{\phi_\varepsilon\}_{\varepsilon>0}$ be a family of standard mollifiers on $\mathbb{R}^3$, and let $\{\varphi_\varepsilon\}_{\varepsilon>0}$ be a family of  standard mollifiers with support on $\mathbb{R}^+$. We define a mollification of $v_q$, $\mathring{R}_q$  in space and time by convolution as follows
$$v_\ell=(v^2_q*_x\phi_\ell)*_t\varphi_\ell,\qquad
\mathring{R}_\ell=(\mathring{R}_q*_x\phi_\ell)*_t\varphi_\ell,$$
where $\phi_\ell=\frac{1}{\ell^3}\phi(\frac{\cdot}{\ell})$ and $\varphi_\ell=\frac{1}{\ell}\varphi(\frac{\cdot}{\ell})$.
Since the mollifier $\varphi_\ell$ is supported on $\mathbb{R}^+$, it is easy to see that $z_\ell$ is $(\mathcal{F}_t)_{t\geq0}$-adapted and so are $v_\ell$ and $\mathring{R}_\ell$.
Then using the equation for $v_q^2$ we obtain that $(v_\ell,\mathring{R}_\ell)$ satisfies
\begin{equation}\label{mollification3p}
\aligned
\partial_tv_\ell -\Delta v_\ell+\div N+\nabla p_\ell&=\div (\mathring{R}_\ell+R_{\textrm{com}})
\\\div v_\ell&=0,
\endaligned
\end{equation}
where $N$ is the trace-free part of the following matrix
$$
N=(v_q^1+v_q^2)\otimes (v_q^1+v_q^2)+V_{q}^{2}+V^{2,*}_{q}
$$
and $$R_{\textrm{com}}=N-N*_x\phi_\ell*_t\varphi_\ell.$$

By using \eqref{inductionv2p} \eqref{inductionv C1p} and \eqref{ell1} we know for $t\in[0, T_L]$
\begin{align}\label{error}
\|v^2_q-v_\ell\|_{C_{t}^{1/10}L^{5/3}}+\|v^2_q-v_\ell\|_{C_{t}W^{1/5,5/3}}&\lesssim\ell^{4/5}\|v^2_q\|_{C^1_{t,x}}
\leq \ell^{4/5}\lambda_q^4M_L^{1/2}
\\&\leq M_L^{1/2}\lambda_{q+1}^{-\alpha}
\leq\frac{1}{4} M_L^{1/2}\delta_{q+1}^{1/2}a^{-\alpha/2},\nonumber
\end{align}
\begin{equation}\label{error1p}
\|v^2_q-v_\ell\|_{C_tL^2}\lesssim \ell\|v^2_q\|_{C^1_{t,x}}\leq \ell\lambda_q^4 M_L^{1/2}\leq M_L^{1/2} \lambda_{q+1}^{-\alpha}\leq \frac{1}{4} M_L^{1/2}\delta_{q+1}^{1/2},
\end{equation}
and for $p\in [1,\infty]$
\begin{equation}\label{error11p}
\|v^2_q-v_\ell\|_{C_tW^{2/3,p}}\lesssim \ell^{1/3}\|v^2_q\|_{C^1_{t,x}}\leq \ell^{1/3}\lambda_q^4 M_L^{1/2}\leq M_L^{1/2} \lambda_{q+1}^{-{15\alpha}/{32}}\leq \frac{1}{4} M_L^{1/2}\delta_{q+1}^{1/2}a^{-\alpha/2},
\end{equation}
where we  used the fact that $\alpha b>128$ and $\alpha>3\beta$ and we  chose $a$ large enough in order to absorb the implicit constant.
In addition,
\begin{equation}\label{error13p}
\|v_\ell\|_{C^{N}_{t,x}}\lesssim\ell^{-N+1}\|v_q\|_{C^1_{t,x}}\leq \ell^{-N+1}\lambda_{q}^{4}M_{L}^{1/2}\lesssim M_L^{1/2}\ell^{-N}\lambda_{q+1}^{-\alpha}
\end{equation}
holds for $t\in[0, T_L]$ and
it holds for $t\in(\frac{\sigma_q}2\wedge T_L, T_L]$
\begin{equation}\label{eq:vl psp}
\|v_\ell(t)\|_{L^2}\leq \|v_q^2\|_{C_{t}L^2}\lesssim  M_0(M_L^{1/2}+K^{1/2})+3M_0M_L^{1/2}(1+3q)^{1/2}
\end{equation}
with some universal implicit constant.

\subsection{Construction of $v^{2}_{q+1}$.}
Let us proceed with the construction of the perturbation $w_{q+1}$, which then defines the next iteration by $v_{q+1}^2:=v_\ell+w_{q+1}$.
To this end, we make use of the  intermittent jets \cite[Section 7.4]{BV19}, which we recall in Appendix~\ref{s:B}. In particular, the building blocks $W_{(\xi)}=W_{\xi,r_\perp,r_\|,\lambda,\mu}$ for $\xi\in\Lambda$ are defined in (\ref{intermittent}) and the set $\Lambda$ is introduced in Lemma \ref{geometric}.
The necessary estimates are collected  in \eqref{bounds}.  For the intermittent jets
we choose the following parameters
\begin{equation}\label{parameter}
\aligned
\lambda&=\lambda_{q+1},
\qquad
r_\|=\lambda_{q+1}^{-4/7},
\qquad r_\perp=r_\|^{-1/4}\lambda_{q+1}^{-1}=\lambda_{q+1}^{-6/7},
\qquad
\mu=\lambda_{q+1}r_\|r_\perp^{-1}=\lambda_{q+1}^{9/7}.
\endaligned
\end{equation}
Since $a$ is power of $2^{21}$, $\lambda_{q+1}r_\perp= a^{(b^{q+1})/7}\in\mathbb{N}$.

 Now we follow \cite[Section 5.2]{HZZ21} and introduce  $\rho$ as follows
$$
\rho:=2\sqrt{\ell^2+|\mathring{R}_\ell|^2}+\frac{\gamma_{q+1}}{(2\pi)^3},
$$
which implies for $p\geq1$
\begin{equation}\label{rho psp}
\|\rho(t)\|_{L^p}\leq 2\ell (2\pi)^{3/p}+2\|\mathring{R}_\ell(t)\|_{L^p}+\gamma_{q+1}.
\end{equation}
In view of \eqref{eq:Rp} which holds on $(2\sigma_q\wedge T_L,T_L]$ and since $\mathrm{supp}\varphi_\ell\subset [0,\ell]$, we obtain for $t\in (4\sigma_q\wedge T_L,T_L]$
\begin{equation}\label{rho0 spp}
\|\rho\|_{C^{0}_{[4\sigma_q\wedge T_L,t],x}}\lesssim \ell^{-4}\delta_{q+1}M_L+\gamma_{q+1},
\end{equation}
where we also used the embedding $W^{4,1}\subset L^\infty$.
Then, we deduce similarly as \cite[(3.25)]{HZZ21} for $N\geq1$ and $t\in (4\sigma_q\wedge T_L,T_L]$
\begin{equation}\label{rhoN spp}
\aligned
\|\rho\|_{C^N_{[4\sigma_q\wedge T_L,t],x}}& \lesssim \ell^{-4-N}M_L\delta_{q+1}+\ell^{-N+1}(\ell^{-5}M_L\delta_{q+1})^N+\gamma_{q+1}\lesssim
\ell^{2 - 7 N} \delta_{q + 1} M_L+\gamma_{q+1}.
\endaligned
\end{equation}
For a general $t\in [0,T_L]$, we have by \eqref{bd:Rp}
\begin{equation}\label{rho0 sp1p}
\|\rho\|_{C^{0}_{t,x}}\lesssim \ell^{-4}M_L(1+3q)+\gamma_{q+1},
\end{equation}
and for $N\geq 1$
\begin{equation}\label{rhoN sp1p}
\aligned
\|\rho\|_{C^N_{t,x}}& \lesssim
\ell^{2 - 7 N}  M_L(1+3q)+\gamma_{q+1},
\endaligned
\end{equation}
where we used $M_L(1+3q)\leq \ell^{-1}$.

Next, we define the amplitude functions
\begin{equation}\label{amplitudes}a_{(\xi)}(\omega,t,x):=a_{\xi,q+1}(\omega,t,x):=\rho(\omega,t,x)^{1/2}\gamma_\xi\left(\Id
-\frac{\mathring{R}_\ell(\omega,t,x)}{\rho(\omega,t,x)}\right)(2\pi)^{-{3}/{4}},\end{equation}
where $\gamma_\xi$ is introduced in  Lemma \ref{geometric}.
Since $\rho$ and $\mathring{R}_\ell$ are $(\mathcal{F}_t)_{t\geq0}$-adapted, we know that also $a_{(\xi)}$ is $(\mathcal{F}_t)_{t\geq0}$-adapted.
By  (\ref{geometric equality}) we have
\begin{equation}\label{cancellation}(2\pi)^{-\frac{3}{2}}\sum_{\xi\in\Lambda}a_{(\xi)}^2\int_{\mathbb{T}^3}W_{(\xi)}\otimes W_{(\xi)}\dif x=\rho \Id-\mathring{R}_\ell,\end{equation}
By using (\ref{rho psp}) for $t\in(4\sigma_q\wedge T_L, T_L]$
\begin{equation}\label{estimate a psp}
\begin{aligned}
\|a_{(\xi)}(t)\|_{L^2}&\leq \|\rho(t)\|_{L^1}^{1/2}\|\gamma_\xi\|_{C^0(B_{1/2}(\Id))}\\
&\leq\frac{M}{8|\Lambda|(1+8\pi^{3})^{1/2}}\left(2(2\pi)^3\ell+2\delta_{q+1}M_L+\gamma_{q+1}\right)^{1/2}\\
&\leq\frac{M}{4|\Lambda|}(M_L^{1/2}\delta_{q+1}^{1/2}+\gamma_{q+1}^{1/2}),
\end{aligned}
\end{equation}
and for $t\in [0,T_L]$
\begin{equation*}
	\begin{aligned}
	\|a_{(\xi)}(t)\|_{C_tL^2}
	&\leq\frac{M}{4|\Lambda|}(M_L^{1/2}(1+3q)^{1/2}+\gamma_{q+1}^{1/2}),
	\end{aligned}
	\end{equation*}
where   $M$ denotes the universal constant from Lemma~\ref{geometric}.
From \eqref{rho0 spp}, \eqref{rhoN spp} and similarly to \cite[(3.30)]{HZZ21} we deduce for $t\in (4\sigma_q\wedge T_L,T_L]$
\begin{align*}
\|\rho^{1/2}\|_{C^0_{[4\sigma_q\wedge T_L,t],x}}\lesssim \ell^{-2}\delta_{q+1}^{1/2}M_L^{1/2}+\gamma_{q+1}^{1/2},
\end{align*}
and for $m=1,\dots, N$ using $K\leq \ell^{-1}$
\begin{align*}
\|\rho^{1/2}\|_{C^m_{[4\sigma_q\wedge T_L,t],x}}\lesssim \ell^{1-7m}\delta_{q+1}^{1/2}M_L^{1/2}+\ell^{1/2-m}(\gamma_{q+1}+\ell^{-5}\delta_{q+1}M_L)^{m}\leq \ell^{1-7m}(\delta_{q+1}^{1/2}M_L^{1/2}+\gamma_{q+1}^{1/2}).
\end{align*}
This implies  for $N\in \mN_{0}$ as in \cite[(3.34)]{HZZ21}
\begin{equation}\label{estimate aN psp}
\begin{aligned}
\|a_{(\xi)}\|_{C^N_{[4\sigma_q\wedge T_L,t],x}}
&\lesssim \ell^{-8-7N}(\delta_{q+1}^{1/2}M_L^{1/2}+\gamma_{q+1}^{1/2}).
\end{aligned}
\end{equation}
For a general $t\in [0,T_L]$ we have for $N\in \mN_{0}$
\begin{equation}\label{estimate aN ps1p}
\begin{aligned}
\|a_{(\xi)}\|_{C^N_{t,x}}
&\lesssim \ell^{-8-7N}(M_L^{1/2}(1+3q)^{1/2}+\gamma_{q+1}^{1/2}),
\end{aligned}
\end{equation}
where we used $M_L(1+3q)+K\leq \ell^{-1}$.

Let us introduce a smooth cut-off function
\begin{align*}
\chi(t)=\begin{cases}
0,& t\leq \frac{\sigma_q}2,\\
\in (0,1),& t\in (\frac{\sigma_q}2,{\sigma_q} ),\\
1,&t\geq {\sigma_q}.
\end{cases}
\end{align*}
Note that
$\|\chi'\|_{C^{0}_{t}}\leq 2^{q+1}$ which has to be taken into account in the estimates of the $C_t^{1/10}L^{5/3}$ and $C^{1}_{t,x}$-norms in \eqref{principle est1np}- \eqref{temporal est2 psp} below.

With these preparations in hand,  we define the principal part $w_{q+1}^{(p)}$ of the perturbation $w_{q+1}$ as
\begin{equation}\label{principle}
w_{q+1}^{(p)}:=\sum_{\xi\in\Lambda} a_{(\xi)}W_{(\xi)}.
\end{equation}
Since the coefficients $a_{(\xi)}$ are $(\mathcal{F}_t)_{t\geq0}$-adapted and $W_{(\xi)}$ is a deterministic function we deduce that
$w_{q+1}^{(p)}$ is also $(\mathcal{F}_t)_{t\geq0}$-adapted.
Moreover, according to (\ref{cancellation}) and (\ref{Wxi}) it follows that
\begin{equation}\label{can}w_{q+1}^{(p)}\otimes w_{q+1}^{(p)}+\mathring{R}_\ell=\sum_{\xi\in \Lambda}a_{(\xi)}^2 \mathbb{P}_{\neq0}(W_{(\xi)}\otimes W_{(\xi)})+\rho \Id,
\end{equation}
where we use the notation $\mathbb{P}_{\neq0}f:=f-\mathcal{F}f(0)=f-(2\pi)^{-3/2}\int_{\mathbb{T}^3}f\dif x$.

We also define the incompressibility corrector by
\begin{equation}\label{incompressiblity}
w_{q+1}^{(c)}:=\sum_{\xi\in \Lambda}\textrm{curl}(\nabla a_{(\xi)}\times V_{(\xi)})+\nabla a_{(\xi)}\times \textrm{curl}V_{(\xi)}+a_{(\xi)}W_{(\xi)}^{(c)},\end{equation}
with $W_{(\xi)}^{(c)}$ and $V_{(\xi)}$ being given in (\ref{corrector}).
Since $a_{(\xi)}$ is $(\mathcal{F}_t)_{t\geq0}$-adapted and $W_{(\xi)}, W_{(\xi)}^{(c)}$ and $V_{(\xi)}$ are  deterministic functions we know that
$w_{q+1}^{(c)}$ is also $(\mathcal{F}_t)_{t\geq0}$-adapted. 
By a direct computation we deduce that
\begin{equation*}
w_{q+1}^{(p)}+w_{q+1}^{(c)}=\sum_{\xi\in\Lambda}\textrm{curl}\,\textrm{curl}(a_{(\xi)}V_{(\xi)}),
\end{equation*}
hence
\begin{equation*}\div(w_{q+1}^{(p)}+w_{q+1}^{(c)})=0.\end{equation*}
We also introduce a temporal corrector
\begin{equation}\label{temporal}w_{q+1}^{(t)}:=-\frac{1}{\mu}\sum_{\xi\in \Lambda}\mathbb{P}\mathbb{P}_{\neq0}\left(a_{(\xi)}^2\phi_{(\xi)}^2\psi_{(\xi)}^2\xi\right),\end{equation}
where $\mathbb{P}$ is the Helmholtz projection. 
Similarly to above $w_{q+1}^{(t)}$ is $(\mathcal{F}_t)_{t\geq0}$-adapted and by similar computation as \cite[(7.38)]{BV19} we obtain
\begin{equation}\label{equation for temporal}
\aligned
&\partial_t w_{q+1}^{(t)}+\sum_{\xi\in\Lambda}\mathbb{P}_{\neq0}\left(a_{(\xi)}^2\div(W_{(\xi)}\otimes W_{(\xi)})\right)
\\
&\qquad= -\frac{1}{\mu}\sum_{\xi\in\Lambda}\mathbb{P}\mathbb{P}_{\neq0}\partial_t\left(a_{(\xi)}^2\phi_{(\xi)}^2\psi_{(\xi)}^2\xi\right)
+\frac{1}{\mu}\sum_{\xi\in\Lambda}\mathbb{P}_{\neq0}\left( a^2_{(\xi)}\partial_t(\phi^2_{(\xi)}\psi^2_{(\xi)}\xi)\right)
\\&\qquad= (\Id-\mathbb{P})\frac{1}{\mu}\sum_{\xi\in\Lambda}\mathbb{P}_{\neq0}\partial_t\left(a_{(\xi)}^2\phi_{(\xi)}^2\psi_{(\xi)}^2\xi\right)
-\frac{1}{\mu}\sum_{\xi\in\Lambda}\mathbb{P}_{\neq0}\left(\partial_t a^2_{(\xi)}(\phi^2_{(\xi)}\psi^2_{(\xi)}\xi)\right).
\endaligned
\end{equation}
Note that the first term on the right hand side can be viewed as a pressure term.

We define the truncated perturbations $\tilde w^{(p)}_{q+1},$ $ \tilde w_{q+1}^{(c)},$ $ \tilde w_{q+1}^{(t)}$ as follows
$$\tilde w^{(p)}_{q+1}:=w^{(p)}_{q+1}\chi,\quad \tilde w^{(c)}_{q+1}:=w^{(c)}_{q+1}\chi,\quad \tilde w^{(t)}_{q+1}:=w^{(t)}_{q+1}\chi^2.$$
Define ${w}_{q+1}:=\tilde{w}_{q+1}^{(p)}+\tilde{w}_{q+1}^{(p)}+\tilde{w}_{q+1}^{(t)}$ and
$$
v^{2}_{q+1}=v_{\ell}+w_{q+1}=v_{\ell}+\tilde{w}^{(p)}_{q+1}+\tilde{w}^{(c)}_{q+1}+\tilde{w}^{(t)}_{q+1}.
$$
We note that by construction $v^2_{q+1}$ is $(\mathcal{F}_t)_{t\geq0}$ adapted.

\subsection{Verification of the inductive estimates for $v^2_{q+1}$}

 By \eqref{estimate a psp} and \eqref{estimate aN psp} and {similar argument as \cite{HZZ21}}  we obtain
for $t\in(4\sigma_q\wedge T_L, T_L]$ and some universal constant $M_0\geq 1$
\begin{equation}\label{estimate wqp psp}
\|\tilde w_{q+1}^{(p)}(t)\|_{L^2}\lesssim \sum_{\xi\in\Lambda}\frac{1}{4|\Lambda|}M(M_L^{1/2}\delta_{q+1}^{1/2}+\gamma_{q+1}^{1/2})\|W_{(\xi)}\|_{C_tL^2}\leq \frac{M_0}{2}(M_L^{1/2}\delta_{q+1}^{1/2}+\gamma_{q+1}^{1/2}),
\end{equation}
where we %
used $150\alpha<\frac17$
and for $t\in (\frac{\sigma_q}2\wedge T_L,4\sigma_q\wedge T_L]$
\begin{equation}\label{estimate wqp ps1p}
\|\tilde w_{q+1}^{(p)}(t)\|_{L^2}\leq \frac{M_0}{2}((M_L(1+3q))^{1/2}+\gamma_{q+1}^{1/2}).
\end{equation}

Similarly as in \cite[(3.43)-(3.46)]{HZZ21},  we apply (\ref{bounds}) and (\ref{estimate aN psp}) for general $L^p$-norms to deduce for $t\in(4\sigma_q\wedge T_L, T_L]$, $p\in(1,\infty)$
\begin{equation}\label{principle est1 psp}
\aligned
\|\tilde w_{q+1}^{(p)}(t)\|_{L^p}&\lesssim \ell^{-8}(M_L^{1/2}\delta_{q+1}^{1/2}+\gamma_{q+1}^{1/2})r_\perp^{2/p-1}r_\|^{1/p-1/2},
\endaligned
\end{equation}
\begin{equation}\label{correction est psp}
\aligned
\|\tilde w_{q+1}^{(c)}(t)\|_{L^p}&\lesssim \ell^{-22}(M_L^{1/2}\delta_{q+1}^{1/2}+\gamma_{q+1}^{1/2})r_\perp^{2/p}r_\|^{1/p-3/2},
\endaligned
\end{equation}
and
\begin{equation}\label{temporal est1 psp}
\aligned
\|\tilde w_{q+1}^{(t)}(t)\|_{L^p}&\lesssim  \ell^{-16}(M_L\delta_{q+1}+\gamma_{q+1})r_\perp^{2/p-1}r_\|^{1/p-2}\lambda_{q+1}^{-1},
\endaligned
\end{equation}
\begin{equation}\label{corr temporal psp}
\aligned
&\|\tilde w_{q+1}^{(c)}(t)\|_{L^p}+\|\tilde w_{q+1}^{(t)}(t)\|_{L^p}\lesssim\ell^{-8} (M_L^{1/2}\delta_{q+1}^{1/2}+\gamma_{q+1}^{1/2})r_\perp^{2/p-1}r_\|^{1/p-1/2}.
\endaligned
\end{equation}
For a general $t\in (\frac{\sigma_q}2\wedge T_L,4\sigma_q\wedge T_L]$ we have
\begin{equation}\label{principle est1 ps1p}
\aligned
\|\tilde w_{q+1}^{(p)}(t)\|_{L^p}&\lesssim \ell^{-8}((M_L(1+3q))^{1/2}+\gamma_{q+1}^{1/2})r_\perp^{2/p-1}r_\|^{1/p-1/2},
\endaligned
\end{equation}
\begin{equation}\label{correction est ps1p}
\aligned
\|\tilde w_{q+1}^{(c)}(t)\|_{L^p}&\lesssim \ell^{-22}((M_L(1+3q))^{1/2}+\gamma_{q+1}^{1/2})r_\perp^{2/p}r_\|^{1/p-3/2},
\endaligned
\end{equation}
and
\begin{equation}\label{temporal est1 ps1p}
\aligned
\|\tilde w_{q+1}^{(t)}(t)\|_{L^p}&\lesssim  \ell^{-16}((M_L(1+3q))+\gamma_{q+1})r_\perp^{2/p-1}r_\|^{1/p-2}\lambda_{q+1}^{-1},
\endaligned
\end{equation}
\begin{equation}\label{corr temporal ps1p}
\aligned
&\|\tilde w_{q+1}^{(c)}(t)\|_{L^p}+\|\tilde w_{q+1}^{(t)}(t)\|_{L^p}\lesssim \ell^{-8}((M_L(1+3q))^{1/2}+\gamma_{q+1}^{1/2})r_\perp^{2/p-1}r_\|^{1/p-1/2}.
\endaligned
\end{equation}

Combining (\ref{estimate wqp psp}), (\ref{correction est psp}) and (\ref{temporal est1 psp}) we obtain for $t\in(4\sigma_q\wedge T_L, T_L]$
\begin{equation}\label{estimate wq psp}
\aligned
\|w_{q+1}(t)\|_{L^2}&\leq (M_L^{1/2}\delta_{q+1}^{1/2}+\gamma_{q+1}^{1/2})\left(\frac{M_0}{2}+C\lambda_{q+1}^{44\alpha-2/7}+C(M_L^{1/2}\delta_{q+1}^{1/2}+\gamma_{q+1}^{1/2})\lambda_{q+1}^{32\alpha-1/7}\right)
\\&\leq \frac34M_{0}(M_L^{1/2}\delta_{q+1}^{1/2}+\gamma_{q+1}^{1/2}),
\endaligned
\end{equation}
and
for $t\in(\frac{\sigma_q}2\wedge T_L, 4\sigma_q\wedge T_L]$
\begin{equation}\label{estimate wq ps1p}
\aligned
\|w_{q+1}(t)\|_{L^2}
&\leq \frac34M_{0}((M_L(1+3q))^{1/2}+\gamma_{q+1}^{1/2}),
\endaligned
\end{equation}
where we used $M_L(1+3q)+K<\ell^{-1}$ and the condition on $\alpha$.

With these bounds, we have all in hand to complete the proof of Proposition~\ref{p:iteration p}. We split the details  into several subsections.

\subsubsection{Proof of \eqref{iteration psp}}
First, \eqref{estimate wq psp} together with \eqref{error1p} yields for $t\in(4\sigma_q\wedge T_L, T_L]$
$$
\|v_{q+1}^2(t)-v_{q}^2(t)\|_{L^{2}}\leq \|w_{q+1}(t)\|_{L^{2}}+\|v_{\ell}(t)-v_{q}^2(t)\|_{L^{2}}\leq M_0(M_L^{1/2} \delta_{q+1}^{1/2}+\gamma_{q+1}^{1/2}).
$$
For $t\in (\frac12\sigma_q\wedge T_L, 4\sigma_q\wedge T_L]$
we use \eqref{estimate wq ps1p}, \eqref{error1p} to obtain
$$
\|v_{q+1}^2(t)-v_{q}^2(t)\|_{L^{2}}\leq \|w_{q+1}(t)\|_{L^{2}}+\|v_{\ell}(t)-v_{q}^2(t)\|_{L^{2}}\leq M_0((M_L(1+3q))^{1/2}+\gamma_{q+1}^{1/2}).
$$
For $t\in  [0,\frac12\sigma_q\wedge T_L]$ it holds $\chi(t)=0$ as well as $v_{q}^2(t)=0$ by \eqref{inductionv psp} implying
$$
\|v_{q+1}^2-v_{q}^2\|_{C_{t}L^{2}}=\|v_{\ell}-v_{q}^2\|_{C_{t}L^{2}}=0.%
$$
Hence \eqref{iteration psp} follows.

\subsubsection{Proof that \eqref{iteration psp} implies \eqref{inductionv psp} on the level $q+1$}
From \eqref{iteration psp} we find for $t\in [0,\frac12\sigma_q\wedge T_L]$ %
$$
\|v^{2}_{q+1}\|_{C_{t}L^{2}}\leq \sum_{r=0}^{q}\|v^{2}_{r+1}-v^{2}_{r}\|_{C_{t}L^{2}}=0,%
$$
proving the second bound in \eqref{inductionv psp} on the level $q+1$.
For the first bound in \eqref{inductionv psp} on the level $q+1$, we obtain in view of \eqref{iteration psp} for $t\in (\frac12\sigma_q\wedge T_L,T_{L}]$
$$
\begin{aligned}
\|v^{2}_{q+1}(t)\|_{L^{2}}&\leq \sum_{0\leq r\leq q}\|v^{2}_{r+1}(t)-v^{2}_{r}(t)\|_{L^{2}}\\
&\leq M_{0} \left(M_{L}^{1/2}\sum_{0\leq r\leq q}\delta_{r+1}^{1/2}+\sum_{0\leq r\leq q}(M_L(1+3r))^{1/2}\mathbf{1}_{t\in (\frac{\sigma_{r}}{2}\wedge T_{L},4\sigma_{r}\wedge T_{L}]}+\sum_{0\leq r\leq q}\gamma_{r+1}^{1/2}\right)\\&\leq M_{0} \left(M_{L}^{1/2}\sum_{0\leq r\leq q}\delta_{r+1}^{1/2}+3(M_L(1+3q))^{1/2}+\sum_{0\leq r\leq q}\gamma_{r+1}^{1/2}\right),
\end{aligned}
$$
where we used the fact that by the definition of $\sigma_{r}=2^{-r}$ each $t\in [0,T_L]$ only belongs to three intervals $(\frac{\sigma_{r}}{2}\wedge T_{L},4\sigma_{r}\wedge T_{L}]$.  Hence \eqref{inductionv psp} follows.

\subsubsection{Proof of \eqref{induction wtp}  and the second inequality in \eqref{inductionv2p} on the level $q+1$}
\label{s:743}

In this section, we see in particular how the definition of intermittent jets determines the integrability $5/3$ which we use throughout the paper.
It holds by \eqref{estimate aN ps1p}, \eqref{bounds} and the choice of parameters in \eqref{parameter2}
\begin{equation}\label{principle est1np}
\aligned
&\|\tilde w_{q+1}^{(p)}\|_{C^{1/{10}}_tL^{5/3}}+\|\tilde w_{q+1}^{(p)}\|_{C_tW^{1/5,5/3}}
\\&\lesssim \sum_{\xi\in \Lambda}\|a_{(\xi)}\|_{C^1_{t,x}}(\|W_{(\xi)}\|_{C^{1/{10}}_tL^{5/3}}+\|W_{(\xi)}\|_{C_tW^{1/5,5/3}})2^{q+1}
\\&\lesssim (M_L^{1/2}(1+3q)^{1/2}+\gamma_{q+1}^{1/2})\ell^{-15}r_\perp^{1/5}r_\|^{1/10}((r_\perp\lambda_{q+1}\mu/r_\|)^{1/10}+\lambda_{q+1}^{1/5})2^{q+1}
\\&\lesssim M_L^{1/2}\lambda_{q+1}^{32\alpha-1/35},
\endaligned
\end{equation}
\begin{equation}\label{correction estnp}
\aligned
&\|\tilde w_{q+1}^{(c)}\|_{C^{1/10}_tL^{5/3}}+\|\tilde w_{q+1}^{(c)}\|_{C_tW^{1/5,5/3}}\\&\lesssim\sum_{\xi\in \Lambda}2^{q+1}\Big(\|a_{(\xi)}\|_{C^1_{t,x}}(\|W_{(\xi)}^{(c)}\|_{C^{1/10}_tL^{5/3}}+\|W_{(\xi)}^{(c)}\|_{C_tW^{1/5,5/3}})
\\&\quad+\|a_{(\xi)}\|_{C^3_{t,x}}(\|V_{(\xi)}\|_{C^{1/10}_tW^{1,5/3}}+\|V_{(\xi)}\|_{C_tW^{6/5,5/3}})\Big)
\\&\lesssim (M_L^{1/2}(1+3q)^{1/2}+\gamma_{q+1}^{1/2})\ell^{-29}r_\perp^{1/5}r_\|^{1/10}\left(r_\perp r_\|^{-1}+\lambda_{q+1}^{-1}\right)\Big((r_\perp\lambda_{q+1}\mu/r_\|)^{1/10}+\lambda_{q+1}^{1/5}\Big)2^{q+1}
\\&\lesssim M_L^{1/2}\ell^{-30}r_\perp^{6/5}r_\|^{-9/10}\Big((r_\perp\lambda_{q+1}\mu/r_\|)^{1/10}+\lambda_{q+1}^{1/5}\Big)
\\&\lesssim M_L^{1/2}\lambda_{q+1}^{60\alpha-11/35},
\endaligned
\end{equation}
\begin{equation}\label{temporal est1np}
\aligned
&\|\tilde w_{q+1}^{(t)}\|_{C^{1/10}_tL^{5/3}}+\|\tilde w_{q+1}^{(t)}\|_{C_tW^{1/5,5/3}}
\\&\leq\frac{1}{\mu}2^{q+1}\sum_{\xi\in\Lambda}
\Big(\|a_{(\xi)}\|_{C^0_{t,x}}\|a_{(\xi)}\|_{C^1_{t,x}}\|\phi_{(\xi)}\|_{L^{10/3}}^2\|\psi_{(\xi)}\|_{C_tL^{10/3}}^2\\
&\quad+\|a_{(\xi)}\|_{C^0_{t,x}}^2\|\phi_{(\xi)}\|_{L^{10/3}}\| \phi_{(\xi)}\|_{W^{1/5,10/3}}\|\psi_{(\xi)}\|_{C_tL^{10/3}}^2\\
&\quad+\|a_{(\xi)}\|_{C^0_{t,x}}^2\|\phi_{(\xi)}\|_{L^{10/3}}^2\|\psi_{(\xi)}\|_{C_tW^{1/5,10/3}}\|\psi_{(\xi)}\|_{C_tL^{10/3}}
\\
&\quad+\|a_{(\xi)}\|_{C^0_{t,x}}^2\|\phi_{(\xi)}\|_{L^{10/3}}^2\|\psi_{(\xi)}\|_{C^{1/10}_tL^{10/3}}\|\psi_{(\xi)}\|_{C_tL^{10/3}}\Big)
\\&\lesssim (M_L(1+3q)+\gamma_{q+1}) \ell^{-23}r_\perp^{1/5}r_\|^{-7/5}(\mu^{-1}r_\perp^{-1}r_\|)\\
&\qquad\times\Big((r_\perp\lambda_{q+1}\mu/r_\|)^{1/10}+\lambda_{q+1}^{1/5}+(\lambda_{q+1}r_\perp r_\|^{-1})^{1/5}\Big)2^{q+1}
\\&\lesssim M_L\lambda_{q+1}^{{48}\alpha-6/35},
\endaligned
\end{equation}
and
\begin{equation}\label{principle est1nnp}
\aligned
\|w_{q+1}\|_{C^{1/10}_tL^{5/3}}+\| w_{q+1}\|_{C_tW^{1/5,5/3}}
&\lesssim M_L^{1/2}\lambda_{q+1}^{32\alpha-1/{35}}\leq \frac34M_L^{1/2}\delta_{q+1}a^{-\alpha/2}.
\endaligned
\end{equation}
In the last inequality above we used \eqref{parameter2}.
Hence, \eqref{induction wtp} follows from \eqref{error}. The second inequality  in \eqref{inductionv2p} on the level $q+1$ follows as well.

\subsubsection{Proof of \eqref{inductionv C1p} on the level $q+1$}

Using \eqref{estimate aN ps1p} and similar as \cite[Section 3.1.4]{HZZ21} we find for $t\in [0,T_L]$
\begin{equation}\label{principle est2 psp}
\aligned
\|\tilde w_{q+1}^{(p)}\|_{C^1_{t,x}}%
&\lesssim \ell^{-15}((M_L(1+3q))^{1/2} +\gamma_{q+1}^{1/2})r_\perp^{-1}r_\|^{-1/2}\lambda_{q+1}^2,
\endaligned
\end{equation}
\begin{equation}\label{correction est2 psp}
\aligned
\|\tilde w_{q+1}^{(c)}\|_{C^1_{t,x}}
&\lesssim \ell^{-29}((M_L(1+3q))^{1/2}+\gamma_{q+1}^{1/2})r_\|^{-3/2}\lambda_{q+1}^2,
\endaligned
\end{equation}
and
\begin{equation}\label{temporal est2 psp}
\aligned
\|\tilde w_{q+1}^{(t)}\|_{C^1_{t,x}}%
\lesssim \ell^{-24}(M_L(1+3q)+\gamma_{q+1})r_\perp^{-1}r_\|^{-2}\lambda_{q+1}^{1+\alpha}.
\endaligned
\end{equation}
In particular, we see that the fact that the time derivative of $\chi$  behaves like $2\sigma_{q}^{-1}\lesssim \ell^{-1}$ does not pose any problems as the $C^{0}_{t,x}$-norms of $\tilde{w}_{q+1}^{(p)}$, $\tilde{w}_{q+1}^{(c)}$ and $\tilde{w}_{q+1}^{(t)}$ always contain smaller powers of $\ell^{-1}$.

Combining   \eqref{error13p} and \eqref{principle est2 psp}, \eqref{correction est2 psp}, \eqref{temporal est2 psp} with (\ref{ell}) we obtain for $t\in[0, T_L]$
\begin{equation*}
\aligned
\|v^{2}_{q+1}\|_{C^1_{t,x}}&\leq \|v_\ell\|_{C^1_{t,x}}+\|w_{q+1}\|_{C^1_{t,x}}\\
&\leq  (M_L(1+3q)+\gamma_{q+1})^{1/2}\left(\lambda_{q+1}^\alpha+C\lambda_{q+1}^{30\alpha+22/7}+C\lambda_{q+1}^{58\alpha+20/7}+C\lambda_{q+1}^{50\alpha+3}\right)
\leq M_L^{1/2}\lambda_{q+1}^4,
\endaligned
\end{equation*}
where we used $M_L(1+3q)+\gamma_{q+1}\leq \ell^{-1}$.
This implies  \eqref{inductionv C1p}.

\subsubsection{Proof of \eqref{induction wp ps} and the first inequality in \eqref{inductionv2p} on the level $q+1$}
\label{s:745}

Similarly, we derive  the following estimates: for $t\in[0, T_L]$ it follows from (\ref{ell}), (\ref{estimate aN ps1p}) and \eqref{bounds} that
\begin{equation}\label{principle est22 psp}
\aligned
\|\tilde w_{q+1}^{(p)}+\tilde w_{q+1}^{(c)}\|_{C_tW^{1,p}}
&\leq\sum_{\xi\in\Lambda}
\|\textrm{curl\,}\textrm{curl}(a_{(\xi)}V_{(\xi)})\|_{C_tW^{1,p}}
\\
&\lesssim \sum_{\xi\in \Lambda}\|a_{(\xi)}\|_{C^3_{t,x}}\|V_{(\xi)}\|_{C_tW^{3,p}}
\\
&\lesssim  \ell^{-29}((M_L(1+3q))^{1/2}+\gamma_{q+1}^{1/2})r_\perp^{2/p-1}r_\|^{1/p-1/2}\lambda_{q+1},
\endaligned
\end{equation}
and
\begin{equation}\label{corrector est2 psp}
\aligned
\|\tilde w_{q+1}^{(t)}\|_{C_tW^{1,p}}
&\leq\frac{1}{\mu}\sum_{\xi\in\Lambda}
\Big(\|a_{(\xi)}\|_{C^0_{t,x}}\|a_{(\xi)}\|_{C^1_{t,x}}\|\phi_{(\xi)}\|_{L^{2p}}^2\|\psi_{(\xi)}\|_{C_tL^{2p}}^2\\
&\quad+\|a_{(\xi)}\|_{C^0_{t,x}}^2\|\phi_{(\xi)}\|_{L^{2p}}\|\nabla \phi_{(\xi)}\|_{L^{2p}}\|\psi_{(\xi)}\|_{C_tL^{2p}}^2\\
&\quad+\|a_{(\xi)}\|_{C^0_{t,x}}^2\|\phi_{(\xi)}\|_{L^{2p}}^2\|\nabla \psi_{(\xi)}\|_{C_tL^{2p}}\|\psi_{(\xi)}\|_{C_tL^{2p}}\Big)
\\
&\lesssim \ell^{-23}(M_L(1+3q)+\gamma_{q+1})r_\perp^{2/p-2}r_\|^{1/p-1}\lambda_{q+1}^{-2/7}.
\endaligned
\end{equation}

We also have for $p=\frac{32}{32-7\alpha}$, $r_\perp^{2/p-2}r_{\|}^{1/p-1}\leq \lambda_{q+1}^\alpha$ and
\begin{align}\label{corrector est3 psp}
\|w_{q+1}\|_{C_tW^{1,p}}\lesssim (M_L(1+3q)+\gamma_{q+1})\ell^{-29}\lambda_{q+1}^{\alpha-1/7}\lesssim M_L^{1/2}\lambda_{q+1}^{60\alpha-1/7}\leq \frac34M_L^{1/2}\delta_{q+1}^{1/2}a^{-\alpha/2},
\end{align}
where we used the condition for $\alpha, \beta$ and \eqref{parameter2} in the second step, which combined with \eqref{error11p} implies
\eqref{induction wp ps} and hence the first inequality of \eqref{inductionv2p}.

\subsection{Proof of  \eqref{p:gammap}}

We control the energy similarly as in \cite[Section 3.1.5]{HZZ21}.
By definition, we find
\begin{align}\label{eq:deltaE psp}
\begin{aligned}
&\big|\|v^2_{q+1}\|_{L^2}^2-\|v^2_q\|_{L^2}^2-3\gamma_{q+1}\big|
\leq \big|\|\tilde w_{q+1}^{(p)}\|_{L^2}^2-3\gamma_{q+1}\big|+\|\tilde w_{q+1}^{(c)}+\tilde w_{q+1}^{(t)}\|_{L^2}^2\\
&\ +2\|v_\ell(\tilde w_{q+1}^{(c)}+\tilde w_{q+1}^{(t)})\|_{L^1}
+2\|v_\ell \tilde w_{q+1}^{(p)}\|_{L^1}+2\|\tilde w_{q+1}^{(p)}(w_{q+1}^{(c)}+\tilde w_{q+1}^{(t)})\|_{L^1}+|\|v_\ell\|_{L^2}^2-\|v^{2}_q\|_{L^2}^2|.
\end{aligned}
\end{align}
Let us begin with the bound of the first term on the right hand side of \eqref{eq:deltaE psp}. We use \eqref{can} and the fact that $\mathring{R}_\ell$ is traceless to deduce for $t\in (4\sigma_q\wedge T_L,T_L]$
\begin{align*}
|\tilde w_{q+1}^{(p)}|^2-\frac{3\gamma_{q+1}}{(2\pi)^3}=6\sqrt{\ell^2+|\mathring{R}_\ell|^2}+\sum_{\xi\in \Lambda}a_{(\xi)}^2P_{\neq0}|W_{(\xi)}|^2
,
\end{align*}
hence
\begin{align}\label{eq:gg psp}
|\|\tilde w_{q+1}^{(p)}\|_{L^2}^2-3\gamma_{q+1}|\leq 6\cdot(2\pi)^3\ell+6\|\mathring{R}_\ell\|_{L^1}+\sum_{\xi\in \Lambda}\Big|\int a_{(\xi)}^2P_{\neq0}|W_{(\xi)}|^2\Big|.
\end{align}
Here we estimate each term separately. Using \eqref{ell1}  we find
\begin{align*}
6\cdot(2\pi)^3\ell\leq 6\cdot(2\pi)^3\lambda_{q+1}^{-{3\alpha}/2}\leq  \frac{1}{ 48}\lambda_{q+1}^{-2\beta }M_L\leq\frac{1}{ 48}\delta_{q+1}M_L,
\end{align*}
which requires $2\beta <{3\alpha}/2$ and choosing $a$ large to absorb the constant.
Using \eqref{eq:Rp} on $\mathring{R}_q$ and $\mathrm{supp}\varphi_\ell\subset [0,\ell]$ we know for $t\in (4\sigma_q\wedge T_L,T_L]$
\begin{align*}
6\|\mathring{R}_\ell(t)\|_{L^1}\leq  6\delta_{q+1}M_L.
\end{align*}
For the last term in \eqref{eq:gg psp} we use similar argument as in \cite[Section 3.1.5]{HZZ21} to have since $M_{L}\geq 1$
\begin{align*}
\sum_{\xi\in\Lambda}&\Big|\int a_{(\xi)}^2\mathbb{P}_{\neq0}|W_{(\xi)}|^2\Big|\lesssim\lambda_{q+1}^{158\alpha-1/7}(M_L(1+3q)+K)\lesssim\lambda_{q+1}^{160\alpha-1/7}
\leq\frac1{24}\lambda_1^{2\beta}\lambda_{q+1}^{-2\beta} M_L=\frac1{24}\delta_{q+1}M_L,
\end{align*}
where we used $M_{L}(1+3q)+K\leq\ell^{-1}\leq\lambda_{q+1}^{2\alpha}$ as well as $160\alpha+2\beta<1/7$.
This completes the bound of \eqref{eq:gg psp}.

Going back to \eqref{eq:deltaE psp}, we control the  remaining terms as follows.
Using  the estimates \eqref{correction est psp}, \eqref{temporal est1 psp} and \eqref{ell} we  have for $t\in (4\sigma_q\wedge T_L,T_L]$
\begin{align*}
\|\tilde w_{q+1}^{(c)}+\tilde w_{q+1}^{(t)}\|_{L^2}^2&
\lesssim (M_L+\gamma_{q+1})\lambda_{q+1}^{88\alpha-4/7}
+(M_L^2+\gamma_{q+1}^2)\lambda_{q+1}^{64\alpha-2/7}\leq \frac{1}{48}
\lambda_{q+1}^{-2\beta }M_L\leq \frac{\delta_{q+1}}{48}M_L,
\end{align*}
where we use  $M_L+\gamma_{q+1}\leq \ell^{-1}$ to control $M_L+\gamma_{q+1}$. Similarly we use \eqref{eq:vl psp} together with \eqref{estimate wqp psp} to have for $t\in (4\sigma_q\wedge T_L,T_L]$
\begin{align*}
&2\|v_\ell(\tilde w_{q+1}^{(c)}+\tilde w_{q+1}^{(t)})\|_{L^1}+2\|\tilde w_{q+1}^{(p)}(\tilde w_{q+1}^{(c)}+\tilde w_{q+1}^{(t)})\|_{L^1}
\lesssim M_{0}((M_L(1+3q))^{1/2}+K^{1/2})\|\tilde w_{q+1}^{(c)}+\tilde w_{q+1}^{(t)}\|_{L^2}\\
&\qquad \lesssim M_{0}((M_L(1+3q))^{1/2}+K^{1/2})\left((M_L^{1/2}+\gamma_{q+1}^{1/2})\lambda_{q+1}^{44\alpha-2/7}
+(M_L+\gamma_{q+1})\lambda_{q+1}^{32\alpha-1/7}\right)
\\
&\qquad\leq \frac{1}{48}
\lambda_{q+1}^{-2\beta }M_L\leq \frac{\delta_{q+1}}{48}M_L,
\end{align*}
where we used $M_L(1+3q)+K\leq \ell^{-1}$ and we possibly increased $a$ to absorb $M_{0}$.
We use \eqref{ell} and \eqref{principle est1 psp} and $\|v_\ell\|_{C^1_{t,x}}\leq \|v_q^2\|_{C^1_{t,x}} $ to have for every $\kappa>0$
\begin{align*}
2\|v_\ell \tilde w_{q+1}^{(p)}\|_{L^1}
&\lesssim\|v_\ell\|_{L^\infty}\|\tilde w_{q+1}^{(p)}\|_{L^1}\lesssim M_L^{1/2}\lambda_q^4\ell^{-8}(M_L^{1/2}\delta_{q+1}^{1/2}+\gamma_{q+1}^{1/2})r_{\perp}^{1-\kappa}r_{\|}^{\frac12(1-\kappa)}\\
&\lesssim (M_L+K)\lambda_{q+1}^{17\alpha-\frac87(1-\kappa)}\leq \frac{1}{96}
\lambda_{q+1}^{-2\beta }M_L\leq \frac{\delta_{q+1}}{96}M_L.
\end{align*}
For the last  terms by \eqref{eq:vl psp} we have
\begin{align*}
|\|v_\ell\|_{L^2}^2-\|v^{2}_q\|_{L^2}^2|&\leq \|v_\ell-v^{2}_q\|_{L^2}(\|v_\ell\|_{L^2}+\|v^{2}_q\|_{L^2})\\
&\lesssim \ell\lambda_q^4 M_L^{1/2} M_{0}(M_L(1+3q)+K)^{1/2}\\
&\leq \frac{1}{96}
\lambda_{q+1}^{-2\beta }M_L\leq \frac{\delta_{q+1}}{96}M_L .
\end{align*}
which requires
$M_L(1+3q)+K< \lambda_{q+1}^{\alpha-2\beta}$ as in \eqref{parameter1} and $a$ large enough to absorb the extra constant.

Combining the above estimate \eqref{p:gammap} follows.

\subsection{Definition of the Reynolds stress $\mathring{R}_{q+1}$}\label{s:def1p}

Considering the equation for the difference $v^{2}_{q+1}-v_{\ell}$, we obtain the formula for the new Reynolds stress
\begin{equation}\label{stressp}
\aligned
&\div\mathring{R}_{q+1}-\nabla p^{2}_{q+1}\\
&=\underbrace{-\Delta w_{q+1}+\partial_t(\tilde{w}_{q+1}^{(p)}+\tilde{w}_{q+1}^{(c)})+\div((v_\ell+v_{q+1}^1)\otimes {w}_{q+1}+{w}_{q+1}\otimes (v_\ell+v_{q+1}^1))}_{\div(R_{\textrm{lin}})+\nabla p_{\textrm{lin}}}
\\&\quad+\underbrace{\div\left((\tilde{w}_{q+1}^{(c)}+\tilde{w}_{q+1}^{(t)})\otimes w_{q+1}+\tilde{w}_{q+1}^{(p)}\otimes (\tilde{w}_{q+1}^{(c)}+\tilde{w}_{q+1}^{(t)})\right)}_{\div(R_{\textrm{cor}})+\nabla p_{\textrm{cor}}}
\\&\quad+\underbrace{\div(\tilde{w}_{q+1}^{(p)}\otimes \tilde{w}_{q+1}^{(p)}+\mathring{R}_\ell)+\partial_t\tilde{w}_{q+1}^{(t)}}_{\div(R_{\textrm{osc}})+\nabla p_{\textrm{osc}}}
\\&\quad +\underbrace{\div((V^{2}_{q+1}-V^{2}_{q})+(V^{2,*}_{q+1}-V^{2,*}_{q}))}_{\div R_{\text{com}1}+\nabla p_{\textrm{com}1}}
\\&\quad+\underbrace{\div\left((v_\ell+v_{q+1}^1)\otimes(v_\ell+v_{q+1}^1)-(v_q^1+v_{q}^2)\otimes(v_q^1+v_q^2)
	\right)}_{\div(R_{\textrm{com}2})+\nabla p_{\textrm{com}2}}
\\&\quad+\div(R_{\textrm{com}})-\nabla p_\ell,
\endaligned
\end{equation}
where, using the notation $v_q=v_q^1+v_q^2$,
\begin{equation*}
\begin{aligned}
&V^{2}_{q+1}-V^{2}_{q}
\\&= (v^{2}_{q+1}-v_q^2) \osucccurlyeq \Delta_{> R} z^{\<20>}  + (v_{q+1}-v_q) \otimes \Delta_{\leqslant R} z^{\<20>}
\\&\quad  +(v^{2}_{q+1}-v_q^2) \osucc \Delta_{> R} z + (v_{q+1}-v_q) \left( \oprec + \osucc
   \right) \Delta_{\leqslant R} z + (v^{2}_{q+1}-v_q^2) \varocircle z
\\&\quad -\mathbb{P} [(v_{q+1}-v_q )\prec \Delta_{\leqslant R} \mathcal{I} \nabla z]
   \varocircle z - ([\mathbb{P}, (v^{2}_{q+1}-v_q^2) \prec] \mathcal{I} \nabla z)
   \varocircle z
\\&\quad   - ([\mathbb{P}, (v^{1}_{q+1}-v_q^1) \prec] \mathcal{I} \nabla z)
   \varocircle \Delta_{\leqslant R} z
\\&\quad   - \tmop{com} (v^{1}_{q+1}-v_q^1, \mathbb{P}\mathcal{I} \nabla z, \Delta_{\leqslant R}
   z) - \tmop{com} (v^{2}_{q+1}-v_q^2, \mathbb{P}\mathcal{I} \nabla z, z) -(v^{2}_{q+1}-v_q^2) \succcurlyeq \Delta_{> R} z^{\<101>}
\\&\quad    - (v_{q+1}-v_q) \cdot \Delta_{\leqslant R} z^{\<101>} + (v^{\sharp}_{q+1}-v_q^{\sharp})
   \varocircle \Delta_{\leqslant R} z.
\end{aligned}
\end{equation*}

Applying the inverse divergence operator $\mathcal{R}$ we define
\begin{equation*}
R_{\textrm{lin}}:=-\mathcal{R}\Delta w_{q+1}+\mathcal{R}\partial_t(\tilde{w}_{q+1}^{(p)}+\tilde{w}_{q+1}^{(c)})
+(v_\ell+v_{q+1}^1)\,\mathring{\otimes}\, w_{q+1}+w_{q+1}\,\mathring{\otimes}\, (v_\ell+v_{q+1}^1),
\end{equation*}
\begin{equation*}
R_{\textrm{cor}}:=(\tilde{w}_{q+1}^{(c)}+\tilde{w}_{q+1}^{(t)})\,\mathring{\otimes}\, w_{q+1}+\tilde{w}_{q+1}^{(p)}\,\mathring{\otimes}\, (\tilde{w}_{q+1}^{(c)}+\tilde{w}_{q+1}^{(t)}),
\end{equation*}
$R_{\textrm{com}1}$ is the trace-free part of the matrix
$$
(V^{2}_{q+1}-V^{2}_{q})+(V^{2,*}_{q+1}-V^{2,*}_{q}),
$$
and
\begin{equation*}
R_{\textrm{com}2}:=(v_\ell+v_{q+1}^1)\,\mathring{\otimes}\, (v_\ell+v_{q+1}^1)-(v_q^2+v_{q}^1)\,\mathring{\otimes}\, (v_q^2+v_q^1).
\end{equation*}
Similarly as in \cite{HZZ21}, using \eqref{can}, \eqref{equation for temporal}, the oscillation error is given by
\begin{equation*}
\aligned
R_{\textrm{osc}}&:=\chi^2\sum_{\xi\in\Lambda}\mathcal{R}
\left(\nabla a_{(\xi)}^2\mathbb{P}_{\neq0}(W_{(\xi)}\otimes W_{(\xi)}) \right)-\frac{\chi^2}{\mu}\sum_{\xi\in\Lambda}\mathcal{R}
\left(\partial_t a_{(\xi)}^2(\phi_{(\xi)}^2\psi_{(\xi)}^2\xi) \right)
\\&\qquad+\mathcal{R}w_{q+1}^{(t)}\p_t\chi^2+(1-\chi^2)\mathring{R}_\ell
\\&=:R_{\textrm{osc}}^{(x)}+R_{\textrm{osc}}^{(t)}+R_{\textrm{cut}}^{(1)}.
\endaligned
\end{equation*}
Finally we define the Reynolds stress on the level $q+1$ by
\begin{equation*}\aligned
\mathring{R}_{q+1}:=R_{\textrm{lin}}+R_{\textrm{cor}}+R_{\textrm{osc}}+R_{\textrm{com}}+R_{\textrm{com}1}+R_{\textrm{com}2}.
\endaligned
\end{equation*}
We observe that %
by  construction, $\mathring{R}_{q+1}$ is $(\mathcal{F}_t)_{t\geq0}$-adapted.

\subsection{Verification of the inductive estimates  for $\mathring{R}_{q+1}$}
\label{sss:Rp}

We shall establish the three bounds in \eqref{iteration Rp}. As the oscillations $w_{q+1}$ were  fully added for $t\in(\sigma_{q}\wedge T_{L},T_{L}]$, this is the good interval where  the desired smallness of $\mathring{R}_{q+1}$ is achieved. In the middle interval $t\in(\frac{\sigma_{q}}{2}\wedge T_{L},\sigma_{q}\wedge T_{L}]$, there is a part of $\mathring{R}_{q+1}$ involving the cut-off $1-\chi^{2}$, which can only be bounded by the previous stress $\mathring{R}_{q}$. In the time interval $t\in[0,\frac{\sigma_{q}}2\wedge T_{L}]$, there are no oscillations which could decrease the Reynolds stress and hence we can only prove a polynomial blow-up. Nevertheless, this eventually leads to convergence in $L^{p}$ in time as this bad time interval  is shrinking exponentially, cf. the proof of Theorem~\ref{thm:6.1}. Here, it is essential that we do not use regularity of $v^{1}_{q}$ and $v^{\sharp}_{q}$ to avoid the blow-up in time.

\medskip

\textbf{Case I.} Let $t\in (\sigma_q\wedge T_L,T_L]$. If $T_{L}\leq \sigma_{q}$ then there is nothing to estimate here, hence we assume that $\sigma_{q}<T_{L}$ and $t\in (\sigma_{q},T_{L}]$. In this regime, it holds  $\chi=1$ and so the truncation does not play any role in the estimates.
We estimate each term in the definition of $\mathring{R}_{q+1}$ separately.
We choose $p=\frac{32}{32-7\alpha}>1$ so that in particular that $r_\perp^{2/p-2}r_\|^{1/p-1}\leq \lambda_{q+1}^\alpha$.
For the linear  error we apply \eqref{inductionv C1p} to obtain
\begin{equation*}
\aligned
\|R_{\textrm{lin}}(t)\|_{L^1}
&\lesssim\|\mathcal{R}\Delta w_{q+1}\|_{L^p}+\|\mathcal{R}\partial_t(w_{q+1}^{(p)}+w_{q+1}^{(c)})\|_{L^p}+\|(v_\ell+v_{q+1}^1)\mathring{\otimes}w_{q+1}+w_{q+1}\mathring{\otimes}(v_\ell+v_{q+1}^1)\|_{L^1}
\\
&\lesssim\|w_{q+1}\|_{W^{1,p}}+\sum_{\xi\in\Lambda}\|\partial_t\textrm{curl}
(a_{(\xi)}V_{(\xi)})\|_{L^p}+(M_L^{1/2}\lambda_{q}^4+\|v_{q+1}^1(t)\|_{L^4})\|w_{q+1}\|_{L^{4/3}},
\endaligned
\end{equation*}
where by \eqref{bounds} and \eqref{estimate aN ps1p}
\begin{equation*}
\aligned
&\sum_{\xi\in\Lambda}\|\partial_t\textrm{curl}
(a_{(\xi)}V_{(\xi)})\|_{C_tL^p}\leq \sum_{\xi\in\Lambda}\left(\|
a_{(\xi)}\|_{C_tC^1_x}\|\partial_t V_{(\xi)}\|_{C_tW^{1,p}}+\|\partial_ta_{(\xi)}\|_{C_tC^1_x}\| V_{(\xi)}\|_{C_tW^{1,p}}\right)\\
&\lesssim (M_L(1+3q)+\gamma_{q+1})^{1/2} \ell^{-15}r_{\perp}^{2/p}r_{\|}^{1/p-3/2}\mu+(M_L(1+3q)+\gamma_{q+1})^{1/2} \ell^{-22}r_{\perp}^{2/p-1}r_{\|}^{1/p-1/2}\lambda_{q+1}^{-1}.
\endaligned
\end{equation*}

\br
For the  product $v^{1}_{q+1}\otimes w_{q+1}$ we used the $L^{4}$-norm of $v^{1}_{q}$ instead of $L^{2}$  in order to lower the required integrability of $w_{q+1}$. Indeed, $w_{q+1}$ is not  small in $L^{2}$ for $t\in(\sigma_{q},T_{L}]$, cf. \eqref{estimate wq psp}, \eqref{estimate wq ps1p}. On the other hand, as a consequence of \eqref{estimatev42}, increasing the integrability of $v^{1}_{q+1}$ leads to a blow-up in two respects: there is a blow up as $t\to0$ but also the time-weighted norm in $C_{t,3/8}L^{4}$  has only a diverging bound as $q\to\infty$. We show below that both these divergencies are compensated by the smallness of $w_{q+1}$ in $L^{4/3}$ and by using the fact that $t>\sigma_{q}$.
\er
In view of \eqref{corrector est3 psp}, \eqref{principle est1 ps1p}, \eqref{corr temporal ps1p} as well as \eqref{estimatev42} applied on the level $q+1$,  we  deduce  for $t\in(\sigma_q,T_L]$
\begin{equation*}
\aligned
\|R_{\textrm{lin}}(t)\|_{L^p}
&\lesssim M_L^{1/2}\lambda_{q+1}^{60\alpha-1/7}
 +(M_L(1+3q)+\gamma_{q+1})^{1/2}\Big(
\ell^{-22}\lambda_{q+1}^{\alpha-1/7}
\\&\ +M_L^{1/2}\ell^{-8}r_\perp^{1/2}r_\|^{1/4}(\lambda^4_{q}+\sigma_q^{-3/{8}}\lambda_{q+1}^{1/3+\kappa})\Big)
\\&\lesssim M_L^{1/2}\lambda_{q+1}^{60\alpha-1/7}+(M_L(1+3q)+\gamma_{q+1})^{1/2}M_L^{1/2}\lambda_{q+1}^{16\alpha-5/21+\kappa}\sigma_q^{-3/{8}}
\\
&\leq\frac{M_L\delta_{q+2}}{5}.
\endaligned
\end{equation*}
Here, we have taken $a$ sufficiently large and $\beta$ sufficiently small.

The estimates of $R_{\textrm{cor}}$ and   $R_{\textrm{osc}}$ are the same as the corresponding bounds in \cite{HZZ21}. The strongest requirement comes from the bound of $R_{\rm{cor}}$, namely, we have
$$
\|R_{\rm cor}(t)\|_{L^{p}}\lesssim((M_L(1+3q))^{1/2}+\gamma_{q+1}^{1/2})^3\lambda_{q+1}^{49\alpha-1/7}\leq \ell^{-3/2}\lambda_{q+1}^{49\alpha-1/7}
\leq\frac{M_{L}}{5}\lambda_{q+1}^{-2\beta b}\leq \frac{M_{L}}{5}\delta_{q+2},
$$
which is satisfied provided $ \lambda_{q+1}^{52\alpha-1/7}\leq\frac{M_{L}}{5}\lambda_{q+1}^{-2\beta b}.$

We use  standard mollification estimates in order to bound $R_{\textrm{com}}$. More precisely,  $R_{\rm{com}}$ has to vanish sufficiently fast in order to fulfill the first bound in \eqref{iteration Rp}. Hence, we need to use regularity of each term in $N$ which then by mollification estimates leads to the desired decay. To this end, there is a number of terms which require spatial regularity of $v^{1}_{q}$ as well as $v^{\sharp}_{q}$. But these norms  blow up as $t\to0$, cf. Proposition~\ref{p:v1}. Thus, we make use of the fact that $t>\sigma_{q}$ and that the corresponding blow-up of order  $\sigma_{q}^{-1/6}$ and $\sigma_{q}^{-3/10}$, respectively, can be absorbed by the smallness of $\ell$ due to our choice of the parameters in \eqref{parameter1}.

Let us now consider each term in $R_{\textrm{com}}$ separately.
For $I_1:=(v_q^1+v_q^2)\otimes \Delta_{\leq R}z^{\<20>}-(v_q^1+v_q^2)\cdot \Delta_{\leq R}z^{\<101>}$ we use Proposition \ref{p:v1} and \eqref{inductionv2p} to have
\begin{align*}
&\|I_1-I_1*_x\phi_\ell*_{t}\varphi_\ell\|_{L^1}
\\&\lesssim
\ell^{1/10}(\|v_q^2\|_{C_t^{1/10}L^{5/3}}+\|v_q^1\|_{C_{[\sigma_q/2,t]}^{1/6-\kappa}L^{5/3}})(\|\Delta_{\leq R} z^{\<20>}\|_{C_t^{1/10}L^\infty}+\|\Delta_{\leq R} z^{\<101>}\|_{C_t^{1/10}L^\infty})
\\&\quad+\ell^{1/10}(\|v_q^2\|_{C_tW^{1/5,5/3}}+\|v_q^1\|_{C_{[\sigma_q/2,t]}B^{1/6-\kappa}_{5/3,\infty}})(\|\Delta_{\leq R} z^{\<20>}\|_{C_tC^{1/10}}+\|\Delta_{\leq R} z^{\<101>}\|_{C_tC^{1/10}})
\\&\lesssim \ell^{1/10} \sigma_q^{-1/6}(M_L^{1/2}a^{-\alpha/4}+L^3N+L^4)L^{10},
\end{align*}
where we used
$$\|\Delta_{\leq R} z^{\<20>}\|_{C_tC^{1/10}}+\|\Delta_{\leq R} z^{\<20>}\|_{C_t^{1/10}L^{\infty}}\lesssim 2^{(1/5+2\kappa)R}(\| z^{\<20>}\|_{C^{1/10}_tC^{-1/5-\kappa}}+\| z^{\<20>}\|_{C_tC^{-\kappa}})\lesssim L^{10}$$
and similarly for $z^{\<101>}$.
The same estimate also holds for the symmetric counterpart.
For $I_2:=(v_q^1+v_q^2)\otimes (v_q^1+v_q^2)$ we use \eqref{inductionv psp} and \eqref{estimatev12} to have
\begin{align*}
&\|I_2-I_2*_x\phi_\ell*_{t}\varphi_\ell\|_{L^1}
\\&\lesssim\Big(\ell\|v_q^2\|_{C^1_{t,x}}+\ell^{1/{61}}\|v_q^1\|_{C_{[\sigma_q/2,t]}^{1/61}L^{2}}+\ell^{1/{31}}\|v_q^1\|_{C_{[\sigma_q/2,t]}H^{1/{30}-3\kappa}}\Big)(\|v^1_q\|_{C_tL^2}+\|v^2_q\|_{C_tL^2})
\\&\lesssim \sigma_{q}^{-1/6}\ell^{1/{61}}(M_La^{-{\alpha}/{8}}+L^6N^2+L^8)+(\ell\lambda_{q}^4+\sigma_q^{-1/6}\ell^{1/{61}}) M_L^{1/2}(M_L^{1/2}(1+3q)^{1/2}+K^{1/2})
 \\&\lesssim M_L^{1/2}(M_L(1+3q)+K)^{1/2}(\ell\lambda_q^4+\ell^{1/{61}}\sigma_q^{-1/6}),
\end{align*}
where we used interpolation and Proposition \ref{p:v1} and the  embedding Lemma \ref{lem:emb} to have
\begin{align*}
\|v_q^1\|_{C_{t,1/6}^{1/{61}}L^2}&\lesssim \|v_q^1\|^{\frac9{10(1-9\kappa)}}_{C_{t,1/6}H^{1/30-3\kappa}}\|v_q^1\|^{\frac{1-90\kappa}{10(1-9\kappa)}}_{C_{t,1/6}^{1/6-\kappa}H^{-3/10}}
\lesssim \|v_q^1\|^{\frac9{10(1-9\kappa)}}_{C_{t,1/6}B^{1/3-2\kappa}_{5/3,\infty}}\|v_q^1\|^{\frac{1-90\kappa}{10(1-9\kappa)}}_{C_{t,1/6}^{1/6-\kappa}L^{5/3}}
\\&\lesssim M_L^{1/2}a^{-\alpha/4}+L^3N+L^4\lesssim M_L^{1/2},
\end{align*}
provided $\kappa$ was chosen sufficiently small.
For $I_3:=v_q^2\osucccurlyeq \Delta_{> R}z^{\<20>}-v_q^2\succcurlyeq \Delta_{> R}z^{\<101>}$ we use paraproduct estimates Lemma \ref{lem:para} and \eqref{inductionv C1p} to have
\begin{align*}
&\|I_3-I_3*_x\phi_\ell*_{t}\varphi_\ell\|_{L^1}
\\&\lesssim\ell^{1/10}\|v_q^2\|_{C_{t,x}^1}(\| z^{\<20>}\|_{C_tC^{-\kappa}}+\|z^{\<101>}\|_{C_tC^{-\kappa}}+\| z^{\<20>}\|_{C_t^{1/10}C^{-1/5-\kappa}}+\| z^{\<101>}\|_{C_t^{1/10}C^{-1/5-\kappa}})
\\&\lesssim \ell^{1/10}\lambda_q^4LM_L^{1/2}.
\end{align*}
For $I_4:=(v_q^1+v_q^2)(\oprec+\osucc)\Delta_{\leq R}z$ we use paraproduct estimates Lemma \ref{lem:para} and \eqref{inductionv C1p} to have
\begin{align*}
&\|I_4-I_4*_x\phi_\ell*_{t}\varphi_\ell\|_{L^1}
\\&\lesssim\ell^{1/10}(\|v_q^2\|_{C_{t,x}^1}+\|v_q^1\|_{C_{[\sigma_q/2,t]}^{1/10}B^{1/{10}}_{5/3,\infty}})(\|\Delta_{\leq R} z\|_{C_t^{1/10}L^\infty}+\|\Delta_{\leq R} z\|_{C_tC^{1/{10}}})
\\&\lesssim\ell^{1/10}L^{29}\Big(M_L^{1/2}\lambda_q^4+\sigma_q^{-1/6}(a^{-\alpha/4}M_L^{1/2}+L^3N+L^4)\Big),
\end{align*}
where we used
$$\|\Delta_{\leq R} z\|_{C_tC^{1/10}}+\|\Delta_{\leq R} z\|_{C_t^{1/10}L^\infty}\lesssim 2^{R(7/{10}+2\kappa)}L^{1/2}\leq L^{29},$$
and by interpolation and Proposition \ref{p:v1}
\begin{align}\label{estimatevq}\|v_q^1\|_{C_{t,1/6}^{1/10}B^{1/{10}}_{5/3,\infty}}\lesssim\|v_q^1\|_{C_{t,1/6}B^{1/3-2\kappa}_{5/3,\infty}}+\|v_q^1\|_{C_{t,1/6}^{1/6-\kappa}L^{5/3}}
\lesssim M_L^{1/2}a^{-\alpha/4}+L^3N+L^4.
\end{align}
For $I_5:=v^2_q\osucc\Delta_{> R}z +v_q^2\varocircle z+v_q^\sharp \varocircle \Delta_{\leq R}z$ we use paraproduct estimates Lemma \ref{lem:para}, Proposition~\ref{p:v1} and \eqref{inductionv C1p} to have
\begin{align*}
&\|I_5-I_5*_x\phi_\ell*_{t}\varphi_\ell\|_{L^1}\\
&\lesssim
\ell^{1/24}(\|v_q^2\|_{C^1_{t,x}}+\|v_q^\sharp\|_{C_{[\sigma_q/2,t]}B^{3/5-\kappa}_{5/3,\infty}}+\|v_q^\sharp\|_{C^{1/20}_{[\sigma_q/2,t]}B^{{11}/{20}-\kappa}_{5/3,\infty}})(\|z\|_{C_t^{1/24}C^{-7/12-\kappa}}+\|z\|_{C_tC^{-1/2-\kappa}})
\\&\lesssim\ell^{1/24}LM_L^{1/2}\Big(\lambda_q^4+\sigma_q^{-3/10}(a^{-\alpha/8}M_L^{1/2}+L^5N+L^6)\Big)\lesssim \ell^{1/24}M_L(\lambda_q^4+\sigma_q^{-3/10}).
\end{align*}
Here we used $L\leq a^{\alpha/16}$ by the choice of the parameters and interpolation to bound $\|z\|_{C_t^{1/24}C^{-7/12-\kappa}}$ by
$$\|z\|_{C_tC^{-1/2-\kappa}}+\|z\|_{C^{1/10}_tC^{-7/10-\kappa}}\lesssim L^{1/2}.$$
For $I_6:=\mathbb{P}[v_q\prec \Delta_{\leq R}\cI(\nabla z)]\varocircle z$ we use \eqref{inductionv2p}, \eqref{estimatev12} and Lemma \ref{lem:para} to have
\begin{align*}
&\|I_6-I_6*_x\phi_\ell*_t\varphi_\ell\|_{L^1}
\\&\lesssim \ell^{1/{10}}(\|v_q^2\|_{C_t^{1/{10}}L^{5/3}}+\|v_q^1\|_{C^{1/{10}}_{[{\sigma_q}/2,t]}L^{5/3}})\\
&\qquad\qquad\qquad\times\Big(\|z\|_{C_tC^{-1/2-\kappa}}(\|\Delta_{\leq R}\cI\nabla z\|_{C_tC^{3/5+2\kappa}}+\|\Delta_{\leq R}\cI\nabla z\|_{C_t^{1/{10}}C^{1/2+2\kappa}})
\\&\qquad\qquad\qquad\qquad+\|z\|_{C_t^{1/{10}}C^{-7/{10}-\kappa}}\|\Delta_{\leq R}\cI\nabla z\|_{C_tC^{7/{10}+2\kappa}}\Big)
\\&\lesssim\ell^{1/{10}}L^{10}\sigma_q^{-1/6}(M_L^{1/2}a^{-\alpha/4}+L^3N+L^4)\lesssim \ell^{1/{10}}M_L\sigma_q^{-1/6}.
\end{align*}
Here we used $$\|\Delta_{\leq R}\cI\nabla z\|_{C_tC^{7/10+2\kappa}}+\|\Delta_{\leq R}\cI\nabla z\|_{C_t^{1/{10}}C^{1/2+2\kappa}}\lesssim 2^{R(1/5+3\kappa)}\|z\|_{C_tC^{-1/2-\kappa}}\lesssim L^9.$$

In view of the  two commutator estimates, Lemma~\ref{lem:com1} and Lemma~\ref{lem:com2}, the remaining terms containing $v^2_q$ could be controlled by
$$
\ell^{1/10}\|v_q^2\|_{C^1_{t,x}}(\| z\|^2_{C_tC^{-1/2-\kappa}}+\| z\|_{C_tC^{-1/2-\kappa}}\| z\|_{C^{1/10}_tC^{-7/10-\kappa}})\lesssim \ell^{1/10}\lambda_q^4M_L^{1/2}L,$$
and the remaining terms containing $v^1_q$ could be controlled by
\begin{align*}
&\ell^{1/10}\|v_q^1\|_{C^{1/10}_{[\sigma_q/2,t]}B^{1/10}_{5/3,\infty}}\Big(\| z\|_{C_tC^{-1/2-\kappa}}\| \Delta_{\leq R}z\|_{C_tC^{-1/2+1/5+2\kappa}}+\| z\|_{C_tC^{-1/2-\kappa}}\| \Delta_{\leq R}z\|_{C^{1/10}_tC^{-1/2+2\kappa}}\Big)
\\&\lesssim \ell^{1/10}L^{10}\sigma_q^{-1/6}(M_L^{1/2}a^{-\alpha/4}+L^3N+L^4)\lesssim \ell^{1/10}M_L\sigma_q^{-1/6}.
\end{align*}
Here we used \eqref{estimatev12} and \eqref{estimatevq} and
$$
\begin{aligned}
&\| \Delta_{\leq R}z\|_{C_tC^{-1/2+1/5+2\kappa}}+\| \Delta_{\leq R}z\|_{C^{1/10}_tC^{-1/2+2\kappa}}
\\&\lesssim 2^{R(1/5+3\kappa)}(\|z\|_{C_tC^{-1/2-\kappa}}+\|z\|_{C^{1/10}_tC^{-7/10-\kappa}})\lesssim L^9.
\end{aligned}
$$
Therefore, we have that for $t\in(\sigma_q, T_L]$
\begin{equation*}
\aligned
\|R_{\textrm{com}}(t)\|_{L^1}
&\lesssim M_L^{1/2}(\ell^{1/61}\sigma_q^{-1/6}+\ell\lambda_q^4) (M_L(1+3q)+K)^{1/2}+\ell^{1/{24}}M_L\lambda_{q}^4+\ell^{1/{24}}M_L\sigma_q^{-3/{10}}
\\&\leq \frac{M_L\delta_{q+2}}{5},
\endaligned
\end{equation*}
where we used
the  choice of $\ell$ in \eqref{ell1} and the conditions
$$\alpha>244\beta b,\quad\alpha b>128,\quad(M_L(1+3q)+K)^{1/2}\leq \ell^{-1/183},\quad\sigma_{q}^{-1/3}<\ell^{-1/183},$$
which can indeed be achieved by our conditions on the parameters.

Next, by the choice of $\alpha, \beta$ and $b$ we use paraproduct estimates Lemma \ref{lem:para} we can bound $R_{\textrm{com1}}$ uniformly over the interval $[0,t]$ for $p=\frac{32}{32-7\alpha}$
\begin{align*}
\|R_{\textrm{com}1}\|_{{C_{t}}L^1}&\lesssim (\|v^2_{q+1}-v^2_q\|_{C_tW^{2/3,p}}+\|v^1_{q+1}-v^1_q\|_{C_tB_{5/3,\infty}^{1/3-2\kappa}}+\|v^\sharp_{q+1}-v^\sharp_q\|_{C_tL^{5/3}})L^{21}
\\&\lesssim L^{21}M_L^{1/2}(\lambda_{q}^{-1/42}+\lambda_{q+1}^{-15\alpha/32}+\lambda_{q+1}^{60\alpha-1/7}+\lambda_{q+1}^{32\alpha-1/35})
\\&\leq \frac{M_L\delta_{q+2}}{10},
\end{align*}
where we used \eqref{error11p} and \eqref{corrector est3 psp} to control $\|v^2_{q+1}-v^2_q\|_{C_tW^{2/3,p}}$, \eqref{est:vq}, \eqref{error} and \eqref{principle est1nnp} to control $\|v^1_{q+1}-v^1_q\|_{C_tB_{5/3,\infty}^{1/3-2\kappa}}$ and also  \eqref{vsharp2}, \eqref{error} and \eqref{principle est1nnp} to control $\|v^\sharp_{q+1}-v^\sharp_q\|_{C_tL^{5/3}}$. %
Moreover, we applied
$1>168\beta b^2$  and $-32\alpha+\frac1{35}>2\beta b$, the condition on $\alpha,\beta,b$ and
\begin{align}\label{eq:Rz}
\| \Delta_{\leq R}z\|_{C_tC^{\kappa}}\lesssim 2^{R(1/2+2\kappa)}\|z\|_{C_tC^{-1/2-\kappa}}\lesssim L^{21}.
\end{align}

Furthermore, we use Proposition \ref{p:v1}, \eqref{est:vq}, \eqref{inductionv psp} and \eqref{principle est1nnp} to estimate also $R_{\textrm{com}2}$ uniformly over $[0,t]$ to obtain
\begin{align*}
\|R_{\textrm{com}2}\|_{C_tL^1}&\lesssim (M_L(1+3q)+K)^{1/2}(\|v^1_{q+1}-v^1_q\|_{C_tL^2}+\|v_\ell-v^2_q\|_{C_tL^2})
\\&\lesssim (M_L(1+3q)+K)^{1/2}M_L^{1/2}(\lambda_{q}^{-1/42}+\lambda_{q+1}^{-\alpha}+\lambda_{q+1}^{32\alpha-{1}/{35}})
\\&\leq \frac{M_L\delta_{q+2}}{10},
\end{align*}
Here we used $\alpha>244\beta b,$ $1>168\beta b^2$  and  $(M_L(1+3q)+K)^{1/2}\leq  \lambda_q^{1/84}<\ell^{-1/183}$.

\br
We note that it was essential in the estimate of $R_{\textrm{com}1}$ and $R_{\textrm{com}2}$ that no time weights were required for the difference $v^{1}_{q+1}-v^{1}_{q}$ and $v^\sharp_{q+1}-v^\sharp_q$. Indeed,  there would be no way  to absorb the  weight as time approaches zero. We also note that the bounds of $R_{\textrm{com}1}$ and $R_{\textrm{com}2}$ hold directly for all $t\in[0,T_{L}]$.
\er

Summarizing all the above estimates we obtain
\begin{equation*}
\aligned
&\|\mathring{R}_{q+1}(t)\|_{L^1}\leq M_L\delta_{q+2},
\endaligned
\end{equation*}
which is the desired bound.

\medskip

\textbf{Case II.} Let  $t\in (\frac{\sigma_q}2\wedge T_L, \sigma_q\wedge T_L]$. If $T_{L}\leq \frac{\sigma_{q}}2$ then there is nothing to estimate, hence we may assume $\frac{\sigma_{q}}2<T_{L}$ and  $t\in (\frac{\sigma_q}2, \sigma_q\wedge T_L]$. Then
we decompose $\mathring{R}_\ell=\chi^2\mathring{R}_\ell+(1-\chi^2)\mathring{R}_\ell$. The first part $\chi^2\mathring{R}_\ell$ is canceled (up to the oscillation error $\chi^{2}R_{\rm osc}$) by
$\tilde w_{q+1}^{(p)}\otimes \tilde w_{q+1}^{(p)}=\chi^{2} w_{q+1}^{(p)}\otimes  w_{q+1}^{(p)}$
and $\chi^2\partial_tw_{q+1}^{(t)}=\partial_t\tilde w_{q+1}^{(t)}-(\chi^2)'w_{q+1}^{(t)}$. So in this case in the definition of $\mathring{R}_{q+1}$ most terms are similar to   \textbf{Case I.}  and can be estimated similarly as above.
Therefore, we only have to consider $(1-\chi^2)\mathring{R}_\ell$, and
$$\div R_{\mathrm{cut}}:=\chi'(t)(w_{q+1}^{(p)}(t)+w_{q+1}^{(c)}(t))+(\chi^2)'(t)w_{q+1}^{(t)}(t).$$
We know
$$\|(1-\chi^2)\mathring{R}_\ell(t)\|_{L^1}\leq\sup_{s\in [t-\ell,t]} \|\mathring{R}_q(s)\|_{L^1}.$$
For $R_{\mathrm{cut}}$ we realize that the bounds \eqref{principle est1 ps1p} and \eqref{corr temporal ps1p} also hold for $w_{q+1}^{(p)}$, $w_{q+1}^{(c)}$ and $w_{q+1}^{(t)}$. Then we have for $\kappa>0$
\begin{align*}
\|R_{\mathrm{cut}}(t)\|_{L^1}&\leq\|\chi'(t)(w_{q+1}^{(p)}(t)+w_{q+1}^{(c)}(t))\|_{L^1}+\|(\chi^2)'(t)w_{q+1}^{(t)}(t)\|_{L^1}\\
&\lesssim\frac1{\sigma_q}\|w_{q+1}^{(p)}(t)\|_{L^1}+\frac1{\sigma_q}\|w_{q+1}^{(c)}(t)\|_{L^1}+\frac1{\sigma_q}\|w_{q+1}^{(t)}(t)\|_{L^1}
\\
&\leq\frac1{\sigma_q}\ell^{-8}((M_L+3qM_L)^{1/2}+\gamma_{q+1}^{1/2})r_\perp^{1-2\kappa}r_{\|}^{1/2-\kappa} \leq \frac{M_L}{10}\delta_{q+2},
\end{align*}
where we use $\sigma_q^{-1}\leq \ell^{-1}$ and $M_L^{1/2}(1+3q)^{1/2}+\gamma_{q+1}^{1/2}\leq \ell^{-1}$. For $R_{\textrm{com}}$, where the required space regularity of $v^{1}_{q}$ and $v^{\sharp}_{q}$ leads to a blow-up in time, we again use the fact that $t\geq \sigma_q/2$ and $4\ell\leq \sigma_q$ to have a similar bound as in the first case.

\medskip

\textbf{Case III.}
For $t\in [0,\frac{\sigma_q}2\wedge T_L]$ we know $v_\ell(t)=v_{q+1}^2(t)=0$ and so the Reynolds stress reduces to
$$\mathring{R}_{q+1}=\mathring{R}_\ell+R_{\mathrm{com1}}+R_{\mathrm{com}}+R_{\mathrm{com2}}.$$
The bounds for ${R_{\mathrm{com1}}}$ and $R_{\mathrm{com2}}$  hold as in \textbf{Case I.}.
Unlike \textbf{Case I.} and \textbf{Case II.}, here we cannot use regularity of $v^{1}_{q}$ due to the blow-up at $t=0$ as $t$ is no longer bounded away from zero. On the other hand, we do not have to show smallness of $R_{\textrm{com}}$ but only a polynomial blow-up, see \eqref{iteration Rp}.
Therefore, we do not try to use the mollification estimates. Instead, we bound  $\|R_{\mathrm{com}}(t)\|_{L^1}$ directly using Lemma~\ref{lem:para} and \eqref{eq:Rz}, \eqref{estimatev12}, \eqref{vsharp1} as
\begin{align}\label{R1}
\|R_{\mathrm{com}}(t)\|_{L^1}&\lesssim \|v_q^1\|_{C_tL^2}(\|\Delta_{\leq R}z^{\<20>}\|_{C_tL^\infty}+\|\Delta_{\leq R}z^{\<101>}\|_{C_tL^\infty}+\|\Delta_{\leq R}z\|_{C_tC^{\kappa}})\nonumber\\&\quad+\|v_q^1\|_{C_tL^2}^2+\|v_q^\sharp\|_{C_tL^{5/3}}\|\Delta_{\leq R}z\|_{C_tC^{\kappa}}+\|v_q^1\|_{C_tL^2}\| z\|_{C_tC^{-1/2-\kappa}}\|\Delta_{\leq R}z\|_{C_tC^{-1/2+2\kappa}}\nonumber
\\&\leq 2M_L,
\end{align}
where we used \eqref{eq:Rz} and the implicit constant can be absorbed by taking $a$ and $L$ large enough.

\br
We point out that for the two commutators in $R_{\textrm{com}}$ we  did not apply the commutator estimates   Lemma \ref{lem:com1} and Lemma \ref{lem:com2}, as these would require regularity of $v^{1}_{q}$. Instead, we estimated each term by the paraproduct estimates directly.
\er

As a result, it follows
$$\|\mathring{R}_{q+1}(t)\|_{L^1}\leq \sup_{s\in [t-\ell,t]} \|\mathring{R}_q(s)\|_{L^1}
+3M_L,$$
which completes the proof of \eqref{iteration Rp} and also \eqref{eq:Rp} and \eqref{bd:Rp}.

 \appendix
\renewcommand{\appendixname}{Appendix~\Alph{section}}
  \renewcommand{\theequation}{A.\arabic{equation}}
  \section{Intermittent jets}
  \label{s:B}

In this  part we recall the construction of intermittent jets from \cite[Section 7.4]{BV19}.
We point out that the construction is entirely deterministic, that is, none of the functions below depends on $\omega$.
Let us begin with  the following geometric lemma which can be found in  \cite[Lemma 6.6]{BV19}.

\bl\label{geometric}
Denote by $\overline{B_{1/2}}(\mathrm{Id})$ the closed ball of radius $1/2$ around the identity matrix $\mathrm{Id}$, in the space of $3\times 3$ symmetric matrices. There
exists $\Lambda\subset \mathbb{S}^2\cap \mathbb{Q}^3$ such that for each $\xi\in \Lambda$ there exists a  $C^\infty$-function $\gamma_\xi:\overline{B_{1/2}}(\mathrm{Id})\rightarrow\mathbb{R}$ such that
\begin{equation*}
R=\sum_{\xi\in\Lambda}\gamma_\xi^2(R)(\xi\otimes \xi)
\end{equation*}
for every symmetric matrix satisfying $|R-\mathrm{Id}|\leq 1/2$.
For $C_\Lambda=8|\Lambda|(1+8\pi^3)^{1/2}$, where $|\Lambda|$ is the cardinality of the set $\Lambda$, we define
the constant
\begin{equation*}
M=C_\Lambda\sup_{\xi\in \Lambda}(\|\gamma_\xi\|_{C^0}+\sum_{|j|\leq N}\|D^j\gamma_\xi\|_{C^0}).
\end{equation*}
 For each $\xi\in \Lambda$ let us define $A_\xi\in \mathbb{S}^2\cap \mathbb{Q}^3$ to be an orthogonal vector to $\xi$. Then for each $\xi\in\Lambda$ we have that $\{\xi, A_\xi, \xi\times A_\xi\}\subset \mathbb{S}^2\cap \mathbb{Q}^3$ form an orthonormal basis for $\mathbb{R}^3$.
We label by $n_*$ the smallest natural such that
\begin{equation*}\{n_*\xi, n_*A_\xi, n_*\xi\times A_\xi\}\subset \mathbb{Z}^3\end{equation*}
for every $\xi\in \Lambda$.
\el

Let $\Phi:\mathbb{R}^2\rightarrow\mathbb{R}$ be a smooth function with support in a ball of radius $1$. We normalize $\Phi$ such that
$\phi=-\Delta \Phi$ obeys
\begin{equation}\label{eq:phi}
\frac{1}{4\pi^2}\int_{\mathbb{R}^2}\phi^2(x_1,x_2)\dif x_1\dif x_2=1.
\end{equation}
By definition we know $\int_{\mathbb{R}^2}\phi dx=0$. Define $\psi:\mathbb{R}\rightarrow\mathbb{R}$ to be a smooth, mean zero function with support in the ball of radius $1$ satisfying
\begin{equation}\label{eq:psi}
\frac{1}{2\pi}\int_{\mathbb{R}}\psi^2(x_3)dx_3=1.
\end{equation}
For parameters $r_\perp, r_\|>0$ such that
\begin{equation*}r_\perp\ll r_\|\ll1,\end{equation*}
we define the rescaled cut-off functions
\begin{equation*}\phi_{r_\perp}(x_1,x_2)=\frac{1}{r_\perp}\phi\left(\frac{x_1}{r_\perp},\frac{x_2}{r_\perp}\right),\quad
\Phi_{r_\perp}(x_1,x_2)=\frac{1}{r_\perp}\Phi\left(\frac{x_1}{r_\perp},\frac{x_2}{r_\perp}\right),\quad \psi_{r_\|}(x_3)=\frac{1}{r_\|^{1/2}}\psi\left(\frac{x_3}{r_\|}\right).\end{equation*}
We periodize $\phi_{r_\perp}, \Phi_{r_\perp}$ and $\psi_{r_\|}$ so that they are viewed as periodic functions on $\mathbb{T}^2, \mathbb{T}^2$ and $\mathbb{T}$ respectively.

Consider a large real number $\lambda$ such that $\lambda r_\perp\in\mathbb{N}$, and a large time oscillation parameter $\mu>0$. For every $\xi\in \Lambda$ we introduce
\begin{equation*}\aligned
\psi_{(\xi)}(t,x)&:=\psi_{\xi,r_\perp,r_\|,\lambda,\mu}(t,x):=\psi_{r_{\|}}(n_*r_\perp\lambda(x\cdot \xi+\mu t))
\\ \Phi_{(\xi)}(x)&:=\Phi_{\xi,r_\perp,\lambda}(x):=\Phi_{r_{\perp}}(n_*r_\perp\lambda(x-\alpha_\xi)\cdot A_\xi, n_*r_\perp\lambda(x-\alpha_\xi)\cdot(\xi\times A_\xi))\\
\phi_{(\xi)}(x)&:=\phi_{\xi,r_\perp,\lambda}(x):=\phi_{r_{\perp}}(n_*r_\perp\lambda(x-\alpha_\xi)\cdot A_\xi, n_*r_\perp\lambda(x-\alpha_\xi)\cdot(\xi\times A_\xi)),
\endaligned\end{equation*}
 where $\alpha_\xi\in\mathbb{R}^3$ are shifts to ensure that $\{\Phi_{(\xi)}\}_{\xi\in\Lambda}$ have mutually disjoint support.

The intermittent jets $W_{(\xi)}:\mathbb{T}^3\times \mathbb{R}\rightarrow\mathbb{R}^3$ are defined as in \cite[Section 7.4]{BV19}.
\begin{equation}\label{intermittent}W_{(\xi)}(t,x):=W_{\xi,r_\perp,r_\|,\lambda,\mu}(t,x):=\xi\psi_{(\xi)}(t,x)\phi_{(\xi)}(x).\end{equation}
By the choice of $\alpha_\xi$ we have that
\begin{equation}\label{Wxi}
W_{(\xi)}\otimes W_{(\xi')}\equiv0, \textrm{ for } \xi\neq \xi'\in\Lambda,
\end{equation}
and by the normalizations  \eqref{eq:psi} we obtain
$$
\frac1{(2\pi)^3}\int_{\mathbb{T}^3}W_{(\xi)}(t,x)\otimes W_{(\xi)}(t,x)dx=\xi\otimes\xi.
$$
These facts combined with Lemma \ref{geometric} imply that
\begin{equation}\label{geometric equality}
\sum_{\xi\in\Lambda}\gamma_\xi^2(R)\frac1{(2\pi)^3}\int_{\mathbb{T}^3}W_{(\xi)}(t,x)\otimes W_{(\xi)}(t,x)dx=R,
\end{equation}
for every symmetric matrix $R$ satisfying $|R-\textrm{Id}|\leq 1/2$. Since $W_{(\xi)}$ are not divergence free, we  introduce the corrector term
\begin{equation}\label{corrector}
W_{(\xi)}^{(c)}:=\frac{1}{n_*^2\lambda^2}\nabla \psi_{(\xi)}\times \textrm{curl}(\Phi_{(\xi)}\xi)
=\textrm{curl\,curl\,} V_{(\xi)}-W_{(\xi)}.
\end{equation}
with
\begin{equation*}
V_{(\xi)}(t,x):=\frac{1}{n_*^2\lambda^2}\xi\psi_{(\xi)}(t,x)\Phi_{(\xi)}(x).
\end{equation*}
Thus we have
\begin{equation*}
\div\left(W_{(\xi)}+W_{(\xi)}^{(c)}\right)\equiv0.
\end{equation*}

Next, we  recall the key   bounds from \cite[Section 7.4]{BV19}. For $N, M\geq0$ and $p\in [1,\infty]$ the following holds provided $r_{\|}^{-1}\ll r_{\perp}^{-1}\ll \lambda$
\begin{equation}\label{bounds}\aligned
\|\nabla^N\partial_t^M\psi_{(\xi)}\|_{C_tL^p}&\lesssim r^{1/p-1/2}_\|\left(\frac{r_\perp\lambda}{r_\|}\right)^N
\left(\frac{r_\perp\lambda \mu}{r_\|}\right)^M,\\
\|\nabla^N\phi_{(\xi)}\|_{L^p}+\|\nabla^N\Phi_{(\xi)}\|_{L^p}&\lesssim r^{2/p-1}_\perp\lambda^N,\\
\|\nabla^N\partial_t^MW_{(\xi)}\|_{C_tL^p}+\frac{r_\|}{r_\perp}\|\nabla^N\partial_t^MW_{(\xi)}^{(c)}\|_{C_tL^p}+\lambda^2\|\nabla^N\partial_t^MV_{(\xi)}
\|_{C_tL^p}&\lesssim r^{2/p-1}_\perp r^{1/p-1/2}_\|\lambda^N\left(\frac{r_\perp\lambda\mu}{r_\|}\right)^M,
\endaligned\end{equation}
where the implicit constants may depend on $p, N$ and $M$, but are independent of $\lambda, r_\perp, r_\|, \mu$.
\renewcommand{\theequation}{B.\arabic{equation}}

\section{Estimates for the heat operator}\label{s:sch}

To deal with the singularity at zero we introduce the following two norms: for $\alpha\in(0,1)$, $p\in[1,\infty]$, $\gamma\geq0$
$$
\|f\|_{C_{T,\gamma}^{\alpha}L^p}:=\sup_{0\leq t\leq T}t^{\gamma}\|f(t)\|_{L^{p}}+\sup_{0\leq s<t\leq T}s^\gamma \frac{\|f(t)-f(s)\|_{L^p}}{|t-s|^\alpha},
$$
$$
 \|f\|_{C_{T,\gamma}B^\alpha_{p,\infty}}:=\sup_{0\leq t\leq T}t^\gamma \|f(t)\|_{B^\alpha_{p,\infty}}.$$

First, we recall the basic estimates for the heat semigroup $P_t:=e^{t\Delta}$ from \cite[Lemma 2.8]{ZZZ20}. Let $T\geq1$.

\bl\label{lem:heat}
 For any $\theta>0$ and $\alpha\in\mR$, there is a constant $C=C(\alpha,\theta)>0$ such that for $p,q\in [1,\infty]$ and all $t\in (0,T]$,
		\begin{align}\label{E1}
		\|P_t f\|_{B^{\theta+\alpha}_{p,q}}\lesssim_C T^{\theta/2} t^{-\theta/2}\|f\|_{B^\alpha_{p,q}}.
		\end{align}
	 For any $0<\theta<2$, there  is a constant $C=C(\theta)>0$ such that for all $t\in [0,1]$,
		\begin{align}\label{E2}
		\|P_t f-f\|_{L^p}\lesssim_C  t^{\theta/2}\|f\|_{B^{\theta}_{p,\infty}}.
		\end{align}
	\el

Then we prove the following for $\mathcal{I}f=\int_0^\cdot P_{t-s}f\dif s$.

\begin{lemma}
  \label{l:sch2}Let $\alpha \in (0, 2)$, $\beta \in (\alpha - 2, 0)$,
  $\gamma, \delta \in [0, 1)$, $p \in [1, \infty]$ so that
  $$\gamma - \delta - \alpha / 2 + \beta / 2 + 1 >0.$$
  Then
  \[ \| \mathcal{I}f \|_{C_{T, \gamma}^{\alpha / 2} L^p} + \| \mathcal{I}f
     \|_{C_{T, \gamma} B^{\alpha}_{p, \infty}} \lesssim T^{\gamma - \delta +
     1} \| f \|_{C_{T, \delta} B^{\beta}_{p, \infty}} . \]
\end{lemma}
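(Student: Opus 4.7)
\textbf{Proof plan for Lemma~\ref{l:sch2}.}

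The plan is to estimate the two norms separately, using \eqref{E1} and \eqref{E2} from Lemma~\ref{lem:heat} as the basic building blocks, and carefully tracking the powers of $T$, $t$, $s$ and $t-s$ so that the hypothesis $\gamma-\delta-\alpha/2+\beta/2+1>0$ is exactly what is needed to absorb residual powers of $t$ (or $s$) into $T$.

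\emph{Spatial norm.} Writing $\mathcal{I}f(t)=\int_0^t P_{t-r}f(r)\,dr$ and applying \eqref{E1} with $\theta=\alpha-\beta>0$ yields
\[
\|P_{t-r}f(r)\|_{B^\alpha_{p,\infty}}\lesssim T^{(\alpha-\beta)/2}(t-r)^{-(\alpha-\beta)/2}\|f(r)\|_{B^\beta_{p,\infty}}.
\]
Inserting the bound $\|f(r)\|_{B^\beta_{p,\infty}}\leq r^{-\delta}\|f\|_{C_{T,\delta}B^\beta_{p,\infty}}$ and using that $(\alpha-\beta)/2<1$ (since $\beta>\alpha-2$) together with $\delta<1$, the Beta function gives
\[
\int_0^t(t-r)^{-(\alpha-\beta)/2}r^{-\delta}\,dr \simeq t^{1-(\alpha-\beta)/2-\delta}.
\]
Multiplying by $t^\gamma$, the resulting power of $t$ is $\gamma-\delta+1-(\alpha-\beta)/2$, which is positive by hypothesis, so it can be bounded by $T$ to the same power. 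Combined with $T^{(\alpha-\beta)/2}$ this gives the desired $T^{\gamma-\delta+1}$.

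\emph{H\"older norm.} For $0\leq s<t\leq T$, decompose
\[
\mathcal{I}f(t)-\mathcal{I}f(s)=\underbrace{\int_s^t P_{t-r}f(r)\,dr}_{=:I_1}+\underbrace{\int_0^s(P_{t-s}-\mathrm{Id})P_{s-r}f(r)\,dr}_{=:I_2}.
\]
For $I_1$, apply \eqref{E1} with $\theta=-\beta\in(0,2-\alpha)$ to map $B^\beta_{p,\infty}\to L^p$, obtaining $\|P_{t-r}f(r)\|_{L^p}\lesssim T^{-\beta/2}(t-r)^{\beta/2}\|f(r)\|_{B^\beta_{p,\infty}}$; using $r\geq s$ to estimate $r^{-\delta}\leq s^{-\delta}$ and integrating $(t-r)^{\beta/2}$ (which is integrable since $\beta>-2$) produces $(t-s)^{1+\beta/2}$. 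For $I_2$, first use \eqref{E2} with exponent $\alpha$ to get $\|(P_{t-s}-\mathrm{Id})g\|_{L^p}\lesssim (t-s)^{\alpha/2}\|g\|_{B^\alpha_{p,\infty}}$, and then apply \eqref{E1} to $g=P_{s-r}f(r)$ to obtain a factor $T^{(\alpha-\beta)/2}(s-r)^{-(\alpha-\beta)/2}$. Integrating the resulting kernel as in the spatial estimate yields a factor $s^{1-(\alpha-\beta)/2-\delta}$.

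\emph{Collecting.} Dividing by $(t-s)^{\alpha/2}$ and multiplying by $s^\gamma$, the $I_2$-contribution carries $T^{(\alpha-\beta)/2}s^{\gamma-\delta+1-(\alpha-\beta)/2}$, and the positivity assumption lets us absorb the $s$-power into $T^{\gamma-\delta+1-(\alpha-\beta)/2}$, giving total $T^{\gamma-\delta+1}$. The $I_1$-contribution carries $T^{-\beta/2}s^{\gamma-\delta}(t-s)^{1+\beta/2-\alpha/2}$; since $1+\beta/2-\alpha/2>0$ the factor $(t-s)^{1+\beta/2-\alpha/2}$ is bounded by $T$ to that power, and the $s$-factor is bounded by distinguishing $\gamma\geq\delta$ (use $s\leq T$) from $\gamma<\delta$ (use $s\geq t-s$ or split into the two sub-cases $t-s\leq s$ and $t-s>s$ to absorb $s^{\gamma-\delta}$ into $(t-s)^{\gamma-\delta}$ up to a constant), producing a bound by $T^{\gamma-\delta+1-\alpha/2}\leq T^{\gamma-\delta+1}$ (using $T\geq 1$, as in Lemma~\ref{lem:heat}).

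\emph{Main obstacle.} The only delicate step is the H\"older bound for $I_1$ when $\gamma<\delta$: the prefactor $s^{\gamma-\delta}$ alone is singular as $s\to 0$, so one must exploit the positive power $(t-s)^{1+\beta/2-\alpha/2}$ to compensate, which is exactly where the hypothesis $\gamma-\delta-\alpha/2+\beta/2+1>0$ is used. Once this is handled by splitting into $t-s\leq s$ and $t-s>s$, the remaining bookkeeping of $T$-powers is routine and the stated estimate follows.
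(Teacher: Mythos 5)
Your overall architecture coincides with the paper's: the same bound for the spatial norm via \eqref{E1} and the Beta integral, and the same decomposition $\mathcal{I}f(t)-\mathcal{I}f(s)=\int_s^t P_{t-r}f(r)\,\mathd r+(P_{t-s}-\mathrm{Id})\mathcal{I}f(s)$ for the H\"older seminorm, with \eqref{E2} applied to the second piece and the spatial estimate recycled there. Two points need attention; the second is a genuine gap. First, a minor one: taking $\theta=-\beta$ in \eqref{E1} lands you in $B^0_{p,\infty}$, which does not embed into $L^p$. You should take $\theta=-\beta+\kappa$ for small $\kappa>0$ and land in $B^{\kappa}_{p,\infty}\subset L^p$; the strict inequality in the hypothesis leaves room for this, and it is what the paper does.

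Second, and more seriously: in the $I_1$ estimate you replace $r^{-\delta}$ by $s^{-\delta}$ on $[s,t]$ and arrive at the prefactor $s^{\gamma-\delta}(t-s)^{1+\beta/2-\alpha/2}$. When $\gamma<\delta$ your proposed fix is to absorb $s^{\gamma-\delta}$ into $(t-s)^{\gamma-\delta}$ after splitting into $t-s\leq s$ and $t-s>s$. That absorption uses $s^{\gamma-\delta}\leq(t-s)^{\gamma-\delta}$, which (because $\gamma-\delta<0$) requires $s\geq t-s$; in the complementary sub-case $t-s>s$ the inequality goes the wrong way, and the quantity $s^{\gamma-\delta}(t-s)^{1+\beta/2-\alpha/2}$ is genuinely unbounded as $s\to0$ with $t$ fixed. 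The defect is not in the claim but in the crude bound $r^{-\delta}\leq s^{-\delta}$, which is too lossy in that regime; you must keep the full integral. The paper's remedy is the substitution $r=s+x(t-s)$ together with the factorization $r^{-\delta}=r^{\gamma-\delta}r^{-\gamma}$, bounding $s^{\gamma}r^{-\gamma}\leq1$ and $r^{\gamma-\delta}\leq(x(t-s))^{\gamma-\delta}$, which yields the convergent integral $\int_0^1(1-x)^{-(\kappa-\beta)/2}x^{\gamma-\delta}\,\mathd x$ (finite since $\gamma-\delta>-1$) times $(t-s)^{1+\beta/2-\kappa/2+\gamma-\delta}$, after which the hypothesis gives the required $(t-s)^{\alpha/2}$. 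Equivalently, in the sub-case $t-s>s$ you could evaluate $\int_0^t(t-r)^{\beta/2}r^{-\delta}\,\mathd r\simeq t^{1+\beta/2-\delta}$ and use $s\leq t\leq 2(t-s)$. Either way, the missing regime must be closed by a sharper evaluation of the integral rather than the stated absorption; with that repaired, your argument matches the paper's.
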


\begin{proof}
  By Lemma \ref{lem:heat}, we have for $0 \leqslant t \leqslant T$
  \[ t^{\gamma} \| \mathcal{I}f (t) \|_{B^{\alpha}_{p, \infty}} \lesssim
     T^{(\alpha - \beta) / 2} t^{\gamma} \int_0^t (t - s)^{- (\alpha - \beta)
     / 2} s^{- \delta} \mathd s \| f \|_{C_{T, \delta} B^{\beta}_{p, \infty}},
  \]
  where
  \[ T^{(\alpha - \beta) / 2} t^{\gamma} \int_0^t (t - s)^{- (\alpha - \beta)
     / 2} s^{- \delta} \mathd s \]
  \[ = T^{(\alpha - \beta) / 2} t^{\gamma} \int_0^{t / 2} (t - s)^{- (\alpha -
     \beta) / 2} s^{- \delta} \mathd s + T^{(\alpha - \beta) / 2} t^{\gamma}
     \int_{t / 2}^t (t - s)^{- (\alpha - \beta) / 2} s^{- \delta} \mathd s \]
  \[ \lesssim T^{(\alpha - \beta) / 2} t^{\gamma - (\alpha - \beta) / 2}
     \int_0^{t / 2} s^{- \delta} \mathd s + T^{(\alpha - \beta) / 2} t^{\gamma
     - \delta} \int_{t / 2}^t (t - s)^{- (\alpha - \beta) / 2} \mathd s \]
  \[ \lesssim T^{(\alpha - \beta) / 2} t^{\gamma - \delta - (\alpha - \beta) /
     2 + 1} \leqslant T^{\gamma - \delta + 1}, \]
  provided $\gamma - \delta - (\alpha - \beta) / 2 + 1 \geqslant 0$.

   To bound the other norm, we use Lemma \ref{lem:heat} again to get for $0
  \leqslant s \leqslant t \leqslant T$, $| t - s | \leqslant 1, \kappa>0$ small enough
  \[ s^{\gamma} \| \mathcal{I}f (t) -\mathcal{I}f (s) \|_{L^p} \lesssim
     s^{\gamma} \int_s^t \| P_{t - r} f (r) \|_{B^{\kappa}_{p, \infty}} \mathd
     r + s^{\gamma} \| (P_{t - s} - I) \mathcal{I}f (s) \|_{L^p} \]
  \[ \lesssim s^{\gamma} \int_s^t (t - r)^{-
     (\kappa - \beta) / 2} r^{- \delta} \mathd r \| f \|_{C_{T, \delta}
     B^{\beta}_{p, \infty}} + s^{\gamma} (t - s)^{\alpha / 2} \| \mathcal{I}f
     (s) \|_{B^{\alpha}_{p, \infty}} . \]

  For the first term, we write $r = s + x (t - s)$ and obtain if $\gamma
  \geqslant \delta$
  \[ s^{\gamma} \int_s^t (t - r)^{- (\kappa -
     \beta) / 2} r^{- \delta} \mathd r \leqslant s^{\gamma - \delta}  \int_s^t (t - r)^{- (\kappa - \beta) / 2}
     \mathd r \]
  \[ = s^{\gamma - \delta} (t - s)^{1-\kappa/2+\beta/2} \int_0^1 (1 - x)^{- (\kappa - \beta) / 2}
     \mathd x \lesssim T^{\gamma - \delta} (t - s)^{\alpha / 2}   \]
  since $\beta/ 2 + 1 > \alpha/2$; whereas if $\gamma < \delta$ we get
  \[ s^{\gamma}  \int_s^t (t - r)^{- (\kappa-
     \beta) / 2} r^{- \delta} \mathd r  \]
  \[ = s^{\gamma} (t - s)^{1-\kappa/2+\beta/2} \int_0^1 (1 - x)^{- (\kappa- \beta) / 2} (s + x (t
     - s))^{- \delta + \gamma} (s + x (t - s))^{- \gamma} \mathd x \]
  \[ \leqslant (t - s)^{1-\kappa/2+\beta/2} \int_0^1 (1 - x)^{- (\kappa - \beta) / 2} (s + x (t -
     s))^{- \delta + \gamma} \mathd x \]
  \[ \leqslant (t - s)^{1-\kappa/2+\beta/2 - \delta + \gamma} \int_0^1 (1 - x)^{- (\kappa -
     \beta) / 2} x^{- \delta + \gamma} \mathd x\] 
  \[ \lesssim (t - s)^{\alpha / 2}  ,
  \]
  provided $- \alpha / 2 + 1 - \delta + \gamma +\beta/2> 0$.

  For the second term we use the previous estimate. Therefore, for $|t-s|\leq1$
  \[ s^{\gamma} \| \mathcal{I}f (t) -\mathcal{I}f (s) \|_{L^p} \lesssim (t -
     s)^{\alpha / 2} {T^{ 1 - \delta + \gamma}}  \| f \|_{C_{T,
     \delta} B^{\alpha}_{p, \infty}} . \]
  Since $B^{\alpha}_{p, \infty} \subset L^p$, we estimate the remaining term
  in the H{\"o}lder norm in time using the first estimate above
  \[ t^{\gamma} \| \mathcal{I}f (t) \|_{L^p} \lesssim t^{\gamma} \|
     \mathcal{I}f (t) \|_{B^{\alpha}_{p, \infty}} \lesssim T^{\gamma - \delta
     + 1} \| f \|_{C_{T,
     \delta} B^{\beta}_{p, \infty}}\]
  and the proof is complete.
\end{proof}

\begin{remark}\label{r:b.5}
 We observe that in Section~\ref{s:v1vs}, we need to use Lemma~\ref{l:sch2} for various combinations of $\gamma,\delta\in \{0,1/6,3/10\}$. As a consequence, the power of $T$ in the statement of Lemma~\ref{l:sch2} is always bounded by  2. Since due to the definition of the stopping time \eqref{stopping time3} we have $T\leq L^{1/2}$ we obtain a factor $L$ for any application of the Schauder estimate.
\end{remark}

\bl\label{commutator}

Let $\alpha\in(0,1)$, $p\in[1,\infty]$, $\gamma\in (0,1)$, $\beta\in\mathbb{R}$. Then for any $\kappa>0$, $T\geq 1$
$$\|[\mathcal{I},f\prec]g\|_{C_{T,\gamma}^{\alpha/4}B_{p,\infty}^{\alpha/2+\beta+2-\kappa}}+ \|[\mathcal{I},f\prec]g\|_{C_{T,\gamma} B_{p,\infty}^{\alpha+\beta+2-\kappa}}\lesssim T^{1+\alpha/2}(\|f\|_{C_{T,\gamma}^{\alpha/2}L^p}+\|f\|_{C_{T,\gamma} B_{p,\infty}^{\alpha}})\|g\|_{C_TC^\beta}.$$

\el

\begin{proof} We have $$[\mathcal{I},f\prec]g(t)=\int_0^te^{(t-s)\Delta}[(f(s)-f(t))\prec g(s)]\dif s+\int_0^t[e^{(t-s)\Delta}, f(t)\prec] g(s)\dif s=I_1(t)+I_2(t).$$
	Then by \eqref{E1} we have
	\begin{align*}
	t^\gamma \|I_1(t)\|_{B_{p,\infty}^{\alpha+\beta+2-\kappa}}
	&\lesssim t^\gamma T^{1-\kappa/2+\alpha/2}\int_0^t(t-s)^{-1+\kappa/2}s^{-\gamma}\dif s\|f\|_{C_{T,\gamma}^{\alpha/2}L^p}\|g\|_{C_TC^\beta}\\
	&\lesssim T^{1+\alpha/2}\|f\|_{C_{T,\gamma}^{\alpha/2}L^p}\|g\|_{C_TC^\beta},\end{align*}
	and by \cite[Lemma A.1]{CC15}  (see also \cite{MW18})
	\begin{align*}&t^\gamma \|I_2(t)\|_{B_{p,\infty}^{\alpha+\beta+2-\kappa}}
	\lesssim t^\gamma\int_0^t(t-s)^{-1+\kappa/2}\dif s\|f(t)\|_{B_{p,
			\infty}^{\alpha}}\|g\|_{C_TC^\beta}
	\lesssim T^\kappa\|f\|_{C_{T,\gamma}B_{p,
			\infty}^{\alpha}}\|g\|_{C_TC^\beta}.\end{align*}
	Moreover, for $0\leq t_1<t_2\leq T$ satisfying $|t_2-t_1|\leq 1$
	\begin{align*}I_1(t_2)-I_1(t_1)
	&=\int_0^{t_1}\Big(e^{(t_2-s)\Delta}[(f(s)-f(t_2))\prec g(s)]-e^{(t_1-s)\Delta}[(f(s)-f(t_1))\prec g(s)]\Big)\dif s\\
	&\qquad+\int_{t_1}^{t_2}e^{(t_2-s)\Delta}[(f(s)-f(t_2))\prec g(s)]\dif s,
	\end{align*}
	which by Lemma \ref{lem:heat} implies that
\begin{align*}
	&t_1^\gamma \|I_1(t_2)-I_1(t_1)\|_{B_{p,\infty}^{\alpha/2+\beta+2-\kappa}}
		\\&\lesssim  T^{1-\kappa/2+\alpha/2}\|f\|_{C_{T,\gamma}^{\alpha/2}L^p}\|g\|_{C_TC^\beta}\\
		&\qquad\qquad \times t_1^\gamma\Big(|t_1-t_2|^{\alpha/4}\int_0^{t_1}(t_1-s)^{-1+\kappa/2}s^{-\gamma}\dif s+\int_{t_1}^{t_2}(t_2-s)^{-1+\alpha/4+\kappa/2}s^{-\gamma}\dif s\Big)
		\\&\lesssim T^{1+\alpha/2}\|f\|_{C_{T,\gamma}^{\alpha/2}L^p}\|g\|_{C_TC^\beta}|t_1-t_2|^{\alpha/4}.\end{align*}
	Also, it holds
\begin{align*}&I_2(t_2)-I_2(t_1)
		\\&=\int_0^{t_1}[e^{(t_2-s)\Delta}-e^{(t_1-s)\Delta}, f(t_1)\prec] g(s)\dif s+\int_0^{t_1}[e^{(t_2-s)\Delta}, (f(t_2)-f(t_1))\prec] g(s)\dif s
		\\&\quad+\int_{t_1}^{t_2}[e^{(t_2-s)\Delta}, f(t_2)\prec] g(s)\dif s=J_1+J_2+J_3.
		\end{align*}
		Then we obtain by \cite[Lemma A.1]{CC15}  (see also \cite{MW18})
		\begin{align*}&t_1^\gamma \|J_2+J_3\|_{B_{p,\infty}^{\alpha/2+\beta+2-\kappa}}
		\\&\lesssim  (\|f\|_{C_{T,\gamma}^{\alpha/4}B^{\alpha/2}_{p,\infty}}+\|f\|_{C_{T,\gamma}B^{\alpha}_{p,\infty}})\|g\|_{C_TC^\beta}\\
		&\qquad\qquad\times \Big(|t_1-t_2|^{\alpha/4}\int_0^{t_1}(t_1-s)^{-1+\kappa/2}\dif s+\int_{t_1}^{t_2}(t_2-s)^{-1+\alpha/4+\kappa}\dif s\Big)
		\\&\lesssim (\|f\|_{C_{T,\gamma}^{\alpha/2}L^p}+\|f\|_{C_{T,\gamma}B^{\alpha}_{p,\infty}})\|g\|_{C_TC^\beta}|t_1-t_2|^{\alpha/4}T^\kappa.
		\end{align*}
	Since $D^m ((1-e^{-2^{2j} (t_2-t_1)|\xi|^2})e^{-2^{2j}(t_1-s)|\xi|^2})\lesssim (|t_2-t_1| 2^{2j})^\delta e^{-c2^{2j}(t_1-s)}$ for $m\in \mathbb{N}_0$, $\delta\in(0,1)$ and $\xi$ in an annulus, by similar argument as \cite[Lemma A.1]{CC15} (see also \cite{MW18}) we obtain
	$$\|[e^{(t_2-s)\Delta}-e^{(t_1-s)\Delta}, f(t_1)\prec] g(s)\|_{B_{p,\infty}^{\alpha/2+\beta+2-\kappa}}\lesssim |t_2-t_1|^{\alpha/4}\|f(t_1)\|_{B_{p,\infty}^{\alpha}}(t_1-s)^{-1+\kappa/2}\|g(s)\|_{C^\beta}.$$
This implies
	\begin{align*}
	t_1^\gamma \|J_1\|_{B_{p,\infty}^{\alpha/2+\beta+2-\kappa}}
	&\lesssim  \|f\|_{C_{T,\gamma}B_{p,\infty}^{\alpha}}\|g\|_{C_TC^\beta}|t_1-t_2|^{\alpha/4}\int_0^{t_1}(t_1-r)^{-1+\kappa/2}\dif r
	\\&\lesssim \|f\|_{C_{T,\gamma}B_{p,\infty}^{\alpha}}\|g\|_{C_TC^\beta}|t_1-t_2|^{\alpha/4}T^\kappa
	\end{align*}
	and the proof is complete.
\end{proof}

\def\cprime{$'$} \def\ocirc#1{\ifmmode\setbox0=\hbox{$#1$}\dimen0=\ht0
	\advance\dimen0 by1pt\rlap{\hbox to\wd0{\hss\raise\dimen0
			\hbox{\hskip.2em$\scriptscriptstyle\circ$}\hss}}#1\else {\accent"17 #1}\fi}

\end{document}